\pdfoutput=1
\documentclass[11pt,reqno]{amsart}
\usepackage{rotating}
\usepackage{bbm}
\usepackage{mathrsfs}
\usepackage{stmaryrd}
\usepackage{amscd,latexsym,amsthm,amsfonts,amssymb,amsmath,amsxtra}

\usepackage[hyperfootnotes=false, colorlinks, citecolor=RoyalBlue, linkcolor=Bittersweet, urlcolor=link]{hyperref}
\usepackage{color}
\usepackage[usenames,dvipsnames]{xcolor}
\usepackage{mathpazo}
\usepackage[mathcal]{eucal}
\usepackage{graphicx}
\usepackage[all]{xy}

\usepackage{setspace}
\usepackage[new]{old-arrows}

\usepackage{tikz}
\usetikzlibrary{calc, intersections}
\usepackage{tikz-cd}

\newcommand{\FL}{{\mathbf{L}}}
\newcommand{\FH}{{\mathbf{H}}} \newcommand{\FG}{{\mathbf{G}}}

\newcommand{\FP}{{\mathbf{P}}}\newcommand{\FQ}{{\mathbf{Q}}}
\newcommand{\FM}{{\mathbf{M}}}
\newcommand{\FN}{{\mathbf{N}}}

\newcommand{\FX}{{\mathbf{X}}}
\newcommand{\FY}{{\mathbf{Y}}}
\newcommand{\FW}{{\mathbf{W}}}\newcommand{\FV}{{\mathbf{V}}}

\newcommand{\Fq}{{\mathbf {q}}}

\newcommand{\BA}{{\mathbb {A}}} 
\newcommand{\BC}{{\mathbb {C}}}

 \newcommand{\BN}{{\mathbb {N}}}

\newcommand{\BW}{{\mathbb {W}}} \newcommand{\BX}{{\mathbb {X}}}
\newcommand{\BY}{{\mathbb {Y}}} \newcommand{\BZ}{{\mathbb {Z}}}

\newcommand{\CC}{{\mathcal {C}}} 
 \newcommand{\CF}{{\mathcal {F}}}
 
\newcommand{\CI}{{\mathcal {I}}} 
 
\newcommand{\CM}{{\mathcal {M}}} 
\newcommand{\CO}{{\mathcal {O}}} \newcommand{\CP}{{\mathcal {P}}}
 
\newcommand{\CS}{{\mathcal {S}}} 
 \newcommand{\CV}{{\mathcal {V}}}
\newcommand{\CW}{{\mathcal {W}}} \newcommand{\CX}{{\mathcal {X}}}

 \newcommand{\RH}{{\mathrm {H}}}

\newcommand{\an}{{\mathrm{an}}}

\newcommand{\bc}{{\mathrm{bc}}}

\newcommand{\rc}{{\mathrm{c}}}

\newcommand{\disc}{{\mathrm{disc}}}
\newcommand{\diag}{{\mathrm{diag}}}\renewcommand{\d}{{\mathrm{d}}}

\newcommand{\End}{{\mathrm{End}}} 
 \newcommand{\Ext}{\mathrm{Ext}}

\newcommand{\Gal}{{\mathrm{Gal}}} 
\newcommand{\GSp}{{\mathrm{GSp}}} \newcommand{\PGSp}{{\mathrm{PGSp}}}
\newcommand{\GL}{{\mathrm{GL}}}
\newcommand{\Hom}{{\mathrm{Hom}}}

\newcommand{\id}{{\mathrm{id}}}
\newcommand{\Ind}{{\mathrm{Ind}}} 
\newcommand{\Irr}{{\mathrm{Irr}}}

\newcommand{\M}{{\mathrm{M}}}

\newcommand{\Mp}{{\mathrm{Mp}}}  
 
 \newcommand{\N}{{\mathrm{N}}}

\renewcommand{\O}{{\mathrm{O}}}
 \newcommand{\ONB}{{\mathrm{ONB}}}

\renewcommand{\Re}{{\mathrm{Re}}}
\newcommand{\Rep}{{\mathrm{Rep}}} \newcommand{\rat}{{\mathrm{ra}}}

 \newcommand{\sm}{{\mathrm{sm}}}

\newcommand{\SL}{{\mathrm{SL}}}
 
\newcommand{\SO}{{\mathrm{SO}}}\newcommand{\Sp}{{\mathrm{Sp}}}

\newcommand{\Stab}{{\mathrm{Stab}}}

    \newcommand{\temp}{{\mathrm{temp}}}

\newcommand{\tr}{{\mathrm{tr}}}

\newcommand{\R}{{\mathrm{R}}}

\newcommand{\vol}{{\mathrm{vol}}}

\newcommand{\wt}{\widetilde}

\newcommand{\pair}[1]{\langle {#1} \rangle}

\newcommand{\norm}[1]{\|{#1}\|}

\newcommand{\incl}{\hookrightarrow}

\newcommand{\sk}{\medskip}
\newcommand{\lra}{\longrightarrow}
\newcommand{\ra}{\rightarrow} 

\newcommand{\bs}{\backslash}
\newcommand{\lhra}{\longhookrightarrow}

\newcommand{\s}{\sk\noindent}

\theoremstyle{plain}
\newtheorem{thm}{Theorem}[section] \newtheorem{cor}[thm]{Corollary}
\newtheorem{lem}[thm]{Lemma}  \newtheorem{prop}[thm]{Proposition}

\theoremstyle{definition}
 \newtheorem{defn}[thm]{Definition}
\newtheorem{rem}[thm]{Remark} 
\newtheorem{eg}[thm]{Example}

\numberwithin{equation}{section}

\title{Local theta correspondence and Galois periods}
\author{Chong Zhang}
\begin{document}
\date{}
\maketitle

\begin{abstract}
We study the behavior of Galois periods under the local theta correspondence for even orthogonal and symplectic groups. Specifically, we compare their multiplicities and construct explicit transfer maps. Furthermore, we establish both an adjoint relation and a relative character relation for these periods.
\end{abstract}

\tableofcontents

\section{Introduction}

\subsection{Overview}
The local aspect of the relative Langlands program, systematically initiated by Sakellaridis and Venkatesh \cite{sv} and further developed by Ben-Zvi, Sakellaridis and Venkatesh \cite{bzsv}, concerns the properties of local periods. One powerful tool for studying relations of certain periods between members of a reductive dual pair is the local theta correspondence, a method exposited comprehensively in \cite{gan19, gan25}. Among various periods, the Galois period is of particular importance. For this period, Prasad \cite{pra} formulated a  concrete conjecture relating its multiplicity to the $L$-parameter. Prasad's conjecture has been explored in a series of works by Lu for classical groups of small rank (e.g., \cite{lu20a, lu20b}). Building on these developments, this paper investigates the behavior of Galois periods under the local theta correspondence between even orthogonal and symplectic groups.

\subsection{Main results}
Let $F$ be a non-archimedean local field of characteristic 0, and $E$ a quadratic field extension of $F$. Denote by $\Gal(E/F)$ the Galois group of this extension, and by $\iota$ its non-trivial element . Fix an element $\tau\in E^\times$ such that $\tr_{E/F}(\tau)=0$, where $\tr_{E/F}: E\ra F$ denotes the trace map. Let $\psi_F$ be a non-trivial additive character of $F$. We define the additive character $\psi$ of $E$ by 
$\psi:=\psi_F\circ\tr_{E/F}$, and its $\tau$-twist by $\psi_\tau(x):=\psi(\tau x)$ for $x\in E$. 

Let $G$ be a reductive group over $F$. The automorphism $\iota$ then acts naturally as an involution on $G(E)$, with fixed-point subgroup $G(F)$. For a smooth representation $\pi$ of $G(E)$, the space $\Hom_{G(F)}(\pi,\BC)$ of the $G(F)$-invariant linear forms is called the space of (local) {\em Galois periods}. We call $\dim\Hom_{G(F)}(\pi,\BC)$ the {\em multiplicity} of $\pi$. By a general theorem of Delorme \cite{del}, if $\pi$ is irreducible, then it is of finite multiplicity, though not necessarily of multiplicity one.  For Galois periods
$\alpha$ and $\alpha'$ of $\Hom_{G(F)}(\pi,\BC)$, when $\pi$ is irreducible, one can define the relative character $\Phi_{\pi,\alpha,\alpha'}$ (see Sect. \ref{subsec rel-char} for the precise definition). 

Let $V_F$ be a quadratic space of dimension $2m$, and $W_F$ a symplectic space of dimension $2n$ over $F$. Denote by $V$ and $W$ the base change of $V_F$ and $W_F$ to $E$, and by $\O(V_F)$, 
$\O(V)$, $\Sp(W_F)$ and $\Sp(W)$ the corresponding isometry groups. Then $\O(V)$ and $\Sp(W)$ form a reductive dual pair.  Given an irreducible admissible representation $\pi$ of $\Sp(W)$, let $\Theta_{V,W,\psi_\tau}(\pi)$ denote the (big) theta lift of $\pi$ to $\O(V)$; it is a smooth representation of $\O(V)$ of finite length. 

To state our results in a simpler form, we focus on the direction from $\Sp(W)$ to $\O(V)$ in the local theta correspondence and, for simplicity, assume that $\pi$ is supercuspidal and $\Theta_{V,W,\psi_\tau}(\pi)$ is the first occurrence. Under this assumption, a fundamental result of Kudla \cite{kud86} asserts that $$\sigma:=\Theta_{V,W,\psi_\tau}(\pi)$$ is irreducible and supercuspidal. We also note that in this setting, $\pi$ is unitary and its contragredient is isomorphic to its complex conjugate.

Our first result compares the multiplicities of $\pi$ and $\sigma$.

\begin{thm}[Theorem \ref{thm symp-orth}]
If $m>n$, then $\dim \Hom_{\Sp(W_F)}(\pi,\BC)=\dim\Hom_{\O(V_F)}(\sigma,\BC)$.
\end{thm}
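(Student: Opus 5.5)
We will use the seesaw identity attached to the seesaw pair
\[
\bigl(\O(V),\ \Sp(W_F)\bigr)\ \text{ and }\ \bigl(\O(V_F)\times \O(V_F),\ \Sp(W)\bigr),
\]
or more precisely its Galois variant. Restrict the Weil representation $\omega_{\psi_\tau}$ of $\Sp(W)\times\O(V)$ (realized on $\CS(V\otimes_E Y)$ for a Lagrangian $Y$) to $\Sp(W_F)\times\O(V_F)$. We view $V\otimes_E W$, as an $F$-symplectic space with form $\tr_{E/F}(\tau\,\cdot)$, as $(V_F\otimes W_F)\otimes_F E$.

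The first key step is that the choice $\tr(\tau)=0$ makes the restriction of scalars of $V\otimes_E W$ decompose. Indeed, writing $E=F\oplus F\tau$, the $F$-symplectic space $\mathrm{Res}_{E/F}(V\otimes W)$ is isomorphic to $V_F\otimes_F W'$. Here $W'=\mathrm{Res}_{E/F}W$ carries the form $\tr(\tau\langle\,,\rangle)$, and $W_F\subset W'$ is a Lagrangian, since $\tr(\tau a)=0$ for $a\in F$. So $\omega_{\psi_\tau}|_{\Sp(W_F)\times\O(V_F)}$ is the Weil representation of the dual pair $\O(V_F)\times\Sp(W')$, restricted to the Siegel Levi $\GL(W_F)\supset\Sp(W_F)$. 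In the mixed model it is $\CS(V_F\otimes W_F)$ with $\Sp(W_F)\subset \GL(W_F)$ acting linearly, up to the character $|\det|^{m}$, which is trivial on $\Sp$. The same picture with roles swapped gives the $\O(V_F)$ side: $\mathrm{Res}(V)\otimes W_F$ with $V_F$ a Lagrangian-type subspace of the split quadratic space $\mathrm{Res}V$.

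Next, compute both sides of the Frobenius-type identity
\[
\Hom_{\Sp(W_F)}(\pi,\BC)\cong \Hom_{\Sp(W)\times \O(V_F)}(\omega_{\psi_\tau},\pi\otimes \BC)
\]
and the analogous identity with the roles of $\Sp$ and $\O$ exchanged. The intermediate object is
\[
\Hom_{\Sp(W_F)\times \O(V_F)}(\omega_{\psi_\tau}\otimes\pi^\vee,\BC).
\]
We evaluate it in two ways. First, the $\Sp(W)$-coinvariants of $\omega\otimes\pi^\vee$ give $\Theta(\pi^\vee)$ (up to contragredient/conjugation, using that $\pi$ is unitary and the relevant conjugations). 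Supercuspidality and first occurrence give $\Theta=\sigma$ irreducible, and supercuspidality also gives splitting/projectivity, so no Ext terms intervene. This yields $\Hom_{\O(V_F)}(\sigma,\BC)$. Second, integrate over $\O(V_F)$ first. The $\O(V_F)$-coinvariants of $\CS(V\otimes Y)$ should be computed by an orbit analysis of $\O(V_F)$ on $\mathrm{Hom}(Y,V)$. This is where the decomposition above is used: one wants the coinvariants to be filtered by pieces induced from parabolics of $\Sp(W)$ with stabilizers built from $\Sp(W_F)$. The generic open piece is $\mathrm{ind}_{\Sp(W_F)}^{\Sp(W)}\BC$, coming from the theta lift of the trivial representation of $\O(V_F)$, which for $m>n$ lies in the stable range and is a degenerate principal series.

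The main obstacle is this second computation. We must show that the boundary strata of the filtration contribute nothing against the supercuspidal $\pi^\vee$, which we expect because they are induced from proper parabolics, so $\Hom$ into a supercuspidal vanishes. We must also show that no Ext terms survive. Supercuspidality makes $\pi$ projective and injective in the relevant category, so exactness holds. The hypothesis $m>n$ is needed so that the open orbit, corresponding to injective maps $Y\to V$, is genuinely the one producing $\Sp(W_F)$-invariants, and so that the relevant Rallis inner-product/doubling integral converges. Granting these, both evaluations agree, which gives $\dim\Hom_{\Sp(W_F)}(\pi,\BC)=\dim\Hom_{\O(V_F)}(\sigma,\BC)$.
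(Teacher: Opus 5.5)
Your setup is fine: taking $W'=\R_{E/F}(W)$ with form $\tr_{E/F}(\tau\pair{\textrm{-},\textrm{-}}_W)$, so that $W_F$ is a Lagrangian with stabilizer $\Sp(W_F)$ in $\Sp(W)$, is equivalent bookkeeping to the paper's $\FW$ with Lagrangian $X_F\oplus\tau Y_F$ and conjugation by $c_\tau$. Your first evaluation is just the base change seesaw identity.

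There is, however, a genuine gap in your second evaluation. The $\O(V_F)$-coinvariants of the Weil representation are $\Theta_{V_F,\FW,\psi_F}(1)\cong R_\FW(V_F)$, which the Rallis map places inside the degenerate principal series $I_\FP^\FG(s_0)$ of the big symplectic group.
\begin{itemize}
\item The filtration you want, with open piece $C_c^\infty(\Sp(W_F)\bs\Sp(W))$ and other pieces induced from proper parabolics of $\Sp(W)$, is a filtration of all of $I_\FP^\FG(s_0)$. It comes from the $\Sp(W)$-orbits on the Lagrangian Grassmannian $\FP\bs\FG$, which are classified by $\dim_E(L\cap\tau L)$.
\item It cannot come from an orbit analysis of $\O(V_F)$ on $\Hom(Y,V)$, because $\Sp(W)$ does not act geometrically on any Schr\"odinger model.
\item $R_\FW(V_F)$ is not the whole degenerate principal series, since $m>n$ is not the stable range. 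For $m>n$ one has
\[
0\lra R_\FW(V_F)\lra I_\FP^\FG(s_0)\lra R_\FW(\wt{V}_{\rc,F})\lra 0,
\]
and the quotient is nonzero in general (e.g.\ $m=n+1$, $n\geq 2$).
\end{itemize}

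Consequently, supercuspidality alone gives only half of what you need. Since $\pi_\tau$ is injective, restriction $\Hom_{\Sp(W)}(I_\FP^\FG(s_0),\pi_\tau)\to\Hom_{\Sp(W)}(R_\FW(V_F),\pi_\tau)$ is surjective. Your correct point that negligible strata contribute neither $\Hom$ nor $\Ext^1$ into a supercuspidal identifies the source with $\Hom_{\Sp(W_F)}(\pi^\vee,\BC)$. This yields only $\dim\Hom_{\O(V_F)}(\sigma,\BC)\leq\dim\Hom_{\Sp(W_F)}(\pi^\vee,\BC)$.

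The missing idea is the vanishing of the kernel $\Hom_{\Sp(W)}(R_\FW(\wt{V}_{\rc,F}),\pi_\tau)$. By the same seesaw this equals $\Hom_{\O(\wt{V}_{\rc,F})}(\Theta_{\wt{V}_\rc,W,\psi}(\pi_\tau),\BC)$, and $\Theta_{\wt{V}_\rc,W,\psi}(\pi_\tau)=0$ for one of two reasons:
\begin{itemize}
\item if $V_{\rc,F}\otimes_FE\cong V$, then $\wt{V}_\rc$ lies strictly below $V$ in its Witt tower, so first occurrence gives the vanishing;
\item otherwise, theta dichotomy applies, since $\dim V+\dim\wt{V}_\rc=4n+2$.
\end{itemize}
This step, together with the Kudla--Rallis structure of $I_\FP^\FG(s_0)$, is where the first-occurrence hypothesis and $m>n$ genuinely enter. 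They do not enter through convergence or the choice of open orbit.

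Relatedly, your ``Frobenius-type identity'' $\Hom_{\Sp(W_F)}(\pi,\BC)\cong\Hom_{\Sp(W)\times\O(V_F)}(\omega_{\psi_\tau},\pi\otimes\BC)$ is, by the seesaw, a restatement of the theorem rather than an input. Also, your intermediate space must consist of $\Sp(W)\times\O(V_F)$-invariant forms, not $\Sp(W_F)\times\O(V_F)$-invariant ones, for taking $\Sp(W)$-coinvariants to make sense.
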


The second result establishes an explicit transfer map between the spaces of Galois periods. We define the \emph{base change doubling zeta integral} which induces a linear map $$T_{W_F}: \Hom_{\Sp(W_F)}(\pi,\BC)\lra\Hom_{\O(V_F)}(\sigma,\BC).$$

\begin{thm}[Theorem \ref{thm transf1}] if $m>n$, the map $T_{W_F}$ is an isomorphism.
 \end{thm}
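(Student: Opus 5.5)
We will realize $T_{W_F}$ concretely and then reduce its bijectivity to the multiplicity equality of Theorem \ref{thm symp-orth} together with injectivity (or surjectivity) of $T_{W_F}$.

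\emph{Construction of $T_{W_F}$.} Let $\omega=\omega_{V,W,\psi_\tau}$ be the Weil representation of $\O(V)\times\Sp(W)$ on $\CS(V\otimes W)$. Fix a $\Sp(W_F)$-invariant form $\alpha$ on $\pi$. The base change doubling zeta integral is, for $\phi\in\CS(V\otimes W)$ and $v\in\pi$,
$$Z(\phi,v,\alpha)=\int_{\Sp(W_F)\bs\Sp(W)}\cdots$$
or, in its simplest form, the integral $\int_{\Sp(W)}\alpha(\pi(g)v)\,\overline{(\omega(g)\phi)}\,\d g$ restricted along the $F$-structure. More precisely, we pair matrix coefficients of $\pi$ with the restriction of $\omega$ to the $F$-points. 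Because $\tr_{E/F}(\tau)=0$, the restriction of $\omega$ to $\O(V_F)\times\Sp(W)$ is related to the Weil representation attached to the doubled space $W_F\oplus W_F^{-}$ over $F$. Indeed, $\psi_\tau$ restricted to $F$ is trivial, so $(V\otimes W)$ viewed over $F$ carries a polarization in which $V_F\otimes W$ is Lagrangian-like. This produces an $\O(V_F)$-invariant functional $\ell_\alpha$ on $\omega\otimes\pi$, given by an absolutely convergent integral over $\Sp(W)$ (or over $\Sp(W_F)\bs\Sp(W)$). Convergence follows because $\pi$ is supercuspidal, so its matrix coefficients are compactly supported modulo the center. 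Since $\ell_\alpha$ is $\Sp(W)$-invariant in the appropriate twisted sense, it factors through the maximal $\pi^\vee$-isotypic quotient, namely $\pi\boxtimes\sigma$ (here I use $\pi^\vee\cong\bar\pi$). This yields $T_{W_F}(\alpha)\in\Hom_{\O(V_F)}(\sigma,\BC)$.

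\emph{Injectivity.} To show $T_{W_F}(\alpha)\neq0$ for $\alpha\neq0$, I would unfold the zeta integral. Write $\phi$ as a partial Fourier transform adapted to the doubling see-saw $(\O(V_F)\times\O(V_F),\,\O(V))$ against $(\Sp(W),\,\Sp(W_F))$, with the functional $\phi\mapsto\int_{V_F\otimes W_F}\phi$-type invariant. Choosing $\phi$ supported near the identity in a suitable Siegel-type open cell, $Z(\phi,v,\alpha)$ reduces to $\alpha(\pi(f)v)$ for an arbitrary $f\in C_c^\infty(\Sp(W))$. This is nonzero for suitable choices because $\alpha\neq0$ and $\pi$ is irreducible. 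The condition $m>n$ is what makes the relevant orbit of $\Sp(W)$ on $V\otimes W$ (the full-rank maps $W\to V$) open and dense with stabilizer trivial. It therefore allows the unfolding to an integral over $\Sp(W)$ itself.

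\emph{Conclusion and main obstacle.} Injectivity plus the equality $\dim\Hom_{\Sp(W_F)}(\pi,\BC)=\dim\Hom_{\O(V_F)}(\sigma,\BC)$ from Theorem \ref{thm symp-orth}, together with finiteness (Delorme), gives that $T_{W_F}$ is an isomorphism. The main obstacle I expect is the unfolding step. One must identify the orbit structure of $\Sp(W_F)$ (or of $\O(V_F)$) on the relevant model of $\omega|_{F}$ and justify exchanging integrals. One must also check that the contributions of non-open orbits vanish, and this is where the first-occurrence hypothesis is used: $\pi$ has no theta lift to smaller Witt towers, so boundary terms carry no $\pi$-isotypic quotient. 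My first attempt would be to use the mixed model $\CS(V_F\otimes W)$ and compute directly on functions supported in the open orbit.
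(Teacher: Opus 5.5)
\textbf{There is a genuine gap, and it is in the injectivity step.} Your last step (injectivity plus the dimension equality of Theorem \ref{thm symp-orth}, both spaces being finite-dimensional) is fine.

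You claim that for suitable $\phi$ the test function $p_{V_F}(\phi)$ can be an arbitrary element of $C_c^\infty(X_{W_F})$. Then $T_{W_F}(\alpha)(\phi,v)$ would become $\int_{X_{W_F}}f(g)\alpha(\pi^\vee(g)v)\,\d g$. But the image of $p_{V_F}$ is $R_\FW(V_F)^\tau|_{\Sp(W)}$. After the $\tau$-twist, $C_c^\infty(X_{W_F})$ corresponds to the sections of $I_\FP^\FG(s_0)$ supported on the open $\Sp(W)$-orbit.

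That orbit is open and dense, and $\FG$ acts transitively on $\FP\bs\FG$. By a partition-of-unity argument, these sections therefore generate $I_\FP^\FG(s_0)$ as a $\FG$-module. So your claim would force $R_\FW(V_F)=I_\FP^\FG(s_0)$. By Proposition \ref{prop degps-symp} this fails as soon as $\wt{V}_{\rc,F}$ exists, because the quotient $I_\FP^\FG(s_0)/R_\FW(V_F)\cong R_\FW(\wt{V}_{\rc,F})$ is then non-zero. This is exactly why the paper only gets $C_c^\infty(X_{W_F})\subset\CS(X_{W_F})$ after adding $p_{V_{\rc,F}}$.

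A telltale sign is that your argument never genuinely uses the first-occurrence hypothesis. You assign it to killing the non-open-orbit contributions. For supercuspidal $\pi$, however, those vanish for free, because the Jacquet modules $R_{\overline{P}_k}(\pi_\tau)$ are zero. Separately, the full-rank maps $W\to V$ form an open dense set, not a single $\Sp(W)$-orbit. The relevant geometry is that of $\Sp(W)$ acting on Lagrangians of $\FW$, not on $V\otimes W$.

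\textbf{What is needed instead is the degenerate principal series argument.}
\begin{enumerate}
\item Since $\pi$ is supercuspidal, restriction gives $\Hom_{\Sp(W)}(I_\FP^\FG(s_0),\pi_\tau)\cong\Hom_{\Sp(W)}(J_0(s_0),\pi_\tau)\cong\Hom_{\Sp(W_F)}(\pi^\vee,\BC)$.
\item The extension of the Frobenius-reciprocity functional attached to $\alpha$, from the open-orbit piece to all of $I_\FP^\FG(s_0)^\tau\otimes\pi^\vee$, is the convergent zeta integral $Z(s_0,\alpha,\textrm{-},\textrm{-})$. Its restriction to $R_\FW(V_F)^\tau\otimes\pi^\vee$ is $T_{W_F}(\alpha)$.
\item The kernel of restriction $\Hom_{\Sp(W)}(I_\FP^\FG(s_0),\pi_\tau)\to\Hom_{\Sp(W)}(R_\FW(V_F),\pi_\tau)$ is $\Hom_{\Sp(W)}(R_\FW(\wt{V}_{\rc,F}),\pi_\tau)\cong\Hom_{\O(\wt{V}_{\rc,F})}(\Theta_{\wt{V}_\rc,W,\psi}(\pi_\tau),\BC)$.
\item This kernel vanishes precisely because of first occurrence together with theta dichotomy. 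That is where the hypothesis enters, and it gives injectivity.
\end{enumerate}
Your dimension count then finishes the proof. Alternatively, the restriction map is also surjective because $\pi_\tau$ is an injective object, and the seesaw identity gives the isomorphism directly.
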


Dually, we can define a linear map in the opposite direction:
$$T_{V_F}: \Hom_{\O(V_F)}(\sigma,\BC)\lra\Hom_{\Sp(W_F)}(\pi,\BC).$$ Let $\pair{\textrm{-},\textrm{-}}_{X_{W_F}}$ and
 $\pair{\textrm{-},\textrm{-}}_{X_{V_F}}$ be the inner products on 
$\Hom_{\Sp(W_F)}(\pi,\BC)$ and $\Hom_{\O(V_F)}(\sigma,\BC)$,   respectively (defined in Sect. \ref{subsec adj rel}).
Our third result shows that $T_{W_F}$ and $T_{V_F}$ are adjoint to each other.

\begin{thm}[Theorem \ref{thm adjoint}] 
For any $\alpha\in\Hom_{\Sp(W_F)}(\pi,\BC)$ and 
$\beta\in\Hom_{\O(V_F)}(\sigma,\BC)$, we have
$$\pair{T_{W_F}(\alpha),\beta}_{X_{V_F}}=\pair{T_{V_F}(\beta),\alpha}_{X_{W_F}}.$$ 
\end{thm}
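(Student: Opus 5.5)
The adjoint relation will come from writing both sides as one absolutely convergent integral over $\Sp(W_F)\times\O(V_F)$ (modulo centres where necessary), built from matrix coefficients of $\pi$ and $\sigma$ and the Weil representation, and then swapping the order of integration. I will work from the explicit definitions of $T_{W_F}$, $T_{V_F}$ and the inner products in Sect.~\ref{subsec adj rel}.

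First I recall how these objects are built. The map $T_{W_F}$ is the base change doubling zeta integral: for $\alpha\in\Hom_{\Sp(W_F)}(\pi,\BC)$, the functional $T_{W_F}(\alpha)$ on $\sigma$ is obtained by integrating $\alpha$ against matrix coefficients of the Weil representation $\omega_{\psi_\tau}$ of $\Sp(W)\times\O(V)$, with the integration running over $\Sp(W_F)$. Concretely, I expect a formula of the shape
$$T_{W_F}(\alpha)(\theta(\phi,v))=\int_{\Sp(W_F)}\alpha(\pi(g)v)\,\overline{\ell_{V_F}(\omega(g)\phi)}\,dg,$$
where $\ell_{V_F}$ is the natural $\O(V_F)$-invariant functional on $\omega$ (evaluation or an integral over $V_F\otimes W_F$-type Lagrangians) and $\theta$ is the theta map. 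The map $T_{V_F}$ is defined symmetrically. The inner product $\pair{\textrm{-},\textrm{-}}_{X_{W_F}}$ should be the Hermitian form on periods coming from the relative Plancherel, or formal degree, pairing. For supercuspidal $\pi$ it can be written as
$$\pair{\alpha,\alpha'}_{X_{W_F}}=\int_{\Sp(W_F)\bs\Sp(W)}\alpha\bigl(\pi(h)v\bigr)\overline{\alpha'\bigl(\pi(h)v\bigr)}\,dh,$$
suitably normalised; equivalently, via a relative matrix coefficient $\Phi_{\pi,\alpha,\alpha'}$ paired with a test vector.

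Second, I expand both sides. Using the definition of $T_{W_F}(\alpha)$ inside $\pair{\textrm{-},\beta}_{X_{V_F}}$, the left side becomes an iterated integral over $\O(V_F)\bs\O(V)$ and $\Sp(W_F)$. The integrand involves $\alpha$, $\beta$ (through $\overline{\beta(\sigma(h)w)}$) and the Weil representation on the doubled space. Unfolding along the see-saw pair $(\Sp(W_F)\times\Sp(W_F),\ \O(V))$ versus $(\Sp(W),\ \O(V_F)\times\O(V_F))$ turns both sides into a single integral of the form
$$\int\!\!\int \alpha(\pi(g)v)\,\overline{\beta(\sigma(h)w)}\,\pair{\omega(g,h)\phi,\phi'}\,dg\,dh .$$
The integral over $g$ runs over a quotient of $\Sp(W)$ by $\Sp(W_F)$, and the integral over $h$ over a quotient of $\O(V)$ by $\O(V_F)$. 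The right side unfolds to the same expression with the roles of the two groups swapped, so Fubini gives the equality. The Hermitian symmetry (complex conjugates) matches because the contragredient of $\pi$ is $\bar\pi$, as noted in the introduction.

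The main obstacle will be justifying absolute convergence so that Fubini applies. The Weil representation matrix coefficient $\pair{\omega(g,h)\phi,\phi'}$ is not compactly supported on $\O(V)$, and $\alpha,\beta$ are only invariant, not decaying. My first attempt is to use supercuspidality of $\pi$: the matrix coefficients of $\pi$ are compactly supported modulo centre, so the $g$-integral is over a compact set. Then I will bound the $h$-integral using the first-occurrence stable range estimate $m>n$, which makes $\omega$ restricted to $\O(V)$ have coefficients that decay fast enough, as in Kudla's or Rallis' convergence arguments. If that fails, I will replace $\phi$ by vectors in the $\pi$-isotypic projection $\int \overline{\pi(g)}\,\omega(g)\,dg$, which is compactly supported in $g$. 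This reduces everything to finite sums, and any remaining convergence issue is absorbed into the already-established well-definedness of $T_{W_F}$ and $T_{V_F}$ (Theorem~\ref{thm transf1}).
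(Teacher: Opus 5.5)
\textbf{The gap.} Your skeleton matches the paper's: use the defining identities of $\pair{\cdot,\cdot}_{X_{W_F}}$ and $\pair{\cdot,\cdot}_{X_{V_F}}$ to turn both sides into one double integral over $X_{W_F}\times X_{V_F}$, then swap the order. But the step ``the right side unfolds to the same expression, so Fubini gives the equality'' skips the essential point, and as written it is false.

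Each inner product can only be extracted after pairing with test vectors.
\begin{itemize}
\item $\pair{T_{W_F}(\alpha),\beta}_{X_{V_F}}$ appears multiplied by $\pair{u,w}_\sigma$. To unfold $T_{W_F}(\alpha)(\sigma(h)u)$ you must take $u=\theta(\phi,v)$.
\item Symmetrically, $\pair{T_{V_F}(\beta),\alpha}_{X_{W_F}}$ appears multiplied by $\pair{\theta(\phi,w),v}_\pi$. Here $\theta(\phi,w)\in\pi$ only makes sense after identifying $\Theta_{V,W,\psi_\tau}(\sigma)$ with $\pi$ via Howe duality.
\end{itemize}
Fubini, together with the kernel symmetry $p_{V_F}(\omega(h)\phi)(g)=q_{W_F}(\omega(g)\phi)(h)$ of Lemma~\ref{lem test-fcn-com} (which rests on $\CI_\FW(\phi)(0)=\CI_{\FV_\tau}(\phi)(0)$, Lemma~\ref{lem int-val1}), therefore only gives
$$\pair{T_{W_F}(\alpha),\beta}_{X_{V_F}}\,\pair{\theta(\phi,v),w}_\sigma=\pair{T_{V_F}(\beta),\alpha}_{X_{W_F}}\,\pair{\theta(\phi,w),v}_\pi .$$

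The missing idea is that the two invariant forms $\phi\otimes v\otimes w\mapsto\pair{\theta(\phi,v),w}_\sigma$ and $\phi\otimes v\otimes w\mapsto\pair{\theta(\phi,w),v}_\pi$ are proportional, with a nonzero constant $c$. This is the identity \eqref{equ inner-equal} the paper quotes from Gan--Wan. It reflects the one-dimensionality of $\Hom_{\Sp(W)\times\O(V)}(\omega_{V,W,\psi_\tau},\pi\boxtimes\sigma)$ given by Howe duality. You then also need to normalise $\pair{\cdot,\cdot}_\pi$ and $\pair{\cdot,\cdot}_\sigma$ (equivalently, the identification $\Theta(\sigma)\cong\pi$) so that $c=1$, as in Remark~\ref{rem c and abs}. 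Without that normalisation the constant-free identity is not true as stated; in general it holds only up to a nonzero scalar, which is exactly how the paper's Theorem~\ref{thm adjoint} is formulated. Your proposal never mentions this relation, so the argument does not close.

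\textbf{Secondary issues.} Your guessed formula for $T_{W_F}$ integrates over $\Sp(W_F)$, where $g\mapsto\alpha(\pi(g)v)$ is constant. The integral is over $X_{W_F}=\Sp(W_F)\backslash\Sp(W)$. The kernel is not a Weil matrix coefficient $\pair{\omega(g,h)\phi,\phi'}$; it is the invariant functional $\phi\mapsto\CI_\FW(\phi)(0)$ applied to $\omega(g)\omega(h)\phi$. Also, the relevant seesaw is the base change one, not the classical doubling seesaw $(\Sp(W_F)\times\Sp(W_F),\O(V))$.

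For convergence, no decay of $\omega$ and no stable-range estimate is needed: the kernel is locally constant, hence bounded on compact sets. What is needed is that $\sigma$ is also supercuspidal (Kudla, first occurrence). Then both $\alpha$ and $\beta$ are relatively supercuspidal (Kato--Takano), so $g\mapsto\alpha(\pi(g)v)$ and $h\mapsto\beta(\sigma(h)w)$ are compactly supported on $X_{W_F}$ and $X_{V_F}$, and Fubini is immediate.

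Two of your fallback justifications do not work. Compact support of ordinary matrix coefficients of $\pi$ does not by itself give compact support on $X_{W_F}$. Well-definedness of $T_{W_F}$ and $T_{V_F}$ separately does not justify interchanging the iterated integrals.
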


Our final result  is a relative character identity that connects the periods and their transfers.

\begin{thm}[Theorem \ref{thm rel-char}]
For all $\alpha\in\Hom_{\Sp(W_F)}\left(\pi,\BC\right)$ and $\beta\in\Hom_{\O(V_F)}\left(\sigma,\BC\right)$, and for test functions $f$ on $\Sp(W_F)\bs\Sp(W)$ and $f'$ on $\O(V_F)\bs\O(V)$ that  correspond to each other in the sense of Definition \ref{def function}, we have
$$\Phi_{\pi,\alpha,T_{V_F}(\beta)}(f)=\Phi_{\sigma,\beta,T_{W_F}(\alpha)}(f').$$ 
\end{thm}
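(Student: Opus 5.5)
The plan is to unfold both relative characters into the same integral over the Weil representation, using the ingredients already set up in the paper. These are the relative character $\Phi_{\pi,\alpha,\alpha'}$ of Sect.~\ref{subsec rel-char}, the transfer maps $T_{W_F}$ and $T_{V_F}$ given by base change doubling zeta integrals, and the correspondence of test functions of Definition~\ref{def function}. The key tool is the seesaw for the pairs $(\O(V),\Sp(W))$ and $(\O(V_F)\times\O(V_F),\ldots)$, or more concretely the Schrödinger model of the Weil representation $\omega_{\psi_\tau}$ of $\Sp(W)\times\O(V)$ on $\mathcal{S}(V\otimes W)$. Restricting $\psi_\tau$ to $V_F\otimes W_F$-directions, the $\tau$-twist makes the character trivial on the $F$-rational part. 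This is what makes invariant functionals on both sides come from integration over the $F$-points $V_F\otimes W_F$ (or over $\mathcal{S}(V_F\otimes W_F)$-type distributions).

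First, I would write $\Phi_{\pi,\alpha,T_{V_F}(\beta)}(f)$ as a sum over an orthonormal basis of $\pi$, namely $\sum_{v}\pi(f)\alpha(v)\,\overline{T_{V_F}(\beta)(v)}$. Here $T_{V_F}(\beta)(v)$ is realized by pairing $v$ against $\beta$ through the theta kernel. Concretely, $T_{V_F}(\beta)(v)=\beta\big(\theta(\phi\otimes v)\big)$ after choosing $\phi$, or, in the zeta-integral form, an integral over $\O(V_F)\bs\O(V)$ of a matrix coefficient of $\omega_{\psi_\tau}$ against $\beta$.

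Next, I would use that $\pi$ is supercuspidal and $\sigma=\Theta(\pi)$ is irreducible supercuspidal (Kudla). This gives the identity of matrix coefficients $\langle\omega(g,h)\phi,\phi'\rangle$ projected to the $\pi\otimes\sigma$-isotypic part, namely $\sum \langle\pi(g)v,v'\rangle\langle\sigma(h)w,w'\rangle$, with formal degrees appearing as constants. With this, both $\pi(f)$ and $\sigma(f')$ are expressed through the same kernel: $f$ and $f'$ correspond precisely when their images under the Weil representation agree on $\mathcal{S}(V\otimes W)$, projected to the relevant coinvariants.

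Then the sums over orthonormal bases of $\pi$ and $\sigma$ become a single double sum. Swapping the order of summation and integration, which is justified by the compact support of supercuspidal matrix coefficients modulo the $F$-subgroups and by the finiteness of multiplicities, gives a symmetric expression $\sum_{v,w}\alpha(v)\overline{\beta(w)}\cdot(\text{kernel})$. Read in one order this is $\Phi_{\pi,\alpha,T_{V_F}(\beta)}(f)$, and in the other it is $\Phi_{\sigma,\beta,T_{W_F}(\alpha)}(f')$. The adjoint relation of Theorem~\ref{thm adjoint} is the special case with trivial test functions, and it guides the bookkeeping of the constants.

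The main obstacle is the convergence and interchange step. The Galois period integrals over $\Sp(W_F)\bs\Sp(W)$ and $\O(V_F)\bs\O(V)$ of theta-kernel matrix coefficients need not be absolutely convergent in general. I would handle this using the first-occurrence hypothesis and $m>n$ (the stable range, via Theorem~\ref{thm transf1}) to get absolute convergence of the base change doubling zeta integrals. Compact-modulo-center support of the matrix coefficients would then allow Fubini, and the formal-degree and measure constants should cancel by the normalization in Definition~\ref{def function}.
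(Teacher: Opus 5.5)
\textbf{There is a genuine gap: the proposal never supplies what makes the two sides equal.} The sentence ``both $\pi(f)$ and $\sigma(f')$ are expressed through the same kernel \dots\ gives a symmetric expression'' \emph{is} the theorem, and the kernel is never defined.

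The concrete ingredients you do write down are also off:
\begin{itemize}
\item $T_{V_F}(\beta)$ is not $v\mapsto\beta(\theta(\phi\otimes v))$. It is the functional on $\pi=\Theta(\sigma)$ determined by $T_{V_F}(\beta)(\theta(\phi,w))=\int_{X_{V_F}}q_{W_F}(\phi)(h)\overline{\beta(\sigma(h)w)}\,\d h$.
\item Corresponding test functions are not defined by agreement of Weil-representation images on coinvariants. They are defined by $f=p_{V_F}(\phi)$ and $f'=q_{W_F}(\phi)$ for one common $\phi$.
\item $f$ is not a matrix coefficient $\pair{\omega(g)\phi,\phi'}$ with a Schwartz vector $\phi'$. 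It is $g\mapsto\CI_\FW(\omega_{V,W,\psi_\tau}(g)\phi)(0)$, a generalized matrix coefficient against the invariant distribution $\phi\mapsto\int_{\tau Y_F\otimes V_F}\phi$. So the matrix-coefficient decomposition with formal degrees that you invoke does not apply as stated.
\end{itemize}

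The paper's argument is short and purely algebraic. Since $f=p_{V_F}(\phi)$, the definition of $T_{W_F}$ gives directly $(\pi(f)\alpha)(v)=T_{W_F}(\alpha)(\theta(\phi,v))$. Likewise $(\sigma(f')\beta)(w)=T_{V_F}(\beta)(\theta(\phi,w))$. Expanding $\theta(\phi,v)\in\sigma$ and $\theta(\phi,w)\in\pi$ in orthonormal bases turns the two relative characters into
\[\sum_{v,w}\pair{\theta(\phi,v),w}_\sigma\, T_{W_F}(\alpha)(w)\,T_{V_F}(\beta)(v)\quad\text{and}\quad\sum_{v,w}\pair{\theta(\phi,w),v}_\pi\, T_{V_F}(\beta)(v)\,T_{W_F}(\alpha)(w).\]
The missing idea is the adjointness of the two theta maps, $\pair{\theta(\phi,v),w}_\sigma=c\,\pair{\theta(\phi,w),v}_\pi$. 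This is equation (\ref{equ inner-equal}) from Gan--Wan, with $c=1$ after normalizing the inner products. It holds because both sides lie in $\Hom_{\Sp(W)\times\O(V)}(\omega_{V,W,\psi_\tau}\otimes\overline{\pi}\otimes\overline{\sigma},\BC)$. That space equals $\Hom_{\O(V)}(\Theta_{V,W,\psi_\tau}(\pi)\otimes\overline{\sigma},\BC)$, which is one-dimensional because $\Theta_{V,W,\psi_\tau}(\pi)=\sigma$ is irreducible. Without this, or an equivalent multiplicity-one input, nothing forces your double sum to be symmetric.

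Finally, the ``main obstacle'' you identify is not one. Under supercuspidality every period is relatively supercuspidal and $\pi(f)\alpha\in\pi$. All orthonormal-basis sums are finite because the vectors involved are fixed by a compact open subgroup, so no interchange of integrals is needed. The theorem does not assume $m>n$ (which in any case is not the stable range), and Theorem~\ref{thm transf1} plays no role. Theorem~\ref{thm adjoint} is also not a special case of the character identity.
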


\subsection{The method} 
 The primary tool for proving the above results is a variant of the classical doubling method of Piatetski-Shapiro and Rallis \cite{gpsr}, which we term the {\em base change doubling method}. While Lu has successfully applied this method in his study of the multiplicities of Galois periods, it possesses certain limitations. The present work develops the method further and overcomes these limitations, as we will now explain. 

\subsubsection{}  Consider the case of orthogonal groups as an example. In the base change doubling method,  
we can view $V$ as a vector space over $F$ via restriction of scalars; denote this $F$-space by $\FV$. Let $q_F$ be the quadratic form on $V_F$ and $q$ the quadratic form on $V$. Then $\Fq:=\tr_{E/F}\circ q$ defines an $F$-quadratic form on $\FV$. Since $\FV=V_F\oplus\tau V_F$ as $F$-vector spaces, $\FV$ is a ``doubled space" of $V_F$ . This is precisely the base change doubled space used by Lu. 

In the classical doubling method, the doubled space of $V_F$ is $V_F^\square=V_F\oplus V_F^-$, where $V_F^-$ denotes the space $V_F$ equipped with the form $-q_F$. A key feature is that $V^\square$ is split, possessing the Lagrangian subspace $V_F^\Delta=\{(x,x)\mid x\in V_F\}$, which plays an essential role in applications. This yields  natural embeddings $$\O(V_F)\lhra\O(V_F)\times\O(V_F)\lhra \O(V_F^\square).$$  However, $\FV$ is not split in general. Consequently, Lu had to assume that $V$ itself is split to ensure $\FV$ split. Our key idea is to first twist the quadratic form on $V$ by $\tau$ before applying the doubling construction. Precisely, let $V_\tau$ be the quadratic space with underlying space $V$ and form $q_\tau:=\tau q$. Let $\FV_\tau$ be the corresponding $F$-space via restriction of scalars, equipped with the form $\Fq_\tau:=\tr_{E/F}\circ q_\tau$. One readily checks that $\FV_\tau$ is split and contains $V_F$ as a Lagrangian. Identifying $\O(V)$ naturally with $\O(V_\tau)$, we obtain embeddings $$\O(V_F)\lhra\O(V)\lhra \O(\FV_\tau).$$ 

 We remark that the $\tau$-twist serves a dual purpose: it ensures the doubled space is split, and is moreover compatible with the data used in the local theta correspondence—specifically, the lift $\Theta_{V,W,\psi_\tau}$ rather than $\Theta_{V,W,\psi}$.

\subsubsection{}  Equipped with the base change doubling method, we apply the base change seesaw identity and analyze the structure of the degenerate principal series to compare the multiplicities of $\pi$ and $\sigma$. Beyond this multiplicity comparison, which was also studied by Lu, our work establishes new results on the explicit transfer of Galois periods and their relative character relation. 
The approach to these results is inspired, in part, by ideas presented in talks of Gan on how to transfer Harish-Chandra characters via the Weil representation, and also by the work of Gan and Wan \cite{gw21} on relative character identities
for the spherical variety $\SO_{n-1}\bs\SO_n$ and the Whittaker variety $(N,\psi)\bs\SL_2$.
A key technical tool for achieving these results is the base change doubling zeta integral, a variant of the classical doubling zeta integral. We develop and establish several of its elementary properties in this paper.

\subsection{Few remarks}
We conclude by noting some limitations and directions not covered in this paper.
 
\subsubsection{} Our results are established for representations with special properties, such as tempered, square-integrable, or supercuspidal representations. Extending these results to more general representations remains a natural and desirable direction for future work.

\subsubsection{} This paper does not discuss the local theta correspondence between odd orthogonal and metaplectic groups. For that setting, a relation between multiplicities has been obtained by Lu \cite{lu20c}.  We expect that the results of the present article have direct analogues there, and the proofs should carry over with only minor adjustments.

\subsubsection{} 
The classical doubling zeta integral belongs to the class of Rankin–Selberg integrals and serves to represent the standard $L$-functions of classical groups. Its fundamental analytic properties, including absolute convergence, meromorphic continuation, functional equation, and the unramified calculation, were established in the foundational works of Piatetski-Shapiro and Rallis \cite{gpsr}, Lapid and Rallis \cite{lr}, and Yamana \cite{yam}.

In contrast, the base change doubling zeta integral studied in this paper introduces a key new feature: the relevant linear functional is not necessarily of multiplicity one, unlike in the classical setting. This fundamental difference introduces significant challenges in developing a parallel, complete analytic theory for the base change doubling zeta integral. Consequently, the aforementioned analytic properties (convergence, meromorphic continuation, etc.) are not established for our integral here.

Nevertheless, we believe that the base change doubling zeta integral is of intrinsic interest and likely bears a close relation to certain $L$-functions, analogous to its classical counterpart.

\subsubsection{}
The classical global doubling zeta integral, combined with the Siegel–Weil formula, provides a powerful tool for studying the non-vanishing of global theta lifts. For some special dual pairs of small rank, Lu \cite{lu21} established relations between automorphic Galois periods under global theta lifting.

The theory developed in this paper for the base change doubling method readily extends to the setting of number fields. This extension allows one to define a global base change doubling zeta integral, whose unfolding naturally involves global Galois periods. With the aid of the Siegel–Weil formula, this framework can be used to establish relations for the non-vanishing of global Galois periods, not only for general even orthogonal-symplectic dual pairs but also for odd orthogonal-metaplectic dual pairs.

\subsection{Structure of this paper} Let $G$ denote either $\Sp(W)$ or $\O(V)$, and $\FG$ denote $\Sp(\FW)$ or $\O(\FV_\tau)$, respectively. The paper is organized as follows.

    Section \ref{sec doubling} provides a systematic treatment of the base change doubling construction. We analyze in detail the orbits and stabilizers for the action of $G$ on the flag variety $\FP\bs\FG$, supplying proofs and details for some results that appear scattered in the works of Lu.

    Section \ref{sec weil} reviews Weil representations and base change seesaw dual pairs. We pay careful attention to the choice of splittings to ensure that they are compatible, and discuss the concrete models of Weil representations and the transitions between them, which are crucial for our later arguments.

    Section \ref{sec theta} recalls foundational results in the theory of local theta correspondence and states the base change seesaw identity.

    Section \ref{sec deg} studies the structure of the relevant degenerate principal series. We examine its structure both as a $\FG$-module—where it relates to the big theta lift of the trivial representation—and as a $G$-module, where it relates to certain induced representations.

    Section \ref{sec mult} compares the multiplicities of representations related by the local theta correspondence, using the base change seesaw identity and the structure of the degenerate principal series established in the previous sections.

    Section \ref{sec transfer} introduces the base change doubling zeta integral and proves its absolute convergence for square-integrable and tempered representations. We then define test functions, construct explicit transfer operators for Galois periods, and finally establish an adjoint relation between Galois periods related by the transfer operators as well as the corresponding relative character relation.

\subsection{Notation and convention} Throughout out the paper, $F$ denotes a non-archimedean local field of characteristic 0 and residue characteristic $p$, and $E$ a quadratic field extension of $F$.  We write $\R_{E/F}(\textrm{-})$ for the restriction of scalars with respect to $F\subset E$. Let $\CO_F$ be the valuation ring of $F$.

Let $|\textrm{-}|_F$ and $|\textrm{-}|_E$ be the normalized absolute values on $F$ and $E$, respectively, so that $|x|_E=|\N_{E/F}(x)|_F$ for $x\in E$, where $\N_{E/F}:E\ra F$ is the norm map. Denote by $\iota$ the non-trivial automorphism of $\Gal(E/F)$. We fix a non-zero element $\tau\in E$ such that $\tr_{E/F}(\tau)=0$. 

Fix a non-trivial additive character $\psi_F$ of $F$. Define the additive character $\psi$ of $E$ by $\psi:=\psi_F\circ\tr_{E/F}$, and its $\tau$-twist by $\psi_\tau(x):=\psi(\tau x)$ for $x\in E$.

Let $G$ be a $p$-adic reductive group over $F$. All representations considered are smooth complex representations. Denote by $\Rep(G)$ the category of smooth representations of $G$, by $\Irr(G)$ the set of equivalence classes of irreducible smooth representations, and by 1 or $\BC$ the trivial representation. For $\pi\in\Rep(G)$, let $\pi^\vee$ be its contragredient. For a subgroup $H\subset G$, a representation $\pi\in\Rep(G)$ is called $H$-\emph{distinguished} if $\Hom_H(\pi,\BC)\neq0$.

Let $P=MN$ be a parabolic subgroup of $G$, with unipotent radical $N$ and Levi subgroup $M$.  Denote by $\delta_P$ the modulus character of $P$, by $\Ind_P^G$  the normalized induction functor from $\Rep(M)$ to $\Rep(G)$, and by $R_P$ the normalized Jacquet functor from $\Rep(G)$ to $\Rep(M)$. The parabolic subgroup opposite to $P$ is denoted by $\overline{P}$.

For a locally profinite space $X$, let $C_c^\infty(X)$ be the space of locally constant, compactly supported $\BC$-valued functions on $X$.

\subsection{Acknowledgements}
This work was initiated after attending a talk by Wee Teck Gan at the workshop ``Automorphic Forms, Geometry and
Representation Theory'' at Zhejiang University in 2018, and was further inspired by his later talks (online resources) on Harish-Chandra characters and Weil representations. I wish to thank Wee Teck Gan for kindly sharing with me his letters \cite{gan18a, gan18b} to  Atsushi Ichino and Yiannis Sakellaridis on this topic. I have also benefited from Hengfei Lu's series of works on Prasad's conjecture. 
This work was partially supported by NSFC Grants 12022106 and 11971223.

\section{Base change doubling}\label{sec doubling}

\subsection{Base change doubled spaces}

Let $\left(V_F,\pair{\textrm{-},\textrm{-}}_{V_F}\right)$ be a quadratic space and $\left(W_F,\pair{\textrm{-},\textrm{-}}_{W_F}\right)$ a symplectic space, both over $F$.
Set $$V=V_F\otimes_FE\quad\textrm{and}\quad W=W_F\otimes_FE,$$ equipped with the $E$-bilinear forms via extension of scalars; we denote these forms by $\pair{\textrm{-} ,\textrm{-} }_V$ and $\pair{\textrm{-},\textrm{-} }_W$, respectively.  This gives rise to natural embeddings $$\O(V_F)\lhra \O(V)\quad \textrm{and}\quad \Sp(W_F)\lhra\Sp(W).$$  Fix a complete polarization $$W_F=X_F\oplus Y_F$$ of $W_F$, and set
$$X=X_F\otimes_FE\quad\textrm{and}\quad Y=Y_F\otimes_FE.$$

\begin{rem}
In this section, $V_F$ is allowed to be of arbitrary dimension, even or odd.
\end{rem}

For the symplectic space $W_F$, its \emph{base change doubled space} is defined as $$\FW:=\R_{E/F}(W).$$ Concretely, $\FW$ is the space $W$ viewed as an $F$-vector space, equipped with the symplectic form $$\pair{\textrm{-} ,\textrm{-}}_\FW=\tr_{E/F}\circ\pair{\textrm{-},\textrm{-} }_W.$$ This gives a natural embedding
$$\Sp(W)\lhra\Sp(\FW).$$

For the quadratic space $V_F$, one may define its base change doubled space $\left(\FV,\pair{\textrm{-},\textrm{-}}_\FV\right)$ in an entirely analogous manner. For our purpose, however, we require a twisted variant. 

Let $(V_\tau,\pair{\textrm{-},\textrm{-} }_{V_\tau})$ be the quadratic space over $E$ defined by taking $V_\tau=V$ as an $E$-vector space and equipping it with the bilinear form $$\pair{\textrm{-},\textrm{-}}_{V_\tau}=\tau\pair{\textrm{-},\textrm{-}}_V.$$ We have the natural identification $$\O(V_\tau)=\O(V),$$ and consequently obtain the natural embedding $$\O(V_F)\lhra\O(V)=\O(V_\tau).$$ 
 
 The $\tau$-\emph{twisted base change doubled space} is defined as   
 $$\FV_\tau:=\R_{E/F}(V_\tau),$$ equipped with the $F$-quadratic form $$\pair{\textrm{-},\textrm{-}}_{\FV_\tau}=\tr_{E/F}\circ\pair{\textrm{-},\textrm{-} }_{V_\tau}.$$ This yields a  natural embedding
 $$\O(V)=\O(V_\tau)\longhookrightarrow\O(\FV_\tau).$$
 
For brevity, we refer to $\FW$ and $\FV_\tau$ simply as the \emph{doubled spaces}.

\begin{lem}
The quadratic space $\FV_\tau$ is split.
\end{lem}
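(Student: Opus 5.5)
We show directly that $V_F\subset\FV_\tau$ is a Lagrangian subspace. Since $\FV_\tau$ has $F$-dimension $2\dim_F V_F$, it then suffices to check that $V_F$ is totally isotropic for $\pair{\textrm{-},\textrm{-}}_{\FV_\tau}$ and that $\pair{\textrm{-},\textrm{-}}_{\FV_\tau}$ is non-degenerate. A non-degenerate quadratic space of dimension $2d$ containing a totally isotropic subspace of dimension $d$ is split.

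\emph{Isotropy.} For $x,y\in V_F$ we have
$$\pair{x,y}_{\FV_\tau}=\tr_{E/F}\bigl(\tau\pair{x,y}_V\bigr)=\tr_{E/F}\bigl(\tau\pair{x,y}_{V_F}\bigr)=\pair{x,y}_{V_F}\,\tr_{E/F}(\tau)=0,$$
because $\pair{x,y}_{V_F}\in F$ and $\tr_{E/F}(\tau)=0$ by the choice of $\tau$. Hence $V_F$ is totally isotropic.

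\emph{Non-degeneracy.} Suppose $v\in V$ satisfies $\tr_{E/F}(\tau\pair{v,w}_V)=0$ for all $w\in V$. Since $\pair{v,\textrm{-}}_V$ is $E$-linear, replacing $w$ by $aw$ with $a\in E$ gives $\tr_{E/F}(a\tau\pair{v,w}_V)=0$ for all $a\in E$. The trace form $(a,b)\mapsto\tr_{E/F}(ab)$ on $E$ is non-degenerate (here we use that $E/F$ is separable, as $\mathrm{char}\,F=0$), so $\tau\pair{v,w}_V=0$ for every $w$. Since $\tau\neq0$ and $\pair{\textrm{-},\textrm{-}}_V$ is non-degenerate (being the base change of the non-degenerate form on $V_F$), we conclude $v=0$.

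\emph{Splitting.} Concretely, $\FV_\tau=V_F\oplus\tau V_F$, since $\{1,\tau\}$ is an $F$-basis of $E$: note $\tau\notin F$, because otherwise $\tr_{E/F}(\tau)=2\tau\neq0$. The subspace $\tau V_F$ is also totally isotropic, since $\tr_{E/F}(\tau^3)=\tau^2\tr_{E/F}(\tau)=0$ using $\tau^2\in F$. The pairing between the two summands is
$$\pair{x,\tau y}_{\FV_\tau}=\tr_{E/F}(\tau^2)\pair{x,y}_{V_F}=2\tau^2\pair{x,y}_{V_F},$$
which is a nonzero multiple of a non-degenerate form. So this decomposition is an explicit complete polarization of $\FV_\tau$, which also recovers the non-degeneracy directly. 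No step is a real obstacle; the only point needing care is the non-degeneracy argument, which the explicit polarization settles.
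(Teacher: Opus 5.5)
Your proof is correct and takes the same approach as the paper, which simply notes that $V_F$ is a totally isotropic subspace of $\FV_\tau$ of half its dimension. Your extra checks fill in details the paper leaves implicit: the non-degeneracy of $\tr_{E/F}\circ\pair{\textrm{-},\textrm{-}}_{V_\tau}$, and the complementary Lagrangian $\tau V_F$ (where $\tau^2\in F$ because $\iota(\tau)=-\tau$), which matches the polarization $\FV_\tau=\tau V_F\oplus V_F$ the paper uses later.
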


\begin{proof}
By construction, $V_F$ is an isotropic subspace of $\FV_\tau$ and satisfies $\dim V_F=\frac{1}{2}\dim \FV_\tau$. Hence $\FV_\tau$ is split.
\end{proof}

\begin{rem}
In general, the untwisted doubled space $\FV$ is not split. This is why we work with the twisted variant $\FV_\tau$.
\end{rem}

\subsection{Invariants of the double cosets} In this subsection, we
set  $$G=\Sp(W)\quad\textrm{or}\quad G=\O(V)=\O(V_\tau),$$ and accordingly $$\FG=\Sp(\FW)\quad\textrm{or}\quad \FG=\O(\FV_\tau).$$  For brevity, we write $$q=\pair{\textrm{-},\textrm{-}}_W\ \textrm{or}\ \pair{\textrm{-},\textrm{-}}_V,\quad\Fq=\pair{\textrm{-},\textrm{-}}_\FW\ \textrm{or}\ \pair{\textrm{-},\textrm{-}}_{\FV_\tau}.$$

Let $\FL$ be a fixed Lagrangian subspace of $\FW$  (resp. $\FV_\tau$), and $\FP=\Stab_\FG(\FL)$ the  Siegel parabolic subgroup that stabilizes $\FL$. 
We aim to understand the double coset space $$\FP\bs\FG/G.$$ Denote $$\CX:=\FP\bs\FG,$$ which parametrizes Lagrangians of $\FW$ (resp. $\FV_\tau$).
We fix the Lagrangian $\FL$ as follows. 
\begin{itemize}
\item For $\FW$, set \begin{equation}\label{equ lag1}\FL=X_F\oplus \tau Y_F.\end{equation}
\item For $\FV_\tau$, set \begin{equation}\label{equ lag2}\FL=\tau V_F.\end{equation}
\end{itemize}

For $L\in\CX$, define
\begin{equation}\label{equ L_an}L_\an:=\{x\in L\mid q(x,y)=0,\forall\ y\in L \}\quad\textrm{and}\quad \overline{L}:=L/L_\an.\end{equation} 
If $G=\O(V)$, since $\Fq$ vanishes on $L$,  the restriction of $q$  to $L$ takes values in $F$. Hence $q$ induces a  quadratic form 
	$$q_{\overline{L}}:\overline{L}\times \overline{L}\lra F.$$
	
\begin{prop}\label{prop orbits}
Let $L\in\CX$.
\begin{enumerate}
\item If $G=\Sp(W)$, then $\dim(\overline{L})$ is the unique invariant of the $G$-orbit of $L$ in $\CX$.
\item If $G=\O(V)$, then the isomorphism class of the quadratic space $(\overline{L},q_{\overline{L}})$ is the unique invariant of the $G$-orbit of $L$ in $\CX$.
\end{enumerate}
\end{prop}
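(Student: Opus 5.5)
Here $L$ is an $F$-subspace of the $E$-space $W$ (resp. $V$), with $\dim_F L=\dim_E W$ (resp. $\dim_E V$) and $\Fq|_{L\times L}=0$, i.e. $\tr_{E/F}q(x,y)=0$ for $x,y\in L$. The plan is to reduce everything to the relative position of $L$ and $\tau L$. Invariance is immediate: for $g\in G$, $q(gx,gy)=q(x,y)$ and $g$ is $E$-linear, so $g$ carries $L_\an$ onto $(gL)_\an$ and induces an isometry $\overline{L}\to\overline{gL}$. It remains to prove completeness, i.e. that $L$ and $L'$ with isomorphic invariants lie in the same $G$-orbit.

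\textbf{Step 1: structure of $L$.} First I would show $L_\an=L\cap\tau L$. Set $c:=\tau q$ in the orthogonal case and $c:=q$ in the symplectic case, so that $\Fq=\tr_{E/F}\circ c$. Since $\tr_{E/F}(\tau)=0$ we have $\tau F=\{z\in E:\tr_{E/F}(z)=0\}$. The isotropy of $L$ therefore says $c(L,L)\subset\tau F$. In the orthogonal case this means $q(L,L)\subset F$, consistent with the definition of $q_{\overline L}$; in the symplectic case it means $q(L,L)\subset\tau F$. From this one checks that $x\in L$ lies in $L_\an$ iff $\Fq(x,\tau y)=0$ for all $y\in L$. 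Using maximality of $L$ ($L=L^{\perp_\Fq}$), this identifies $L_\an$ with $L\cap\tau L$, and the latter is an $E$-subspace since $\tau^2\in F$. Next, choose an $F$-complement $L_0$ of $L_\an$ in $L$. Then $L_0\cap\tau L_0=0$, so $U:=EL_0=L_0\oplus\tau L_0$ is an $E$-subspace with $L_0$ as an $F$-form, and $q|_{U}$ is the base change of the $F$-valued (resp. $\tau F$-valued, rescaled by $\tau^{-1}$ to be $F$-valued) form $q|_{L_0}$. That form is nondegenerate, being identified with $q_{\overline L}$. Hence $U$ is a nondegenerate $E$-subspace, and
$$V\ (\text{resp. } W)=U\oplus U^\perp,\qquad L_\an\subset U^\perp .$$
A dimension count, $\dim_F L=2\dim_E L_\an+\dim_E U$, shows that $L_\an$ is a Lagrangian (maximal totally isotropic $E$-subspace) of the nondegenerate space $U^\perp$.

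\textbf{Step 2: matching.} Let $L'$ have isomorphic invariants, with decomposition $U'\oplus U'^\perp$ and $L'_\an\subset U'^\perp$. An isometry $\phi:(\overline L,q_{\overline L})\to(\overline{L'},q_{\overline{L'}})$ corresponds to an $F$-isometry $L_0\to L'_0$, and extending scalars gives an $E$-isometry $U\to U'$ carrying $L_0$ to $L'_0$. In the symplectic case, by Step 1 every $\overline L$ of a given dimension is a $\tau^{-1}$-scaled symplectic $F$-space of that dimension, all of which are isomorphic, so only $\dim\overline L$ matters. By Witt's theorem $U^\perp\cong U'^\perp$, and both are split, containing the Lagrangians $L_\an,L'_\an$. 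Witt's extension theorem then gives an isometry $U^\perp\to U'^\perp$ carrying $L_\an$ to $L'_\an$. The direct sum of the two isometries is an element $g\in G$ with $gL=L_0\oplus L_\an\mapsto L'_0\oplus L'_\an=L'$.

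\textbf{Main obstacle.} I expect the main difficulty to be the identification $L_\an=L\cap\tau L$ together with the resulting nondegeneracy of $q|_{L_0}$. This requires tracking carefully how the condition ``$\tr_{E/F}c=0$'' interacts with multiplication by $\tau$, and it is where the choice of twist matters. I would verify it by writing $c(x,y)=a+\tau b$ with $a,b\in F$: one has $\tr_{E/F}c(x,\tau y)=2\tau^2 b$ and $\tr_{E/F}c(x,y)=2a$, so for $x\in L$, $q(x,L)=0$ iff $x\perp_\Fq\tau L$, iff $x\in(\tau L)^{\perp_\Fq}=\tau L$. A minor secondary point is the $\O(V)$ versus $\SO$ issue in Witt's theorem; it does not arise here because $G=\O(V)$ is the full orthogonal group.
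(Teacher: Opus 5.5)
Your proposal is correct and follows the paper's proof essentially step for step. The paper likewise first shows $L_\an=L\cap\tau L$ (Lemma \ref{lem L0}). It then splits $L=L_\an\oplus L_\rat$ with $E\cdot L_\rat\cong\overline{L}\otimes_FE$ nondegenerate and $L_\an$ a Lagrangian of $(E\cdot L_\rat)^\perp$ (Lemma \ref{lem lag}), and glues the $E$-linear extension of an $F$-isometry $L_{1,\rat}\to L_{2,\rat}$ with an isometry of the orthogonal complements carrying $L_{1,\an}$ to $L_{2,\an}$.

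The differences are cosmetic. You get $L_\an=L\cap\tau L$ from the trace computation $\Fq(x,\tau y)=\tr_{E/F}(\tau c(x,y))$ together with $(\tau L)^{\perp_\Fq}=\tau L$. The paper instead shows that $L_\an=\{x\in V\mid q(x,L)=0\}$ is an $E$-subspace, so it lies in the maximal $E$-subspace $L\cap\tau L$. You also spell out the nondegeneracy of $q|_{L_\rat}$ and the use of Witt's theorem, which the paper leaves implicit.
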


Proposition \ref{prop orbits} is a variant of \cite[Lemma 2.1]{gpsr} and  generalizes \cite[Lemma 4.3.3]{lu20b}.

\begin{eg}
\begin{enumerate}
\item If $L=\FL$, where $\FL$ is given by (\ref{equ lag1}) for $\FW$ and by (\ref{equ lag2}) for $\FV_\tau$, then $L_\an=0$.
\item If $G=\Sp(W)$ and $L=X_F\oplus\tau X_F$, then $L_\an=L$.

\end{enumerate}
\end{eg}

In the remainder of this subsection, we assume $G=\O(V)$ and prove Proposition \ref{prop orbits} in this case. The proof for $G=\Sp(W)$ follows the same lines but is simpler.

\begin{lem}\label{lem L0}
For $L\in\CX$, we have $$L_\an=L\cap\tau L,$$ and equivalently,
$$L_\an=\{x\in V\mid q(x,y)=0,\forall\ y\in L\}.$$ In particular, $L_\an$ is an isotropic $E$-subspace of $V$.
\end{lem}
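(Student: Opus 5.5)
The plan is to work directly with the relation between the $E$-bilinear form $q=\pair{\textrm{-},\textrm{-}}_V$ and the $F$-bilinear form $\Fq=\tr_{E/F}(\tau q)$ on $\FV_\tau$. Throughout, $L\in\CX$ is an $F$-subspace of $V$ with $\dim_F L=\dim_E V$ and $\Fq|_{L\times L}=0$. Since $L$ is a Lagrangian of $\FV_\tau$, we have $L=L^{\perp_\Fq}$, where $\perp_\Fq$ denotes the orthogonal complement with respect to $\Fq$. We will use two elementary facts about $\tau$.
\begin{itemize}
\item Since $\tr_{E/F}(\tau)=0$ and $\tau\neq0$, we have $\ker(\tr_{E/F})=F\tau$ and $\tau^2\in F^\times$.
\item Consequently $\tau L=\tau^{-1}L$.
\end{itemize}

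First, we recover the observation preceding the lemma. For $x,y\in L$, the vanishing $\tr_{E/F}(\tau q(x,y))=0$ gives $\tau q(x,y)\in F\tau$, and hence $q(x,y)\in F$.

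Second, for $x,y\in L$ we compute
$$\Fq(\tau x,y)=\tr_{E/F}\bigl(\tau^2 q(x,y)\bigr)=2\tau^2 q(x,y),$$
using that $\tau^2 q(x,y)\in F$. Since $\mathrm{char}\,F=0$ and $\tau^2\neq0$, this vanishes exactly when $q(x,y)=0$. Hence, for $x\in L$,
$$x\in L_\an\iff \tau x\perp_\Fq L\iff \tau x\in L^{\perp_\Fq}=L.$$
This shows $L_\an=L\cap\tau^{-1}L=L\cap\tau L$.

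For the second description, let $S=\{x\in V\mid q(x,y)=0,\ \forall\, y\in L\}$. The inclusion $L_\an\subset S$ is immediate from the definition of $L_\an$. Conversely, if $x\in S$, then $\Fq(x,y)=\tr_{E/F}(\tau q(x,y))=0$ for all $y\in L$. Thus $x\in L^{\perp_\Fq}=L$, and so $x\in L_\an$.

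Finally, $S$ is an $E$-subspace because $q$ is $E$-bilinear: if $q(x,y)=0$ then $q(ax,y)=a\,q(x,y)=0$ for all $a\in E$. Moreover $S=L_\an\subset L$, and $q(x,x)=0$ for every $x\in L_\an$ by the definition of $L_\an$. Hence $L_\an$ is isotropic for $q$.

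The only mildly delicate point is the step $\Fq(\tau x,y)=2\tau^2q(x,y)$. It needs $q(x,y)\in F$, which holds only for $x,y\in L$, so we must restrict to $x\in L$ before invoking it. The second characterization deliberately avoids this issue by using $L=L^{\perp_\Fq}$ directly.
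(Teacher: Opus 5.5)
Your proof is correct and is essentially the paper's argument: both rest on the self-orthogonality $L=L^{\perp_{\Fq}}$ and the fact that $q$ is $F$-valued on $L$, and your proof of the second characterization is the same as the paper's. The only difference is organizational. You obtain both inclusions of $L_\an=L\cap\tau L$ at once from the identity $\Fq(\tau x,y)=2\tau^2q(x,y)$ for $x,y\in L$. The paper instead proves $L\cap\tau L\subset L_\an$ via $q(x,y)\in F\cap\tau F=\{0\}$, and gets the reverse inclusion by noting that the $q$-annihilator of $L$ is an $E$-subspace of $L$, while $L\cap\tau L$ is the largest such subspace.
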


\begin{proof} We first show $L\cap\tau L\subset L_\an$.
If $x\in L\cap \tau L$, then $q(x,y)\in F$ for all $y\in L$. Write $x=\tau x_0$ with $x_0\in L$. Then $q(x,y)=\tau q(x_0,y)\in\tau F$ for all $y\in L$. Thus $q(x,y)\in F\cap\tau F=\{0\}$, so $x\in L_\an$.

Conversely, we prove $L_\an\subset L\cap\tau L$.
Since $L$ is a Lagrangian, if $x\notin L$ then there exists $y\in L$ such that $\Fq(x,y)\neq0$ and hence $q(x,y)\neq0$. This shows that $L_\an$ coincides with
	$$\{x\in V\mid q (x,y)=0,\forall\ y\in L\}.$$ This description makes it evident that $L_\an$ is an $E$-subspace of $V$. It is clear that $L\cap\tau L$ is the maximal $E$-subspace of $V$  inside $L$.  Therefore $L_\an\subset L\cap\tau L$.
\end{proof}

For any $F$-subspace $U$ of $\FV_\tau$, let $E\cdot U$ denote  the $E$-subspace of $V$ spanned by $U$; that is, $$E\cdot U=U+\tau U.$$ For each $L\in\CX$, we fix an $F$-subspace $L_\rat\subset L$ satisfying
\begin{equation}\label{equ Lra}
L=L_\an\oplus L_\rat.
\end{equation}
The subscript ``ra" stands for ``$F$-rational".

\begin{lem}\label{lem lag}
Let $L\in\CX$. Then the following hold.
\begin{enumerate}
\item As quadratic spaces over $F$, we have
$(\overline{L},q_{\overline{L}})\cong(L_\rat,q|_{L_\rat})$.
\item The subspace  $E\cdot L_\rat$ is a quadratic $E$-subspace of $V$, and there is an isomorphism $E\cdot L_\rat\cong\overline{L}\otimes_FE$ of quadratic spaces over $E$.
\item The subspace $L_\an$ is a Lagrangian of the orthogonal complement  $(E\cdot L_\rat)^\bot$ in $V$.
\end{enumerate}
\end{lem}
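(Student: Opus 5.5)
Part (1) follows directly from the definitions. Since $L=L_\an\oplus L_\rat$, the quotient map $L\to\overline{L}$ restricts to an $F$-linear isomorphism $L_\rat\xrightarrow{\sim}\overline{L}$. The form $q_{\overline{L}}$ is induced by $q|_L$, which takes values in $F$ and has radical exactly $L_\an$. Hence this isomorphism is an isometry $(L_\rat,q|_{L_\rat})\cong(\overline{L},q_{\overline{L}})$.

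For (2), I first show that $L_\rat\cap\tau L_\rat=0$. If $x=\tau y$ with $x,y\in L_\rat$, then $x\in L\cap\tau L=L_\an$ by Lemma \ref{lem L0}. Since $L_\rat\cap L_\an=0$, this gives $x=0$. It follows that the natural map $L_\rat\otimes_FE\to E\cdot L_\rat=L_\rat+\tau L_\rat$ is an isomorphism: it is surjective, and comparing $F$-dimensions ($2\dim_F L_\rat$ on both sides) makes it injective. This map is an isometry for the $E$-bilinear extension of $q|_{L_\rat}$, because $q$ is $E$-bilinear. By (1), $E\cdot L_\rat\cong\overline{L}\otimes_FE$ as quadratic spaces.

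It remains to see that this space is nondegenerate. The form $q_{\overline{L}}$ is nondegenerate, since $L_\an$ is exactly the radical of $q|_L$. Extending scalars preserves nondegeneracy, so $E\cdot L_\rat$ is a nondegenerate quadratic $E$-subspace of $V$.

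For (3), the orthogonal decomposition $V=E\cdot L_\rat\oplus(E\cdot L_\rat)^\bot$ holds by (2). By Lemma \ref{lem L0}, $L_\an$ is an isotropic $E$-subspace orthogonal to all of $L$, hence to $L_\rat$ and so to $E\cdot L_\rat$. Thus $L_\an\subset(E\cdot L_\rat)^\bot$ and is isotropic there.

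The main step is showing maximality, which I will do by a dimension count. Set $a=\dim_E L_\an$, $r=\dim_F L_\rat$ and $d=\dim_E V$. Since $L$ is a Lagrangian of $\FV_\tau$, its $F$-dimension is $d$, so
\[
\dim_F L=2a+r=d.
\]
On the other hand, $\dim_E(E\cdot L_\rat)^\bot=d-r=2a$, so an isotropic $E$-subspace of dimension $a$ is Lagrangian there.

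The same dimension count also works directly from the second description of $L_\an$ in Lemma \ref{lem L0}, namely $L_\an=(E\cdot L)^\bot$. With $E\cdot L=L_\an\oplus E\cdot L_\rat$ of $E$-dimension $a+r$, this gives $d-(a+r)=a$, which is the same identity. I expect no real obstacle beyond keeping the $E$- and $F$-dimensions straight.
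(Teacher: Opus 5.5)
Your proof is correct and follows the same route as the paper. You prove (1) from the definitions, (2) via $E\cdot L_\rat\cong L_\rat\otimes_F E$ using Lemma \ref{lem L0}, and (3) by combining isotropy of $L_\an$ with a dimension count.

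The differences are minor. You obtain $2a+r=\dim_E V$ directly from $\dim_F L=\dim_F L_\an+\dim_F L_\rat$, whereas the paper uses the exact sequence $0\to L_\an\to L\oplus\tau L\to E\cdot L\to 0$. You also supply two details the paper leaves implicit: that $L_\rat\cap\tau L_\rat=0$, and that $E\cdot L_\rat$ is nondegenerate, so $(E\cdot L_\rat)^\bot$ is a genuine complement.
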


\begin{proof}
(1) The first assertion is immediate from the definitions of $\overline{L}$ and $L_{\rat}$.

(2) By Lemma \ref{lem L0}, it is clear that
$$E\cdot L=L_\an\oplus E\cdot L_\rat\quad\textrm{and}\quad E\cdot L_\rat\cong L_\rat\otimes_FE.$$ Then the second assertion follows from the first one.

(3) Set $k=\dim_EL_\an$. From the short exact sequence 
$$0\lra L_\an\lra L\oplus\tau L\stackrel{ (x,y)\mapsto x-y}{\lra} E\cdot L\lra0,$$ we see that $$\dim_E(E\cdot L)=\dim_EV-k,$$ and $$\dim_E(E\cdot L_\rat)=\dim_EV-2k.$$ 
 Since $L_\an$ is isotropic and $\dim_EL_\an=k=\frac{1}{2}\dim_E(E\cdot L_\rat)^\bot$, the subspace $L_\an$ is a Lagrangian of $(E\cdot L_\rat)^\bot$.
\end{proof}

\begin{proof}[Proof of Proposition \ref{prop orbits}]
Let $L_1, L_2\in\CX$. 

If $gL_1=L_2$ for some $g\in G$, then clearly $gL_{1,\an}=L_{2,\an}$. Consequently, $(\overline{L}_1,q_{\overline{L}_1})\simeq(\overline{ L}_2,q_{\overline{L}_2})$. 

Suppose $(\overline{L}_1,q_{\overline{L}_1})\simeq(\overline{ L}_2,q_{\overline{L}_2})$. 
By Lemma \ref{lem lag}, the quadratic spaces $L_{1,\rat}$ and 
$ L_{2,\rat}$ are isomorphic over $F$. Choose an $F$-isomertry $g_\rat: L_{1,\rat}\ra L_{2,\rat},$ and extend it $E$-linearly to an $E$-isometry $$g_\rat: E\cdot L_{1,\rat}\lra E\cdot L_{2,\rat}.$$ Lemma \ref{lem lag} further implies that there is an $E$-isometry $$g_\an:(E\cdot L_{1,\rat})^\bot\lra(E\cdot L_{2,\rat})^\bot$$ such that $g_\an L_{1,\an}=L_{2,\an}$. Let $g=g_\an\oplus g_\rat$. Then $g\in G$ and $gL_1=L_2$. This completes the proof.
\end{proof}

\subsection{Open orbits and their stabilizers} The following characterization is standard.

\begin{lem}\label{lem open-orbit}
For $L\in\CX$, the $G$-orbit of $L$ is open in $\CX$ if and only if $L_\an=\{0\}$.
\end{lem}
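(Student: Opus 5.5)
The argument is a dimension count: compare $\dim G$, $\dim\CX$ and the dimension of the stabilizer $\Stab_G(L)$, working with the algebraic varieties over $F$. The $G$-orbit of $L$ is open in $\CX$ exactly when $\dim G-\dim\Stab_G(L)=\dim\CX$. Note that $G$ here means the $F$-group $\R_{E/F}(\O(V))$ or $\R_{E/F}(\Sp(W))$. By Proposition \ref{prop orbits} there are only finitely many $G$-orbits on $\CX$. Orbits are locally closed, and over a $p$-adic field an orbit is open if and only if it has full dimension. So it suffices to compute the stabilizer dimension as a function of $k=\dim_E L_\an$ and check that it equals $\dim G-\dim\CX$ if and only if $k=0$.

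First I compute the stabilizer. An element $g\in G$ stabilizing $L$ preserves $L_\an$, by the intrinsic definition (\ref{equ L_an}). Hence $g$ lies in the $E$-parabolic $Q=\Stab_G(L_\an)$, whose Levi factor is $\GL(L_\an)\times\O(L_\an^\perp/L_\an)$. By Lemma \ref{lem lag}, $L_\an^\perp/L_\an\cong E\cdot L_\rat\cong\overline L\otimes_F E$, and $L/L_\an=\overline L$ sits inside it as the $F$-form. The condition $gL=L$ then says two things. The image of $g$ in $\O(\overline L\otimes_FE)$ must preserve the $F$-structure $\overline L$, so it lies in $\O(\overline L)$. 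The remaining components, namely the $\GL(L_\an)$ factor and the unipotent radical, are unconstrained, since they act trivially on $L_\an^\perp/L_\an$ modulo $L_\an$. I expect the Levi step to be exact and the unipotent step to be the delicate one. Pending that check, the $F$-dimension of the stabilizer is
$$2\dim\GL_k+2\dim N_Q+\dim\O(\overline L),$$
where $N_Q$ is the unipotent radical of $Q$, taken over $E$.

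Next comes the comparison. Write $d=\dim_E V$ and $r=d-2k=\dim_F\overline L$. Then $\dim G=2\cdot\tfrac{d(d-1)}{2}$ and $\dim\CX=\tfrac{d(d-1)}{2}$, the dimension of the Lagrangian Grassmannian of $\FV_\tau$, which has $F$-dimension $2d$. So I need
$$\dim\Stab=\tfrac{d(d-1)}{2}\iff k=0.$$
For $k=0$ the stabilizer is $\O(V_F)$, of dimension $d(d-1)/2$, so the orbit is open. For $k>0$, the quantity $2\dim Q-\dim\O(V_{\overline L\otimes E})+\dim\O(\overline L)$ equals $2\dim Q-\tfrac{r(r-1)}{2}$. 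Here $\dim Q=\tfrac{d(d-1)}{2}-\tfrac{k(2d-3k-1)}{2}$ is the full group minus the opposite radical. A direct computation shows this exceeds $\tfrac{d(d-1)}{2}$ when $k>0$; indeed each step in $k$ adds a positive amount. Hence the orbit has smaller dimension and is not open.

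The main obstacle is justifying the stabilizer description, in particular that the unipotent radical of $Q$ and the $\GL(L_\an)$ factor lie entirely in $\Stab_G(L)$ and that nothing beyond them appears. Rather than describe the stabilizer fully, a cleaner route is to compute the tangent map. The orbit is open if and only if the orbit map $\mathfrak g\to T_L\CX=\Hom^{\mathrm{sym}}(L,\FV_\tau/L)$ is surjective, which amounts to comparing dimensions of $\mathfrak g/\mathfrak{stab}$. If the direct description resists, I would compute the kernel of $X\mapsto (X|_L \bmod L)$ on $\mathfrak{g}=\mathfrak{o}(V)$ as an $F$-space. That kernel consists of the $X$ with $XL\subset L$. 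Using Lemma \ref{lem L0} to identify $L_\an$ as $L\cap\tau L$ should make the dimension of this kernel transparent.

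For $\Sp(W)$ the same count applies, with $\overline L$ carrying a symplectic form that must be nondegenerate, so only $\dim\overline L$ matters.
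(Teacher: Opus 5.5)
The paper gives no proof of this lemma; it just calls it standard. Your dimension count is correct and is the natural argument. A few points would tighten it.

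\textbf{The ``pending'' stabilizer step is immediate.} It is also exactly what the paper proves afterwards, in the lemma showing $N\subset R$ and in Lemma \ref{lem stab_O}. By Lemma \ref{lem L0} you have $L_\an\subset L\subset L_\an^\perp=E\cdot L$, and any $g\in\Stab_G(L_\an)$ preserves both $L_\an$ and $L_\an^\perp$. Hence $gL=L$ if and only if the image of $g$ in $\O(L_\an^\perp/L_\an)\cong\O(\overline{L}\otimes_FE)$ preserves $\overline{L}$. So $\Stab_G(L)$ is the full preimage in $Q$ of $\GL(L_\an)\times\O(\overline{L})$, and no choice of Levi is needed.

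\textbf{The arithmetic you only asserted.} Put $d=\dim_EV$ and $k=\dim_EL_\an$. Then $\dim_F\Stab_G(L)=\frac{d(d-1)}{2}+k^2$, so the orbit has codimension exactly $k^2$ in $\CX$. The symplectic case also gives codimension $k^2$, with $\dim\CX=n(2n+1)$.

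\textbf{The criterion ``open iff of full dimension''.} This deserves a line of justification. In characteristic $0$ the stabilizer is smooth, so the kernel of the differential of the orbit map is its Lie algebra. If the dimensions match, the orbit map is a submersion at $L$ and is open by the $p$-adic implicit function theorem. If they do not, the constant rank theorem and $\sigma$-compactness of $G$ make the orbit a countable union of compact, nowhere dense pieces, so it has empty interior by Baire.

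\textbf{Two small slips.} In your fallback route, the tangent space to the Lagrangian Grassmannian of $\FV_\tau$ at $L$ consists of maps $L\to\FV_\tau/L\cong L^*$ giving \emph{alternating} forms, not symmetric ones; symmetric forms are the $\FW$ case, and alternating forms match your $\dim\CX=\frac{d(d-1)}{2}$. Also, for $k=0$ the stabilizer is $\O(L)$ for the $F$-form $L$. This equals $\O(V_F)$ only on the orbit of $\FL$, but the dimension is the same, so nothing changes.
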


Throughout this subsection we assume $L_\an=\{0\}$ and analyze the stabilizer $\Stab_G(L)$. Orbits of this type are referred to as {\em main orbits} in \cite{gpsr}.

When $L=\FL$ as in (\ref{equ lag1}) and (\ref{equ lag2}), we have $L_\an=\{0\}$ and $$\Stab_G(\FL)=\FP\cap G.$$ If $G=\Sp(W)$, Proposition \ref{prop orbits} shows that the $G$-orbit of $\FL$ is the unique orbit  satisfying $L_\an=\{0\}$. 

For $G=\Sp(W)$, we fix an element of the similitude group \begin{equation}\label{equ similitude}c_\tau\in\GSp(W)\end{equation} that acts on $X$ as scalar multiplication by $\tau$ and on $Y$ as the identity.

The next lemma is immediate.

\begin{lem}
\begin{enumerate}
\item
If $G=\Sp(W)$, then $\Stab(\FL)=\Sp(W_F)^\tau$, where $\Sp(W_F)^\tau:=c_\tau\Sp(W_F)c_\tau^{-1}$.
\item If $G=\O(V)$, then $\Stab(\FL)=\O(V_F)$.
\end{enumerate}
\end{lem}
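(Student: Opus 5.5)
The plan is to reduce both statements to one elementary fact. If $U_F$ is an $F$-form of an $E$-space $U=U_F\otimes_FE$, then an $E$-linear automorphism $g$ of $U$ satisfies $g(U_F)=U_F$ if and only if $g$ is the $E$-linear extension of an $F$-linear automorphism of $U_F$, namely $g|_{U_F}$. Moreover, when $U_F$ carries a form and $U$ carries its $E$-bilinear extension, $g$ is an isometry of $U$ exactly when $g|_{U_F}$ is an isometry of $U_F$. So in each case I will rewrite the condition $g\FL=\FL$ as the condition that a suitable (possibly conjugated) element preserves the $F$-rational subspace $V_F$ or $W_F$.

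\emph{Case (2), $G=\O(V)$.} Here $\FL=\tau V_F$. Since $g\in\O(V)$ is $E$-linear, we have $g(\tau V_F)=\tau\, g(V_F)$. Hence $g\FL=\FL$ if and only if $g(V_F)=V_F$. By the fact above, this holds if and only if $g$ is the base change of $g|_{V_F}$. Because $g$ preserves $\pair{\textrm{-},\textrm{-}}_V$, which restricts to $\pair{\textrm{-},\textrm{-}}_{V_F}$ on $V_F$, this means exactly that $g\in\O(V_F)$. The reverse inclusion $\O(V_F)\subset\Stab_G(\FL)$ is immediate.

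\emph{Case (1), $G=\Sp(W)$.} Since $\tr_{E/F}(\tau)=0$, we have $\tau^2=-\N_{E/F}(\tau)\in F^\times$, so $\tau^{-1}F=\tau F$ and hence $\tau^{-1}Y_F=\tau Y_F$. Then
$$c_\tau W_F=\tau X_F\oplus Y_F=\tau\,(X_F\oplus\tau^{-1}Y_F)=\tau\FL.$$
By $E$-linearity, $g\in\Sp(W)$ stabilizes $\FL$ if and only if it stabilizes $\tau\FL=c_\tau W_F$. This in turn holds if and only if $h:=c_\tau^{-1}gc_\tau$ stabilizes $W_F$. Since $c_\tau$ is a similitude, conjugation by it preserves $\Sp(W)$, so $h\in\Sp(W)$. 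Applying the fact above, $h$ preserves $W_F$ if and only if $h\in\Sp(W_F)$. Therefore $\Stab_G(\FL)=c_\tau\Sp(W_F)c_\tau^{-1}$.

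The only mildly delicate step is the identity $c_\tau W_F=\tau\FL$, which uses $\tau^2\in F$. Everything else is formal.
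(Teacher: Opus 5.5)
Your proof is correct and is essentially the argument the paper has in mind: the paper gives no proof and calls the lemma immediate. Your case (2) is exactly the reasoning the paper uses in the proof of Lemma~\ref{lem stab_O}, and your identity $c_\tau W_F=\tau\FL$, which uses $\tau^2\in F^\times$, is the natural way to make case (1) precise.
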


In the remainder of this subsection we assume $G=\O(V)$.

\begin{lem}
If $L_\an=\{0\}$, then $L$ is a quadratic $F$-space and, as quadratic spaces over $E$, $$L\otimes_FE\cong V.$$  In this situation, $\Stab_G(L)=\O(L)$. 
\end{lem}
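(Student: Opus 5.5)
The plan is to apply Lemma \ref{lem lag} in the degenerate case $L_\an=\{0\}$, and then identify the stabilizer directly.

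First, since $L_\an=\{0\}$, the decomposition (\ref{equ Lra}) forces $L_\rat=L$ and $\overline{L}=L$. The remark preceding Proposition \ref{prop orbits} says that $q$ restricted to $L$ takes values in $F$, so $(L,q|_L)$ is a quadratic $F$-space. It is nondegenerate, because its radical is exactly $L_\an=\{0\}$. By Lemma \ref{lem lag}(1)--(2), $E\cdot L$ is a quadratic $E$-subspace of $V$ isomorphic to $L\otimes_FE$. By the dimension count in the proof of Lemma \ref{lem lag}(3) with $k=0$, we have $\dim_E(E\cdot L)=\dim_EV$, so $E\cdot L=V$. Concretely, the natural map $L\otimes_FE\to V$, $x\otimes e\mapsto ex$, is surjective, and both sides have $E$-dimension $\dim_FL=\frac12\dim_F\FV_\tau=\dim_EV$, so it is an isomorphism. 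It is an isometry because $q$ is $E$-bilinear and its restriction to $L$ is $q|_L$. This proves $L\otimes_FE\cong V$.

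For the stabilizer, the isomorphism $L\otimes_FE\cong V$ gives an $F$-rational structure $L$ on $V$, so $\O(L)$ embeds in $\O(V)$ by $E$-linear extension, and this extension clearly preserves $L$. Hence $\O(L)\subset\Stab_G(L)$. Conversely, let $g\in\O(V)$ with $gL=L$. Then $g|_L$ is an $F$-linear automorphism of $L$ preserving $q|_L$, so it lies in $\O(L)$. Since $g$ is $E$-linear and $L$ spans $V$ over $E$, $g$ is the $E$-linear extension of $g|_L$. So the restriction map $\Stab_G(L)\to\O(L)$ is a bijection that is inverse to the extension map, and $\Stab_G(L)=\O(L)$.

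I do not expect a real obstacle. The only point needing care is checking that $L$ spans $V$ over $E$, with $L\cap\tau L=0$ coming from Lemma \ref{lem L0}, so that the dimension count goes through.
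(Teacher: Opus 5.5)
Your proposal is correct and takes essentially the same route as the paper: Lemma \ref{lem lag} with $L_\an=\{0\}$ gives $E\cdot L=V$ and hence $L\otimes_FE\cong V$, and then $\Stab_G(L)=\O(L)$ follows from $E$-linear extension in one direction and restriction to $L$ in the other. Beyond the paper's argument, you also check the non-degeneracy of $q|_L$, do the dimension count, and note that an $E$-linear $g$ is determined by $g|_L$ because $L$ spans $V$ over $E$.
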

\begin{proof}
The condition $L_{\an}=\{0\}$ implies $V=E\cdot L$, which yields the isomorphism $L\otimes_FE\cong V$  by Lemma \ref{lem lag}. The natural inclusion $\O(L)\incl \O(V)=G$ shows that $\O(L)\subset\Stab_G(L)$. 
Conversely, any $g\in\Stab_G(L)$ is an isometry of $V$ and preserves $L$. Therefore $g\in\O(L)$. 
Hence $\Stab_G(L)=\O(L)$. 
\end{proof}

Let $\RH^1(E/F,\O(V))$ be the first Galois cohomology set of $\O(V)$ whose $F$-rational structure is induced from $\O(V_F)$ and on which $\Gal(E/F)$ acts naturally. Explicitly, $$\RH^1(E/F,\O(V))=\left\{g\in\O(V)\mid g\iota(g)=1\right\}/\sim,$$ where $g\sim g'$ if and only if there exists $h\in\O(V)$ such that $g'=hg\iota(h)^{-1}$. This set classifies  isomorphism classes of quadratic spaces $V'_F$ over $F$ whose base change $V'=V_F'\otimes_FE$ is isomorphic to $V$.

\begin{lem}\label{lem orbits-coho1}
The $G$-orbits of $L$ with $L_\an=\{0\}$ are in bijection with  
$\RH^1(E/F,\O(V))$.
\end{lem}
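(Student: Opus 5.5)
We will prove Lemma \ref{lem orbits-coho1} by identifying both sides with the set of isomorphism classes of quadratic $F$-spaces $V'_F$ such that $V'_F\otimes_FE\cong V$. For the cohomology side this is the classification recalled just before the statement (twisting the $F$-structure $V_F$ by a cocycle), so the work is on the orbit side.

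Consider the map sending $L\in\CX$ with $L_\an=\{0\}$ to the isomorphism class of $(L,q|_L)$. By the preceding lemma, $L$ is a quadratic $F$-space with $L\otimes_FE\cong V$, so the map lands in the right set. By Proposition \ref{prop orbits}(2), two Lagrangians with $L_\an=0$ lie in the same $G$-orbit if and only if $(\overline{L},q_{\overline{L}})=(L,q|_L)$ are isomorphic. Hence the map is well defined and injective on orbits.

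For surjectivity, let $V'_F$ be a quadratic $F$-space and fix an $E$-isometry $\phi\colon V'_F\otimes_FE\to V$. Set $L:=\phi(\tau V'_F)$. First, $L$ is an $F$-subspace of dimension $\dim_E V=\tfrac12\dim_F\FV_\tau$. Second, for $x,y\in V'_F$ we have $\pair{\tau x,\tau y}_{V_\tau}=\tau\cdot\tau^2 q(x,y)=\tau^3q(x,y)$. Since $\tau^2\in F$, this lies in $\tau F$, so its trace vanishes. Thus $L$ is isotropic in $\FV_\tau$, hence a Lagrangian, i.e.\ $L\in\CX$.

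Finally, we need $L_\an=\{0\}$. By Lemma \ref{lem L0}, $L_\an=L\cap\tau L$, which equals $\phi(\tau V'_F\cap \tau^2V'_F)=\phi(\tau(V'_F\cap\tau V'_F))=0$. Its induced form is $q|_L$, which is isometric to $V'_F$ scaled by $\tau^2\in F^\times$. This is a subtlety. To hit the class of $V'_F$ exactly, we instead take $L:=\phi(V'_F)$ and check isotropy in $\FV_\tau$. Indeed, for $x,y\in V'_F$, $\tr_{E/F}(\tau q(x,y))=q(x,y)\tr_{E/F}(\tau)=0$, since $q(x,y)\in F$. This choice is compatible with the normalization $\FL=\tau V_F$ only up to the harmless scaling; either way the rescaling by $\tau^2$ is a bijection on the relevant set of classes, because $V'_F\otimes E\cong V$ is preserved under scaling by $\tau^2$ (a square in $E$). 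So surjectivity holds with either convention. I expect this bookkeeping of the form $q_{\overline L}$ versus the twist to be the only delicate point. Composing with the classification of $\RH^1(E/F,\O(V))$ gives the bijection.
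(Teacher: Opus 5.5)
Your proof is correct, but it is organized differently from the paper's. The paper works at the level of cocycles. It attaches to $g$ with $g\iota(g)=1$ the Lagrangian $L^g=\{v\in V\mid g\iota(v)=v\}$ and checks that $g\mapsto L^g$ descends to cohomology classes. It proves surjectivity by choosing an isometry $\phi\colon L\otimes_FE\to V$ and setting $g=\phi\circ\iota(\phi)^{-1}$, and leaves injectivity as ``routine''.

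You instead factor the bijection through the set of $F$-forms $V'_F$ of $V$:
\begin{itemize}
\item well-definedness and injectivity on orbits come directly from Proposition \ref{prop orbits}(2), since $\overline{L}=L$ when $L_\an=0$;
\item the preceding lemma ($L\otimes_FE\cong V$) puts the image in the right set;
\item surjectivity comes from $L=\phi(V'_F)$;
\item the cohomology side is the standard descent classification recalled just before the lemma.
\end{itemize}
Your $\phi(V'_F)$ is exactly the paper's $L^g$ for $g=\phi\circ\iota(\phi)^{-1}$, so the two arguments meet in the middle. Yours is more economical given Proposition \ref{prop orbits} and needs no separate injectivity check. The paper's gives an explicit cocycle-level formula for the bijection, implicitly re-deriving the descent classification in the language of Lagrangians.

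Your side remark about $\tau V'_F$ is also correct and worth keeping. The form on $\tau V'_F$ is that of $V'_F$ scaled by $\tau^2\in F^\times\cap (E^\times)^2$, and such scaling permutes the $F$-forms of $V$. Hence the representatives $X_k\oplus\tau V'_{2m-2k,F}$ that the paper uses later in Section \ref{subsec deg-principal-II} still give a valid parametrization. In either normalization the stabilizer is $\O(V'_F)$, because elements of $\O(V)$ are $E$-linear.
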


\begin{proof}
The explicit parametrization is given in the following way. Let a cohomology class be represented by a cocycle  $g\in\O(V)$ with $g\iota(g)=1$. Set $$L^g:=\left\{v\in V\mid g\cdot\iota(v)=v\right\}.$$ It is straightforward to verify that $L^g\in\CX$ and $L^g_\an=\{0\}$. If $g'=hg\iota(h)^{-1}$ for some $h\in\O(V)$, then $L^{g'}=hL^g$. Thus the correspondence $g\mapsto L^g$ descends to a well-defined map from $\RH^1(E/F,\O(V))$ to the set of $G$-orbits of such Lagrangians. 

To see surjectivity, take any $L\in\CX$ with $L_\an=\{0\}$. By Lemma \ref{lem lag}, there exists an $E$-isometry $\phi:L\otimes_FE\ra V$. Define $g=\phi\circ\iota(\phi)^{-1}\in\O(V)$. Then $g\iota(g)=1$ and the associated Lagrangian $L^g$ equals $L$. It is routine to check the injectivity. Hence we obtain the desired bijection.
\end{proof}

\subsection{Negligible orbits and their stabilizers} Throughout this subsection we assume $L_\an\neq\{0\}$.  Our goal is to show that $L$ is {\em negligible} in the terminology of \cite{gpsr}; that is, $\Stab_G(L)$ contains the unipotent radical $N$ of a proper parabolic subgroup of $G$ as a normal subgroup. 

Denote $$R:=\Stab_G(L)\quad\textrm{and}\quad P:=\Stab_G(L_\an).$$ Lemma \ref{lem lag} implies that $P$ is a proper parabolic subgroup of $G$. Let $N$ be the unipotent radical of $P$.

\begin{lem}
If $L_\an\neq\{0\}$, then $N$ is a normal subgroup of $R$.
\end{lem}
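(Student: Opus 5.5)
The plan is to prove two things: first that $N\subset R$, i.e.\ that every element of the unipotent radical of $P=\Stab_G(L_\an)$ stabilizes $L$; and then that $N$ is normal in $R$. The second part is quick. We have $R\subset P$, because any $g\in R$ preserves $L$ and hence preserves $L_\an$, which is defined intrinsically from $L$ and $q$ (equivalently $L_\an=L\cap\tau L$ by Lemma \ref{lem L0}). Since $N$ is normal in $P$, it is normalized by $R$. So everything reduces to showing $N\subset R$.

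For the containment, set $k=\dim_E L_\an$ and $U=E\cdot L_\rat$. By Lemma \ref{lem lag}, $U$ is nondegenerate and $L_\an$ is a Lagrangian of $U^\perp$. Choose an isotropic $E$-subspace $L_\an'\subset U^\perp$ in duality with $L_\an$, so that
$$V=L_\an\oplus U\oplus L_\an',\qquad L_\an^\perp=L_\an\oplus U.$$
The unipotent radical $N$ of $P$ acts trivially on $L_\an^\perp/L_\an$ and trivially on $L_\an$. In particular, for every $n\in N$ and $u\in U$ we have $n u-u\in L_\an$. Since $L=L_\an\oplus L_\rat$ with $L_\rat\subset U$, for $x=a+r$ ($a\in L_\an$, $r\in L_\rat$) we get
$$nx = a + r + (nr-r)\in L_\an+L_\rat = L.$$
So $nL\subset L$, which gives $N\subset R$.

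The only point that needs care is the inclusion $L\subset L_\an^\perp$, where $\perp$ is taken with respect to $q$. This is exactly the second description in Lemma \ref{lem L0}: $L_\an=\{x\in V\mid q(x,y)=0,\ \forall\, y\in L\}$, so $q(L_\an,L)=0$. This is what allows $N$, which only moves vectors of $L_\an^\perp$ by elements of $L_\an$, to preserve $L$. The standard description of $N$, namely that $N$ acts trivially on the graded pieces of the flag $L_\an\subset L_\an^\perp\subset V$, is immediate from the block-matrix form in the decomposition above. I expect no real obstacle; the symplectic case is handled identically.
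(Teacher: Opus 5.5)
Your proof is correct and follows essentially the paper's argument: you reduce to $N\subset R$ via $R\subset P$, then use that $N$ acts trivially on $L_\an^\perp/L_\an\supset L/L_\an$, so $gx-x\in L_\an$ for every $x\in L$. The only difference is cosmetic: the paper then checks $q(gx,y)=q(x,y)\in F$ (i.e.\ $\Fq(gL,L)=0$) and invokes maximality of the Lagrangian $L$, whereas you note directly that $L_\an\subset L$ gives $gx\in L$, which is slightly more efficient.
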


\begin{proof} We prove the lemma for $G=\O(V)$; the case $G=\Sp(W)$ is similar and omitted.

It is clear that $R\subset P$. Thus it suffices to show $N\subset R$. The unipotent radical $N$ is given explicitly by
$$N=\left\{g\in G\mid g|_{L_\an}=\id,\ g|_{L_\an^{\bot}/L_\an}=\id \right\}.$$ 
Take $g\in N$. Since $L\in\CX$ is a Lagrangian, to prove $g\in R$ it is enough to verify $\Fq(gL,L)=0$,  i.e. $q(gL,L)\subset F.$ By Lemma \ref{lem lag}, $L_\an^{\bot}=E\cdot L$ and $L/L_\an\subset L_\an^{\bot}/L_\an.$ Because $g$ acts trivially on $L_\an^\bot/L_\an$, it also acts trivially on the subspace $L/L_\an$. Hence for any $x\in L$ we can write $gx=x+a$ with $a\in L_\an$. For arbitrary $x,y\in L$, the fact that $a\in L_\an$ implies $q(a,y)=0$. Thus
$$q(gx,y)=q(x+a,y)=q(x,y)\in F.$$ This establishes $N\subset R$.
\end{proof}

\subsection{Explicit orbits and stabilizers}\label{subsec explicit orbit}
Let $r$ be the split rank of $G$ (equivalently, the Witt index of  $W$ or $V$).  For $0\leq k\leq r$, set
\begin{equation}\label{equ X_k}\CX_k:=\left\{L\in\CX\mid\dim_EL_\an=k\right\}.\end{equation} Each $\CX_k$ is $G$-stable, and $\CX$ admits the decomposition into  disjoint $G$-orbits
\begin{equation}\label{equ disj}\CX=\bigsqcup_{k=0}^{r}\CX_k.\end{equation}

\subsubsection{The symplectic case} Assume $G=\Sp(W)$ and $\dim(W)=2n$. By Proposition \ref{prop orbits}, each $\CX_k$ is a single $G$-orbit.  We now construct an explicit representative $L_k\in\CX_k$ and determine its stabilizer $R=\Stab_G(L_k)$.

Write $$X_F=Fe_1\oplus\cdots\oplus Fe_n,\quad Y_F=Fe'_1\oplus\cdots\oplus Fe'_n,$$ where $\{e'_1,...,e'_n\}$ is the basis dual to $\{e_1,...,e_n\}$ under the symplectic form $q$. Define the following subspaces:
$$\begin{aligned}&X_{F,k}:=Fe_1\oplus\cdots\oplus Fe_k,\quad X_k:=X_{F,k}\otimes_FE,\\ 
&X'_{F,n-k}:=Fe_{k+1}\oplus\cdots\oplus Fe_n,\quad Y'_{F,n-k}:=Fe'_{k+1}\oplus\cdots\oplus Fe'_n,\\ 
&W_{2n-2k,F}:=X'_{F,n-k}\oplus Y'_{F,n-k},\quad W_{2n-2k}:=W_{F,2n-2k}\otimes_FE.\end{aligned}$$
Now set \begin{equation}\label{equ lag-k}L_k:=X_k\oplus X'_{F,n-k}\oplus \tau Y'_{F,n-k}.\end{equation} A direct verification shows $L_k\in\CX$ and $L_{k,\an}=X_k$, so indeed $L_k\in\CX_k$. In terms of the above decomposition, the parabolic $P=\Stab_G(L_{k,\an})$ is
$$P=\left(\GL(X_k)\times\Sp(W_{2n-2k})\right)\ltimes N.$$

\begin{lem}\label{lem stab_Sp}
For $L_k$ as in (\ref{equ lag-k}), its stablizer is
$$R=\left(\GL(X_k)\times\Sp(W_{2n-2k,F})^\tau\right)\ltimes N.$$ 
\end{lem}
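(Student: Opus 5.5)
We prove the two inclusions $R\subset(\GL(X_k)\times\Sp(W_{2n-2k,F})^\tau)\ltimes N$ and $\supset$ separately. The basic observation is the one used for the orthogonal case in Lemma~\ref{lem lag} and the lemma on $N$ (stated for $\O(V)$, with the symplectic case declared similar). In the symplectic setting, $L_{k,\an}=X_k$ and $E\cdot L_k=X_k\oplus W_{2n-2k}=X_k^\perp$. Moreover, $L_{k,\rat}=X'_{F,n-k}\oplus\tau Y'_{F,n-k}$ maps isomorphically onto its image $\overline{L}_k$ in $X_k^\perp/X_k\cong W_{2n-2k}$. That image is precisely the Lagrangian $\FL_{n-k}:=X'_{F,n-k}\oplus\tau Y'_{F,n-k}$ of the doubled space $\R_{E/F}(W_{2n-2k})$, which is of the form (\ref{equ lag1}) for $W_{2n-2k,F}$.

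For $R\subset P$: any $g\in R$ preserves $L_k$ and the form $q$, hence preserves $L_{k,\an}=X_k$, which is defined intrinsically from $L_k$ and $q$. So $R\subset P$. Under the identification $P/N\cong\GL(X_k)\times\Sp(W_{2n-2k})$, the second component of $g$ is its action on $X_k^\perp/X_k\cong W_{2n-2k}$. This action preserves the image of $L_k$, namely $\FL_{n-k}$. By the lemma computing $\Stab(\FL)$ in the symplectic case, applied to $W_{2n-2k,F}$ in place of $W_F$, this forces the $\Sp$-component to lie in $\Sp(W_{2n-2k,F})^\tau$. No condition arises on the $\GL(X_k)$-component. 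Hence $R\subset(\GL(X_k)\times\Sp(W_{2n-2k,F})^\tau)\ltimes N$.

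For the reverse inclusion: $N\subset R$ follows from the argument of the lemma on $N$, transcribed verbatim to the symplectic case. Elements of $N$ act trivially on $X_k^\perp/X_k$, so they move each $x\in L_k$ by an element of $X_k\subset L_k$. It then remains to show that the Levi subgroup $\GL(X_k)\times\Sp(W_{2n-2k,F})^\tau$, embedded in $P$ by acting on $X_k$, on $W_{2n-2k}$, and dually on $Y_k:=E e_1'\oplus\cdots\oplus E e'_k$, preserves $L_k=X_k\oplus\FL_{n-k}$. This is immediate: $\GL(X_k)$ preserves $X_k$ and fixes $W_{2n-2k}$ pointwise, while $\Sp(W_{2n-2k,F})^\tau$ fixes $X_k$ and preserves $\FL_{n-k}$ by the same lemma. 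Together with $N\subset R$, this gives equality. Since $N$ is normal in $P$, it is normal in $R$, and the product is semidirect.

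The only step that needs care is the identification of the image of $R$ in $P/N$. One has to check that the condition ``$g$ preserves $L_k$'' is equivalent to ``the Levi image preserves $\overline{L}_k$ in $X_k^\perp/X_k$''. For this, note that $L_k\supset X_k$ and $L_k$ is the full preimage of $\FL_{n-k}$ under $X_k^\perp\to X_k^\perp/X_k$. Therefore $g\in P$ preserves $L_k$ if and only if its induced action on $X_k^\perp/X_k$ preserves $\FL_{n-k}$. This equivalence gives both inclusions at once.
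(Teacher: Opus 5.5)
Your proof is correct and follows essentially the same route as the paper's, which refers to the argument for Lemma~\ref{lem stab_O}: show $R\subset P$ and $N\subset R$, then determine the Levi part as the stabilizer of $X'_{F,n-k}\oplus\tau Y'_{F,n-k}$ in $\Sp(W_{2n-2k})$, namely $\Sp(W_{2n-2k,F})^\tau$. Your observation that $L_k$ is the full preimage of its image in $X_k^\perp/X_k$ is a slightly cleaner packaging, since it gives both inclusions (including $N\subset R$) in one step.
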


\begin{proof}
The proof is identical to that of Lemma \ref{lem stab_O} below and is therefore omitted.
\end{proof}

\subsubsection{The orthogonal case} Now assume $G=\O(V)$ and $\dim V=m$. Let $r$ be the Witt index of $V$. Choose a maximal isotropic $E$-subspace $X_r\subset V$ with basis $e_1,...,e_{r}$, and let $e'_1,...,e'_r$ be the dual basis with respect to the quadratic form $q$. Set  $Y_r=Ee_1'\oplus\cdots\oplus Ee_r'$.
There exists $L\in\CX$ such that $X_{r}\subset L$. It is clear that $L_\an=X_r$ and $E\cdot L_\rat$ is anisotropic. 
 For $0\leq k\leq r$, define
$$X_k:=Ee_1\oplus\cdots\oplus Ee_k$$ and 
$$V_{m-2k,F}:=\left(Fe_{k+1}\oplus\cdots\oplus Fe_{r}\right)\bigoplus L_\rat\bigoplus\left(Fe'_{k+1}\oplus\cdots\oplus Fe'_{r}\right).$$ Now set
\begin{equation}\label{equ lag-k2}L_k:=X_k\oplus \tau V_{m-2k,F}.\end{equation}
A direct computation shows $$L_{k,\an}=X_k,\quad L_{k,\rat}=\tau V_{m-2k,F},$$ and therefore $L_k\in\CX_k$. The parabolic $P=\Stab_G(L_{k,\an})$ is
$$P=\left(\GL(X_k)\times\O(V_{m-2k})\right)\ltimes N,$$
where $V_{m-2k}:=V_{m-2k,F}\otimes_FE$.

\begin{lem}\label{lem stab_O}
For $L_k$ as in (\ref{equ lag-k2}), its stabilizer is 
$$R=\left(\GL(X_k)\times\O(V_{m-2k,F})\right)\ltimes N.$$
\end{lem}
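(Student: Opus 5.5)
We need to show that $R=\Stab_G(L_k)$ equals $\left(\GL(X_k)\times\O(V_{m-2k,F})\right)\ltimes N$. Here the Levi factor of $P$ is taken with respect to $V=X_k\oplus V_{m-2k}\oplus Y_k$, where $Y_k=Ee'_1\oplus\cdots\oplus Ee'_k$. We prove the two inclusions separately.

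\emph{The easy inclusion.} The preceding lemma (for $L_\an\neq\{0\}$) already gives $N\subset R$. Since $L_{k,\an}=X_k$ is intrinsic to $L_k$, any element stabilizing $L_k$ stabilizes $X_k$, so $R\subset P$. Next take $(a,h)$ with $a\in\GL(X_k)$ and $h\in\O(V_{m-2k,F})$, acting through the Levi. This element preserves $X_k$. Since $h$ is $E$-linear and commutes with $\tau$, it also preserves $\tau V_{m-2k,F}$. Hence it preserves $L_k=X_k\oplus\tau V_{m-2k,F}$, and so $\GL(X_k)\times\O(V_{m-2k,F})\subset R$.

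\emph{The reverse inclusion.} Let $g\in R$. Because $R\subset P=M\ltimes N$ and $N\subset R$, we may write $g=m\nu$ with $\nu\in N$, and then $m=g\nu^{-1}\in R\cap M$. It therefore suffices to show that $R\cap M\subset\GL(X_k)\times\O(V_{m-2k,F})$. Write $m=(a,h)$ with $a\in\GL(X_k)$ and $h\in\O(V_{m-2k})$.

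The key point is that $m$ preserves $Y_k$, whereas $g$ need not. Take $v\in V_{m-2k,F}$. Then $m(\tau v)\in L_k$, so $m(\tau v)=x+\tau w$ with $x\in X_k$ and $w\in V_{m-2k,F}$. On the other hand, $m$ preserves $V_{m-2k}$, so $m(\tau v)=\tau h(v)\in V_{m-2k}$. Projecting onto the $X_k$-component in the decomposition $V=X_k\oplus V_{m-2k}\oplus Y_k$ gives $x=0$. Therefore $h(\tau V_{m-2k,F})\subset\tau V_{m-2k,F}$, which means $h$ preserves $V_{m-2k,F}$. Since $h$ is an isometry of $V_{m-2k}$, it follows that $h\in\O(V_{m-2k,F})$. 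No condition is imposed on $a$.

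Combining the two inclusions yields the stated decomposition. The only delicate step is the projection argument in the reverse inclusion. It requires that the Levi decomposition of $P$ be chosen compatibly with the $F$-structure, namely that $V_{m-2k}=V_{m-2k,F}\otimes E$ is the complement $(X_k\oplus Y_k)^\perp$. This holds by construction, since $L_\rat\perp e_i,e'_i$ for $i\le r$.
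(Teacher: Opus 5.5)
Your proof is correct and follows the same route as the paper: you use $N\subset R\subset P$ to reduce to $R=(R\cap M)\ltimes N$, and then check that a Levi element $(a,h)$ stabilizes $L_k=X_k\oplus\tau V_{m-2k,F}$ exactly when $h$ preserves $V_{m-2k,F}$. Your projection onto the $X_k$-component of $V=X_k\oplus V_{m-2k}\oplus Y_k$ just makes explicit the comparison the paper carries out when it writes $mL_k=X_k\oplus m_\rat\cdot\tau V_{m-2k,F}$.
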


\begin{proof}
Since $R\subset P=MN$, where $M=\GL(X_k)\times\O(V_{m-2k})$, and we have already shown $N\subset R$, it follows that
$$R=(R\cap M)\ltimes N.$$ Take $m=(m_\an,m_\rat)\in M$ with $m_\an\in\GL(X_k)$ and $m_\rat\in\O(V_{m-2k})$. Then $$m L_k=m_\an X_k\oplus m_\rat L_{k,\rat}=X_k\oplus m_\rat\cdot \tau V_{m-2k,F}.$$ Hence $mL_k=L_k$ if and only if $m_\rat V_{m-2k,F}=V_{m-2k,F}$, which is equivalent to $m_\rat\in\O(V_{m-2k,F})$. This completes the proof.
\end{proof}

The following lemma is a generalization of Lemma \ref{lem orbits-coho1}.

\begin{lem}\label{lem orbits-coho2}
For each $k$, the $G$-orbits in $\CX_k$ are parametrized by $\RH^1(E/F,\O(V_{m-2k}))$, where the $F$-structure on $\O(V_{m-2k})$ is inherited from $\O(V_{m-2k,F})$.
\end{lem}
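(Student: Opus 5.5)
We reduce to Lemma \ref{lem orbits-coho1}, applied to the smaller space $V_{m-2k}$, by using the invariant of Proposition \ref{prop orbits}. Take $L\in\CX_k$. By Proposition \ref{prop orbits}, the $G$-orbit of $L$ is determined by the isomorphism class of $(\overline{L},q_{\overline{L}})$, a quadratic $F$-space of dimension $m-2k$. By Lemma \ref{lem lag}(2), its base change satisfies $\overline{L}\otimes_FE\cong E\cdot L_\rat$. This space is nondegenerate of dimension $m-2k$.

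The first step is to show $E\cdot L_\rat\cong V_{m-2k}$ as quadratic $E$-spaces. By Lemma \ref{lem lag}(3), $L_\an$ is a Lagrangian of $(E\cdot L_\rat)^\bot$, so $(E\cdot L_\rat)^\bot$ is split of dimension $2k$. Thus
$$V\cong \BH^k\oplus E\cdot L_\rat,$$
where $\BH$ denotes the hyperbolic plane. Likewise $V=(X_k\oplus Y_k)\oplus V_{m-2k}$, with $Y_k$ spanned by $e'_1,\dots,e'_k$. Witt cancellation then gives $E\cdot L_\rat\cong V_{m-2k}$. Consequently, $L\mapsto(\overline{L},q_{\overline{L}})$ sends the $G$-orbits in $\CX_k$ injectively into the set of isomorphism classes of quadratic $F$-spaces $U_F$ with $U_F\otimes_FE\cong V_{m-2k}$. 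As recalled before Lemma \ref{lem orbits-coho1}, this set is classified by $\RH^1(E/F,\O(V_{m-2k}))$, where the Galois action is the one coming from the $F$-structure $V_{m-2k,F}$.

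The remaining step, which I expect to be the only real work, is surjectivity: every such $U_F$ must arise from some $L\in\CX_k$. Given $U_F$, fix an $E$-isometry $\phi:U_F\otimes_FE\to V_{m-2k}$. Set $L_U:=\phi(U_F)$, and put $L:=X_k\oplus\tau L_U$ inside $V=X_k\oplus Y_k\oplus V_{m-2k}$, modelled on (\ref{equ lag-k2}). Three things must be checked.

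First, $L$ is $\Fq$-isotropic. The form $q$ vanishes on $X_k$, which is orthogonal to $V_{m-2k}$. On $\tau L_U$ we have $\tau q(\tau x,\tau y)=\tau^3q(x,y)$, and $\tau^3\in\tau F$ has trace zero, so $\tr_{E/F}$ kills these values.

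Second, $\dim_FL=2k+(m-2k)=m=\frac12\dim_F\FV_\tau$, so $L$ is a Lagrangian of $\FV_\tau$.

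Third, $L_\an=X_k$. Here $L\cap\tau L=X_k\oplus(\tau L_U\cap L_U)$, and $\tau L_U\cap L_U=0$ because $L_U$ is an $F$-form of $V_{m-2k}$. Lemma \ref{lem L0} then gives the claim.

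Hence $L\in\CX_k$ and $\overline{L}\cong(\tau L_U,q)$. This space is the $\tau^2$-scaling of $U_F$, with $\tau^2\in F^\times$. To land on $U_F$ itself, start instead from $U_F$ scaled by $\tau^{-2}$; its base change is isometric to $V_{m-2k}$ because $\tau^{-2}$ is a square in $E$.

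Alternatively, one can transcribe the cocycle construction $L^g=\{v\mid g\iota(v)=v\}$ of Lemma \ref{lem orbits-coho1} inside $V_{m-2k}$, twisted by $\tau$, and add $X_k$. The check that this is well defined on classes is the same as before, since $h\in\O(V_{m-2k})\subset M\subset G$ carries $X_k\oplus\tau L^g$ to $X_k\oplus\tau L^{hg\iota(h)^{-1}}$.
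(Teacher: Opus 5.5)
Your proposal is correct and is essentially the argument the paper leaves implicit (it states the lemma without proof, as a generalization of Lemma \ref{lem orbits-coho1}): Proposition \ref{prop orbits} identifies the orbit with the class of $\overline{L}$, Witt cancellation shows $\overline{L}$ is an $F$-form of $V_{m-2k}$, and every class is realized by a representative $X_k\oplus\tau V'_{m-2k,F}$ modelled on (\ref{equ lag-k2}), which is exactly the parametrization used later in Section \ref{subsec deg-principal-II}. Your remark that $\overline{L}$ for such a representative is the $\tau^2$-scaling of $V'_{m-2k,F}$ is a detail the paper glosses over. Since scaling by $\tau^2$ is a bijection on the $F$-forms of $V_{m-2k}$, you handle this correctly, and the parametrization is still a bijection.
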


\section{Weil representations}\label{sec weil}

 \subsection{Dual pairs} From now on we fix
 $$\dim V_F=2m,\quad\dim W_F=2n.$$
Let $\BW:=V\otimes_EW$, equipped with the symplectic form $$\pair{\textrm{-},\textrm{-} }_\BW=\pair{\textrm{-},\textrm{-}}_V\otimes\pair{\textrm{-},\textrm{-} }_W.$$ Then $\left(\O(V),\Sp(W)\right)$ is a reductive dual pair inside $\Sp(\BW)$. 
Set $\BW^\square:=\R_{E/F}(\BW)$ and define its symplectic form by $$\pair{\textrm{-},\textrm{-}}_{\BW^\square}:=\tr_{E/F}\circ\pair{\textrm{-},\textrm{-}}_\BW.$$ This yields a natural embedding $$\Sp(\BW)\lhra\Sp(\BW^\square).$$  
The space $\BW^\square$ can be canonically identified in two ways: $$\BW^\square=\FV\otimes_F W_F=V_F\otimes_F\FW,$$ and under these identifications the symplectic form becomes
$$\pair{\textrm{-},\textrm{-}}_{\BW^\square}=\pair{\textrm{-},\textrm{-}}_\FV\otimes\pair{\textrm{-},\textrm{-} }_{W_F}=\pair{\textrm{-},\textrm{-} }_{V_F}\otimes\pair{\textrm{-},\textrm{-}}_\FW.$$
Hence $(\O(\FV),\Sp(W_F))$ and $(\O(V_F),\Sp(\FW))$ are dual pairs in $\Sp(\BW^\square)$. 

The dual pairs $\left(\O(V),\Sp(W)\right)$ and $(\O(V_F),\Sp(\FW))$ form \emph{base change seesaw pairs} (cf. \cite[Sect. 15.1.4]{gkt}) in $\Sp(\BW^\square)$. The corresponding seesaw diagram is
\begin{equation}\label{equ seesaw pair1}\begin{tikzcd}
\O(V) \arrow[d, no head] \arrow[rd, no head] & \Sp(\FW) \arrow[d, no head] \\
\O(V_F) \arrow[ru, no head]                      & \Sp(W).                
\end{tikzcd}\end{equation}

Similarly, we define the symplectic spaces
$$\BW_\tau=V_\tau\otimes_EW,\quad \BW^\square_\tau=\R_{E/F}(\BW_\tau).$$ Then $\left(\O(V_\tau),\Sp(W)\right)$ and $\left(\O(\FV_\tau),\Sp(W_F)\right)$ are base change seesaw pairs in $\Sp(\BW_\tau)$, with seesaw diagram 
\begin{equation}\label{equ seesaw pair2}\begin{tikzcd}
\O(\FV_\tau) \arrow[d, no head] \arrow[rd, no head] & \Sp(W) \arrow[d, no head] \\
\O(V_\tau) \arrow[ru, no head]                      & \Sp(W_F).                 
\end{tikzcd}\end{equation}

\subsection{Metaplectic groups and splittings}

\subsubsection{}\label{subsec splitting} We denote by $\Mp(\BW)$ the metaplectic 
$\BC^1$-cover of $\Sp(\BW)$, and by $\Mp(\BW^\square)$ the metaplectic $\BC^1$-cover of $\Sp(\BW^\square)$ (cf. \cite[Sect. 2.1]{gkt}); their explicit realizations (via the Schrodinger models) will be given below.
Here $\BC^1:=\{z\in\BC\mid |z|=1\}$.

Recall that  $W_F=X_F\oplus Y_F$ is a complete polarization of the symplectic space $W_F$. We extend this polarization to the various symplectic spaces as follows:
$$\begin{aligned}
&X:=X_F\otimes_FE,\quad Y:=Y_F\otimes_FE, \\
&\FX:=\R_{E/F}(X),\quad \FY:=\R_{E/F}(Y),\\ 
&\BX:=X\otimes_EV,\quad \BY:=Y\otimes_EV,
\end{aligned}$$ and
$$\begin{aligned}&\BX^\square:=\R_{E/F}(\BX)=X_F\otimes_F\FV=\FX\otimes_F V_F,\\
&\BY^\square:=\R_{E/F}(\BY)=Y_F\otimes_F\FV=\FY\otimes_FV_F.\end{aligned}$$
 Then $$\begin{aligned}&W=X\oplus Y,\quad \FW=\FX\oplus\FY,\\ &\BW=\BX\oplus\BY,\quad \BW^\square=\BX^\square\oplus\BY^\square\end{aligned}$$ are complete polarizations of $W, \FW, \BW$ and $\BW^\square$, respectively.
 
We use the Leray cocycle $c_\BX^\psi$ (cf. \cite[Definition 2.7]{gkt}) to realize the metaplectic cover $\Mp(\BW)$ as the set  $\Sp(\BW)\times\BC^1$. The group law on $\Mp(\BW)=\Sp(\BW)\times\BC^1$ is given by
$$(g_1,z_1)\cdot(g_2,z_2)=\left(g_1g_2,c_\BX^\psi(g_1,g_2)z_1z_2\right).$$
In the same way, we use the Leray cocycle
$c_{\BX^\square}^{\psi_F}$ to realize $\Mp(\BW^\square)$ as $\Sp(\BW^\square)\times\BC^1$. These two cocycles satisfy a key compatibility property (cf. \cite[Proposition 6.26]{gkt}): for every $g_1,g_2\in\Sp(\BW)$,  
$$c_\BX^\psi(g_1,g_2)=c_{\BX^\square}^{\psi_F}(g_1,g_2).$$  This compatibility yields the \emph{base change embedding} (cf. \cite[Definition 6.27]{gkt}) \begin{equation}\label{equ bc-map1}\bc:\Mp(\BW)\lhra\Mp(\BW^\square),\quad (g,z)\mapsto(g,z),\end{equation} which is the unique lift of the natural embedding $\Sp(\BW)\incl\Sp(\BW^\square)$.

Associated with the complete polarizations fixed above and the additive characters $\psi_F, \psi$, we obtain two splittings (cf. \cite[Corollary 11.7]{gkt}): 
$$s_{\psi,X}^V:\Sp(W)\lra\Mp(\BW)\stackrel{\bc}{\lhra}\Mp(\BW^\square),\quad s_{\psi_F,\FX}^{V_F}:\Sp(\FW)\lra\Mp(\BW^\square).$$ Since the splitting is unique for a given symplectic group, the two splittings are compatible;  that is,  $$s_{\psi_F,\FX}^{V_F}|_{\Sp(W)}=s_{\psi,X}^V.$$ 

For the orthogonal groups we use the canonical splittings
$$
s_{\psi,X}^W:\O(V)\lra\Mp(\BW)\stackrel{\bc}{\lhra}\Mp(\BW^\square),\quad
h\mapsto (h\otimes 1_W,1),$$ and
$$s_{\psi_F,\FX}^\FW:\O(V_F)\lra\Mp(\BW^\square),\quad
h\mapsto (h\otimes 1_\FW,1).$$
Compatibility of these two splittings is immediate from their definitions.

Thus we obtain compatible splittings for the base change seesaw pair \eqref{equ seesaw pair1}, summarized by the following commutative diagram:
\begin{equation}\label{equ comp-seesaw1}
\begin{tikzcd}
\Sp(\FW)                 & \times & \O(V_F) \arrow[d, hook] \arrow[rrr, "{\left(s_{\psi_F,\FX}^{V_F},\ s_{\psi_F,\FX}^\FW\right)}"] &  &                    & \Mp(\BW^\square) \arrow[d, no head, equal] \\
\Sp(W) \arrow[u, hook]   & \times & \O(V) \arrow[rr, "{\left(s_{\psi,X}^{V},\ s_{\psi,X}^W\right)}"]    &  & \Mp(\BW) \arrow[r, hook, "{\bc}"] & \Mp(\BW^\square).
\end{tikzcd}\end{equation}

\subsubsection{} Now consider the analogous splittings for the base change seesaw pairs (\ref{equ seesaw pair2}). 

Define
$$\begin{aligned}&\BX_\tau:=X\otimes_E V_\tau,\quad\BY_\tau:=Y\otimes_E V_\tau,\\
&\BX^\square_\tau:=\R_{E/F}(\BX_\tau)=X_F\otimes_F\FV_\tau,\\&\BY^\square_\tau:=\R_{E/F}(\BY_\tau)=Y_F\otimes_F\FV_\tau.
\end{aligned}$$  Then $$\BW_\tau=\BX_\tau\oplus\BY_\tau,\quad \BW_\tau^\square=\BX_\tau^\square\oplus\BY_\tau^\square$$ are complete polarizations of $\BW_\tau$ and $\BW_\tau^\square$,  respectively.

There are natural identifications
$$\BX_\tau=\BX,\quad  \BY_\tau=\BY,\quad \BX_\tau^\square=\BX^\square,\quad \BY_\tau^\square=\BY^\square,$$ induced by the natural identification $V=V_\tau$.

We explicate $\Mp(\BW_\tau)$ (resp. $\Mp(\BW^\square_\tau)$) as  
$\Sp(\BW_\tau)\times\BC^1$ (resp. $\Sp(\BW^\square_\tau)\times\BC^1$) via the Leray cocycle $c^\psi_{\BX_\tau}$ (resp. $c^{\psi_F}_{\BX_\tau^\square}$). As in (\ref{equ bc-map1}), we have the base change embedding \begin{equation}\label{equ bc-map2}\bc:\Mp(\BW_\tau)\lhra\Mp(\BW_\tau^\square),\quad (g,z)\mapsto(g,z).\end{equation}
For the symplectic groups we have the compatible splittings
$$s_{\psi,X}^{V_\tau}:\Sp(W)\lra\Mp(\BW_\tau)\stackrel{\bc}{\lhra}\Mp(\BW^\square_\tau),\quad s_{\psi_F,X_F}^{\FV_\tau}:\Sp(W_F)\lra\Mp(\BW^\square_\tau),$$ and for the orthogonal groups the compatible canonical splittings
$$
s_{\psi,X}^W:\O(V)\lra\Mp(\BW_\tau)\stackrel{\bc}{\lra}\Mp(\BW_\tau^\square),\quad s_{\psi_F,X_F}^{W_F}:\O(\FV_\tau)\lra\Mp(\BW_\tau^\square).$$

In summary, the compatible splittings for the base change seesaw pairs (\ref{equ seesaw pair2}) are encoded in the following commutative diagram:
\begin{equation}\label{equ comp-seesaw2}
\begin{tikzcd}
\Sp(W_F) \arrow[d, hook]                 & \times & \O(\FV_\tau)  \arrow[rrr, "{\left(s_{\psi_F,X_F}^{\FV_\tau},\ s_{\psi_F,X_F}^{W_F}\right)}"] &  &                    & \Mp(\BW^\square) \arrow[d, no head, equal] \\
\Sp(W)   & \times & \O(V_\tau) \arrow[u, hook] \arrow[rr, "{\left(s_{\psi,X}^{V_\tau},\ s_{\psi,X}^W\right)}"]    &  & \Mp(\BW) \arrow[r, hook, "{\bc}"] & \Mp(\BW^\square).
\end{tikzcd}\end{equation}

\subsubsection{} We now recall the relation between $\Mp(\BW)$ and $\Mp(\BW_\tau)$, as well as the relation between the splittings of the dual pairs $(\O(V),\Sp(W))$ and $(\O(V_\tau),\Sp(W))$.

Let $\Mp(\BW)_{\psi_\tau}=\Sp(\BW)\times\BC^1$ be the realization of the metaplectic group defined by the Leray cocycle $c_\BX^{\psi_\tau}$. The natural identification  $\Sp(\BW_\tau)=\Sp(\BW)$ lifts uniquely to an isomorphism of metaplectic groups
\begin{equation}\label{equ meta-id}
\Mp(\BW)_{\psi_\tau}\stackrel{\simeq}{\lra}\Mp(\BW_\tau),\quad
(g,z)\mapsto(g,z).
\end{equation} Furthermore, the splittings are compatible in the sense of the following commutative diagram  (cf. \cite[Proposition 12.10]{gkt})
\begin{equation}\label{equ split-id1}\begin{tikzcd}
\Sp(W) \arrow[rr,"s_{\psi_\tau, X}^V"] \arrow[d, no head, equal] &  & \Mp(\BW)_{\psi_\tau}=\Sp(\BW)\times\BC^1 \arrow[d, no head, equal] \\
\Sp(W) \arrow[rr,"s_{\psi,X}^{V_\tau}"]                                &  & \Mp(\BW_\tau)=\Sp(\BW_\tau)\times\BC^1,                     
\end{tikzcd}\end{equation}
where $s_{\psi_\tau,X}^V$ is the splitting associated to the additive character $\psi_\tau$ and the isotropic subspace $X$.

Let  $$s_{\psi_\tau,X}^{W}:\O(V)\lra\Mp(\BW)_{\psi_\tau},\quad
h\mapsto(h\otimes 1_W,1),$$ be the canonical splitting.
Under the natural identification  $\O(V_\tau)=\O(V)$, 
the following diagram commutes:
\begin{equation}\label{equ split-id2}\begin{tikzcd}
\O(V) \arrow[rr,"s_{\psi_\tau, X}^W"] \arrow[d, no head, equal] &  & \Mp(\BW)_{\psi_\tau}=\Sp(\BW)\times\BC^1 \arrow[d, no head, equal] \\
\O(V_\tau) \arrow[rr,,"s_{\psi,X}^W"]                                &  & \Mp(\BW_\tau)=\Sp(\BW_\tau)\times\BC^1.                    
\end{tikzcd}\end{equation}

\subsection{Weil representations}
\subsubsection{}
Let $\omega_{\psi,\BW}$ (resp. $\omega_{\psi_F,\BW^\square}$) be the Weil representation of $\Mp(\BW)$ (resp. $\Mp(\BW^\square)$) associated to the additive character $\psi$ (resp. $\psi_F$). These representations are realized on the Schrodinger models with respect to the complete polarizations fixed in Sect. \ref{subsec splitting}:  
$$\omega_{\psi,\BW}\ \ \textrm{on}\ \ C_c^\infty(\BY),\qquad \omega_{\psi_F,\BW^\square}\ \ \textrm{on}\ \ C_c^\infty(\BY^\square).$$ The natural identification $\BY^\square=\BY$  induces an identification $C_c^\infty(\BY^\square)=C_c^\infty(\BY)$. Under this  identification,   we have (cf. \cite[Proposition 9.21]{gkt})
\begin{equation}\label{equ res-weil 1}
\omega_{\psi_F,\BW^\square}|_{\Mp(\BW)}=\omega_{\psi,\BW}.
\end{equation}
Now, via the compatible splittings of the seesaw pair (\ref{equ comp-seesaw1}), let
$$\omega_{V,W,\psi}:=\omega_{\psi,\BW}|_{\Sp(W)\times\O(V)},\quad
\omega_{V_F,\FW,\psi_F}:=\omega_{\psi_F,\BW^\square}|_{\Sp(\FW)\times\O(V_F)}.$$
Then we obtain the following compatibility relations:
\begin{equation}\label{equ res-weil 2}
\omega_{V,W,\psi}|_{\O(V_F)}=\omega_{V_F,\FW,\psi_F}|_{\O(V_F)},\quad\omega_{V,W,\psi}|_{\Sp(W)}=\omega_{V_F,\FW,\psi_F}|_{\Sp(W)}.
\end{equation}

\subsubsection{} Similarly, let $\omega_{\psi,\BW_\tau}$ (resp. $\omega_{\psi_F,\BW_\tau^\square}$) be the Weil representation of $\Mp(\BW_\tau)$ (resp. $\Mp(\BW^\square_\tau)$),  both realized on the Schrodinger model $C_c^\infty(\BY_\tau)=C_c^\infty(\BY_\tau^\square)$. Using the compatible splittings of the seesaw pair \eqref{equ comp-seesaw2}, let
$$\omega_{V_\tau,W,\psi}:=\omega_{\psi,\BW_\tau}|_{\Sp(W)\times\O(V_\tau)},\quad
\omega_{\FV_\tau,W_F,\psi_F}:=\omega_{\psi_F,\BW_\tau^\square}|_{\Sp(W_F)\times\O(\FV_\tau)}.$$ Then we obtain the following compatibility relations:
\begin{equation}\label{equ res-weil 3}
\omega_{V_\tau,W,\psi}|_{\Sp(W_F)}=\omega_{\FV_\tau,W_F,\psi_F}|_{\Sp(W_F)},\quad \omega_{V_\tau,W,\psi}|_{\O(V_\tau)}=\omega_{\FV_\tau,W_F,\psi_F}|_{\O(V_\tau)}.
\end{equation}

\subsubsection{}
Recall that $\Mp(\BW)_{\psi_\tau}$ denotes the concrete realization 
$\Sp(\BW)\times\BC^1$ defined by the Leray cocycle $c_\BX^{\psi_\tau}$. Let $\omega_{\psi_\tau,\BW}$ be the Weil representation of $\Mp(\BW)_{\psi_\tau}$, realized on the Schrodinger model $C_c^\infty(\BY)$. Under the natural identification  $C_c^\infty(\BY)=C_c^\infty(\BY_\tau)$ and the identification $\Mp(\BW)_{\psi_\tau}=\Mp(\BW_\tau)$ via (\ref{equ meta-id}), we have \begin{equation}\label{equ weil equ1}\omega_{\psi_\tau,\BW}=\omega_{\psi,\BW_\tau}.\end{equation}
Let $$\omega_{V,W,\psi_\tau}:=\omega_{\psi_\tau,\BW}|_{\Sp(W)\times\O(V)},$$ using the splitting $(s^{V}_{\psi_\tau,X},s^W_{\psi_\tau,X})$. 
Then, by (\ref{equ weil equ1}), (\ref{equ split-id1}) and (\ref{equ split-id2}), we obtain
\begin{equation}\label{equ weil equ2}\omega_{V,W,\psi_\tau}=\omega_{V_\tau, W,\psi}.\end{equation}
In particular, together with (\ref{equ res-weil 3}) we have
\begin{equation}\label{equ weil equ3}\omega_{\FV_\tau,W_F,\psi_F}|_{\O(V_\tau)}=\omega_{V_\tau,W,\psi}|_{\O(V_\tau)}=\omega_{V,W,\psi_\tau}|_{\O(V)}.\end{equation}

Recall the element  $c_\tau\in\GSp(W)$ defined in (\ref{equ similitude}). Let $$\varphi_\tau:\Sp(W)\lra\Sp(W),\quad g\mapsto c_\tau gc_\tau^{-1}.$$
For a representation $\Pi\in\Rep(\Sp(W))$, we set 
$$\Pi_\tau:=\Pi\circ \varphi_\tau^{-1},\quad\Pi^\tau:=\Pi\circ \varphi_\tau.$$
A fundamental compatibility (cf. \cite[Proposition 9.18]{gkt}) is
\begin{equation}
\omega_{V,W,\psi_\tau}|_{\Sp(W)}=\omega_{V,W,\psi}|_{\Sp(W)}\circ\varphi_\tau.
\end{equation}
Together with (\ref{equ res-weil 2}) and (\ref{equ weil equ2}), this yields 
\begin{equation}\label{equ weil equ4}
\omega_{V_F,\FW,\psi_F}|_{\Sp(W)}\circ\varphi_\tau=\omega_{V,W,\psi}|_{\Sp(W)}\circ\varphi_\tau=\omega_{V,W,\psi_\tau}|_{\Sp(W)}=\omega_{V_\tau,W,\psi}|_{\Sp(W)}.
\end{equation}

\subsection{Transitions of models} Other models of the Weil representations will be needed later; the transitions between these models are essential for our work.

Consider the Weil representation $\omega_{V_F,\FW,\psi_F}$. We introduce an alternative complete polarization $$\FW=\FX'\oplus\FY',$$ where $$\FX':=X_F\oplus \tau Y_F,\quad \FY':=Y_F\oplus\tau X_F.$$ This gives rise to the complete polarization
\begin{equation}\label{equ polar-1}\BW^\square=\BX^{\square'}\oplus\BY^{\square'},\end{equation} with
$$\begin{aligned}&\BX^{\square'}:=\FX'\otimes_FV_F=(X_F\otimes V_F)\bigoplus (\tau Y_F\otimes V_F),\\ 
&\BY^{\square'}:=\FY'\otimes_FV_F=(Y_F\otimes V_F)\bigoplus (\tau X_F\otimes V_F).\end{aligned}$$
For comparison, the original polarization is $\BW^\square=\BX^{\square}\oplus \BY^{\square}$, with
$$\begin{aligned}&\BX^{\square}=\FX\otimes_FV_F=(X_F\otimes V_F)\bigoplus (\tau X_F\otimes V_F),\\ 
&\BY^{\square}=\FY\otimes_FV_F=(Y_F\otimes V_F)\bigoplus(\tau Y_F\otimes V_F).\end{aligned}$$
If we now realize $\omega_{V_F,\FW,\psi_F}$ on the Schrodinger model $C_c^\infty(\BY^{\square'})$ with respect to the  alternative polarization (\ref{equ polar-1}),
the intertwining operator $$\CI_\FW:C_c^\infty(\BY^\square)\lra C_c^\infty(\BY^{\square'})$$ between the two Schrodinger models is given by (cf. \cite[Sect. 9.2.3]{gkt})
\begin{equation}\label{equ int-1}\CI_\FW(\phi)(x+y)=\int_{ \tau Y_F\otimes V_F}\phi(y+y')\psi_F\left(\pair{y',x}_{\BW^\square} \right)\ \d y',\end{equation}
where $x\in \tau X_F\otimes V_F$ and $y\in Y_F\otimes V_F.$
In particular, evaluating at zero yields
\begin{equation}\label{equ int-val1}\CI_\FW(\phi)(0)=\int_{\tau Y_F\otimes V_F}\phi(y')\ \d y'.\end{equation}
 
For the Weil representation $\omega_{\FV_\tau,W_F,\psi_F}$,  the complete polarization $$\FV_\tau=\tau V_F\oplus V_F$$ yields an alternative complete polarization of $\BW_\tau^\square$:
\begin{equation}\label{equ polar-2}\BW_\tau^\square=\BX_\tau^{\square'}\oplus\BY_\tau^{\square'},\end{equation}  where
$$\begin{aligned}&\BX_\tau^{\square'}:=W_F\otimes \tau V_F=(X_F\otimes \tau V_F)\bigoplus (Y_F\otimes \tau V_F),\\ 
&\BY^{\square'}_\tau:=W_F\otimes V_F=(X_F\otimes V_F)\bigoplus  (Y_F\otimes V_F).\end{aligned}$$
For comparison, the original polarization is $\BW_\tau^\square=\BX_\tau^{\square}\oplus \BY_\tau^{\square}$, with
$$\begin{aligned}&\BX_\tau^{\square}=X_F\otimes_F\FV_\tau=(X_F\otimes V_F)\bigoplus (X_F\otimes\tau V_F),\\ 
&\BY_\tau^{\square}=Y_F\otimes_F\FV_\tau=(Y_F\otimes V_F)\bigoplus(Y_F\otimes \tau V_F).\end{aligned}$$
If we now realize $\omega_{\FV_\tau,W_F,\psi_F}$ on the Schrodinger model $C_c^\infty(\BY_\tau^{\square'})$ with respect to the alternative polarization (\ref{equ polar-2}),
the intertwining operator $$\CI_{\FV_\tau}:C_c^\infty(\BY_\tau^\square)\lra C_c^\infty(\BY_\tau^{\square'})$$ between the two Schrodinger models is given by 
\begin{equation}\label{equ int-2}\CI_{\FV_\tau}(\phi)(x+y)=\int_{Y_F\otimes \tau V_F}\phi(y+y')\psi_F\left(\pair{y',x}_{\BW_\tau^\square} \right)\ \d y',\end{equation}
where $x\in X_F\otimes V_F$ and $ y\in Y_F\otimes V_F.$
In particular, we have
\begin{equation}\label{equ int-val2}\CI_{\FV_\tau}(\phi)(0)=\int_{Y_F\otimes \tau V_F}\phi(y')\ \d y'.\end{equation}

The natural identifications $\BY^\square=\BY_\tau^\square$ and $C_c^\infty(\BY^\square)=C_c^\infty(\BY_\tau^\square)$ are used implicitly throughout.

\begin{lem}\label{lem int-val1}
For any $\phi\in C_c^\infty(\BY^\square)$, we have
$$\CI_{\FW}(\phi)(0)=\CI_{\FV_\tau}(\phi)(0).$$
\end{lem}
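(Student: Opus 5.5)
Both sides of the claimed identity are given by the evaluation formulas (\ref{equ int-val1}) and (\ref{equ int-val2}), so the plan is to compare the two domains of integration and the two measures. By (\ref{equ int-val1}), $\CI_\FW(\phi)(0)$ is the integral of $\phi$ over the subspace $\tau Y_F\otimes V_F$ of $\BY^\square=(Y_F\otimes V_F)\oplus(\tau Y_F\otimes V_F)$. By (\ref{equ int-val2}), $\CI_{\FV_\tau}(\phi)(0)$ is the integral of $\phi$ over the subspace $Y_F\otimes\tau V_F$ of $\BY_\tau^\square=(Y_F\otimes V_F)\oplus(Y_F\otimes\tau V_F)$.

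The first step is to check that these two subspaces coincide as subsets of the common $F$-vector space $\BY^\square=\BY_\tau^\square=\R_{E/F}(Y\otimes_E V)$. This holds because both are the $F$-span of the elements $\tau y\otimes v=y\otimes\tau v$ with $y\in Y_F$ and $v\in V_F$. The identification $V=V_\tau$ used to define $\BY_\tau=\BY$ is the identity on underlying $E$-vector spaces, so the tensor $\tau y\otimes v$ equals $y\otimes \tau v$ inside $Y\otimes_E V$. Since $\phi$ is restricted to the same subspace in both formulas, the integrands agree pointwise.

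The only substantive point, and the one I expect to be the main obstacle, is matching the Haar measures $\d y'$ on the two sides. In the setup of \cite[Sect.~9.2.3]{gkt}, $\d y'$ is the self-dual measure with respect to the pairing $(y',x)\mapsto\psi_F(\pair{y',x})$ between the summand being integrated over and the complementary summand of the new Lagrangian. On the $\FW$ side that complementary summand is $\tau X_F\otimes V_F$, paired through $\pair{\textrm{-},\textrm{-}}_{\BW^\square}$. On the $\FV_\tau$ side it is $X_F\otimes V_F$, paired through $\pair{\textrm{-},\textrm{-}}_{\BW_\tau^\square}$.

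To compare them, I would compute both pairings on the basic elements $\tau y\otimes v$ and test vectors $\tau x\otimes u$, respectively $x\otimes u$, with $x\in X_F$ and $u\in V_F$:
\begin{align*}
\pair{\tau y\otimes v,\tau x\otimes u}_{\BW^\square}&=\tr_{E/F}\big(\tau^2\pair{y,x}_W\pair{v,u}_V\big)=2\tau^2\pair{y,x}_{W_F}\pair{v,u}_{V_F},\\
\pair{y\otimes\tau v, x\otimes u}_{\BW_\tau^\square}&=\tr_{E/F}\big(\tau\pair{y,x}_W\pair{\tau v,u}_V\big)=2\tau^2\pair{y,x}_{W_F}\pair{v,u}_{V_F}.
\end{align*}
Here $\tau^2\in F$ because $\tr_{E/F}(\tau)=0$. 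Under the identification $\tau x\otimes u\leftrightarrow x\otimes u$ of the two complements, the two pairings therefore coincide. Consequently the two self-dual measures on the common space $\tau Y_F\otimes V_F=Y_F\otimes\tau V_F$ are the same, and the two integrals are equal.

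If the measures in (\ref{equ int-1}) and (\ref{equ int-2}) are instead normalized by some other convention, I would verify that the same computation still identifies the normalizing data on the two sides. Should the normalizations differ by a positive constant, that constant would have to be absorbed into the choice of measure so that the lemma holds as stated.
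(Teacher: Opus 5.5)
Your argument is correct and is the same as the paper's, whose entire proof is the observation that (\ref{equ int-val1}) and (\ref{equ int-val2}) integrate $\phi$ over the same $F$-subspace $\tau Y_F\otimes V_F=Y_F\otimes\tau V_F$ of $\BY^\square=\BY_\tau^\square$. Your pairing computation (both sides equal $2\tau^2\pair{y,x}_{W_F}\pair{v,u}_{V_F}$) is a correct justification of the measure compatibility the paper leaves implicit. One small caveat: self-duality for a pairing between two different spaces only fixes the product of the two Haar measures, so you are also tacitly matching the measures on $\tau X_F\otimes V_F$ and $X_F\otimes V_F$ via $\tau x\otimes u\mapsto x\otimes u$.
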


\begin{proof}
This is immediate from (\ref{equ int-val1}) and (\ref{equ int-val2}).
\end{proof}

\section{Local theta correspondence}\label{sec theta}

\subsection{Theta lifts}
We have introduced the Weil representation $\omega_{V,W,\psi}$ of the dual pair $(\O(V),\Sp(W))$. For $\pi\in\Irr(\Sp(W))$,  its \emph{big theta lift}  $\Theta_{V,W,\psi}(\pi)\in\Rep(\O(V))$ is defined as $$\Theta_{V,W,\psi}(\pi):=(\omega_{V,W,\psi}\otimes \pi^\vee)_{\Sp(W)},$$ where $(\textrm{-})_{\Sp(W)}$ denotes the space of $\Sp(W)$-coinvariants. In other words, $\pi\otimes\Theta_{V,W,\psi}(\pi)$ is the maximal $\pi$-isotypic quotient of $\omega_{V,W,\psi}$. The \emph{small theta lift} $\theta_{V,W,\psi}(\pi)$ is defined to be the maximal semisimple quotient of $\Theta_{V,W,\psi}(\pi)$. 

The same construction applied to $\sigma\in\Irr(\O(V))$ yields theta lifts $\Theta_{V,W,\psi}(\sigma)$ and $\theta_{V,W,\psi}(\sigma)$, which are representations of $\Sp(W)$.

\subsection{The $\tau$-twist}
We may replace $V$ by $V_\tau$ to obtain the big theta lift $\Theta_{V_\tau,W,\psi}(\pi)$, along with the corresponding small theta lift. Likewise, replacing $\psi$ by $\psi_\tau$ yields analogous big and small theta lifts. Recall that we have a natural identification $\O(V_\tau)=\O(V)$. The relations among these various theta lifts are as follows. 

\begin{lem}\label{lem theta relation}
For $\pi\in\Irr(\Sp(W))$, we have 
$$\Theta_{V,W,\psi}(\pi_\tau)=\Theta_{V,W,\psi_\tau}(\pi)=\Theta_{V_\tau,W,\psi}(\pi),\quad \theta_{V,W,\psi}(\pi_\tau)=\theta_{V,W,\psi_\tau}(\pi)=\theta_{V_\tau,W,\psi}(\pi).$$  
\end{lem}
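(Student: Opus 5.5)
The plan is to deduce everything from identities between Weil representations already established in Section \ref{sec weil}, together with the formal behaviour of coinvariants under twisting by an automorphism. The second equality is immediate. By (\ref{equ weil equ2}), under $\O(V_\tau)=\O(V)$ we have $\omega_{V,W,\psi_\tau}=\omega_{V_\tau,W,\psi}$ as representations of $\Sp(W)\times\O(V)$. Hence
$$\Theta_{V,W,\psi_\tau}(\pi)=(\omega_{V,W,\psi_\tau}\otimes\pi^\vee)_{\Sp(W)}=(\omega_{V_\tau,W,\psi}\otimes\pi^\vee)_{\Sp(W)}=\Theta_{V_\tau,W,\psi}(\pi)$$
as $\O(V)$-modules. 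Taking maximal semisimple quotients gives the equality of small theta lifts.

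For the first equality, we use the compatibility $\omega_{V,W,\psi_\tau}|_{\Sp(W)}=\omega_{V,W,\psi}|_{\Sp(W)}\circ\varphi_\tau$. We upgrade it to the joint action of $\Sp(W)\times\O(V)$: we need that on $\O(V)$ the two Weil representations agree on the common Schrödinger model $C_c^\infty(\BY)$. The main point to verify is that $c_\tau$, acting on $\BW=V\otimes W$ as $1_V\otimes c_\tau$, commutes with $\O(V)$. Since the $\O(V)$-action in both cases is given by the canonical splitting $h\mapsto(h\otimes1_W,1)$, the formula $\omega(h)\phi(y)=\phi(h^{-1}y)$ does not depend on the character.

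I expect this to be the main obstacle: the cited result \cite[Proposition 9.18]{gkt} is formulated for the symplectic side. One has to check that the intertwiner realizing it, namely the dilation on $\BY=Y\otimes V$ coming from $c_\tau$ (which acts trivially on $Y$), is $\O(V)$-equivariant. Since $c_\tau$ is the identity on $Y$, the operator is essentially the identity on $C_c^\infty(\BY)$, possibly up to a scalar normalization. It therefore commutes with the $\O(V)$-action, and this gives
$$\omega_{V,W,\psi_\tau}\cong\omega_{V,W,\psi}\circ(\varphi_\tau\times\id)$$
as representations of $\Sp(W)\times\O(V)$.

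With this identity in hand, we compute
$$\Theta_{V,W,\psi_\tau}(\pi)=\bigl((\omega_{V,W,\psi}\circ\varphi_\tau)\otimes\pi^\vee\bigr)_{\Sp(W)}.$$
We then substitute $g\mapsto\varphi_\tau^{-1}(g)$ in the coinvariant relations; since $\varphi_\tau$ is an automorphism of $\Sp(W)$, coinvariants for the twisted action equal coinvariants for the original one. This identifies the space with $(\omega_{V,W,\psi}\otimes(\pi^\vee\circ\varphi_\tau^{-1}))_{\Sp(W)}$. Finally, $(\pi_\tau)^\vee=(\pi^\vee)_\tau=\pi^\vee\circ\varphi_\tau^{-1}$, so this is $\Theta_{V,W,\psi}(\pi_\tau)$. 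The $\O(V)$-action is untouched throughout, so the identification is $\O(V)$-equivariant, and passing to maximal semisimple quotients yields the statement for $\theta$.
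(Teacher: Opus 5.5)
Your proposal is correct and follows the paper's route. The paper deduces the lemma directly from (\ref{equ weil equ4}), that is, from (\ref{equ weil equ2}) together with $\omega_{V,W,\psi_\tau}|_{\Sp(W)}=\omega_{V,W,\psi}|_{\Sp(W)}\circ\varphi_\tau$, and then applies the same change of variables in the $\Sp(W)$-coinvariants. Your check that the $\O(V)$-action $\phi\mapsto\phi(h^{-1}\cdot)$ on $C_c^\infty(\BY)$ is unchanged by the twist, so the identification is $\O(V)$-equivariant, is a point the paper leaves implicit, and your reasoning for it is sound.
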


\begin{proof}
This follows directly  from  (\ref{equ weil equ4}).
\end{proof}

\subsection{Seesaw identities} We will use seesaw identities to relate periods on different groups. Let $\Theta_{V_F,\FW,\psi_F}(1)\in\Rep(\Sp(\FW))$ be the big theta  lift of the trivial representation of $\O(V_F)$ with respect to the Weil representation $\omega_{V_F,\FW,\psi_F}$. Similarly, let  $\Theta_{\FV_\tau,W_F,\psi_F}(1)\in\Rep(\O(\FV_\tau))$ be the big theta  lift of the trivial representation of  $\Sp(W_F)$ with respect to the Weil representation $\omega_{\FV_\tau,W_F,\psi_F}$.

\begin{lem}[Base change seesaw identities]\label{lem seesaw} For $\pi\in\Irr(\Sp(W))$ and $\sigma\in\Irr(\O(V))$, we have
$$\Hom_{\O(V_F)}\left(\Theta_{V,W,\psi}(\pi),\BC\right)\cong\Hom_{\Sp(W)}\left(\Theta_{V_F,\FW,\psi_F}(1),\pi\right),$$ and
$$\Hom_{\Sp(W_F)}\left(\Theta_{V_\tau,W,\psi}(\sigma),\BC\right)\cong\Hom_{\O(V)}\left(\Theta_{\FV_\tau,W_F,\psi_F}(1),\sigma\right).$$
\end{lem}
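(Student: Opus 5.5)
The plan is the standard seesaw argument: compute the Weil representation of the big group, restricted to the product of the two small groups, in two ways, and use Frobenius reciprocity for coinvariants. For the first identity, we work with the seesaw \eqref{equ seesaw pair1} and the compatible splittings \eqref{equ comp-seesaw1}. By \eqref{equ res-weil 2}, the restrictions of $\omega_{V,W,\psi}$ and $\omega_{V_F,\FW,\psi_F}$ to $\Sp(W)\times\O(V_F)$ coincide; call this common representation $\Omega$.

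We compute $\Hom_{\Sp(W)\times\O(V_F)}(\Omega,\pi\otimes\BC)$ in two ways.

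\textbf{From the $(\O(V),\Sp(W))$ side.} By the universal property of the big theta lift, any $\Sp(W)$-equivariant map $\omega_{V,W,\psi}\to\pi$ factors through the maximal $\pi$-isotypic quotient $\pi\otimes\Theta_{V,W,\psi}(\pi)$. This gives a canonical isomorphism
$$\Hom_{\Sp(W)}(\omega_{V,W,\psi},\pi)\cong\Hom_\BC(\Theta_{V,W,\psi}(\pi),\BC).$$
It is $\O(V)$-equivariant, since $\O(V)$ commutes with $\Sp(W)$ and the factorization is functorial. Taking $\O(V_F)$-invariants then yields $\Hom_{\O(V_F)}(\Theta_{V,W,\psi}(\pi),\BC)$. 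The delicate point is to justify the first isomorphism. I would argue directly: a $\pi$-valued $\Sp(W)$-map $\ell$ corresponds to the pairing $\omega\otimes\pi^\vee\to\BC$, $\phi\otimes v^\vee\mapsto\langle\ell(\phi),v^\vee\rangle$. This is $\Sp(W)$-invariant, hence a functional on $(\omega\otimes\pi^\vee)_{\Sp(W)}=\Theta(\pi)$. Conversely, a functional on $\Theta(\pi)$ gives a map $\omega\to(\pi^\vee)^*$ whose image is smooth, hence lands in $(\pi^\vee)^\vee\cong\pi$ by admissibility of $\pi$. Equivariance for $\O(V_F)$ on both sides is clear.

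\textbf{From the $(\O(V_F),\Sp(\FW))$ side.} An $\O(V_F)$-invariant functional on $\omega_{V_F,\FW,\psi_F}$ is the same as an $\Sp(\FW)$-equivariant map from the coinvariants $(\omega_{V_F,\FW,\psi_F})_{\O(V_F)}=\Theta_{V_F,\FW,\psi_F}(1)$. Hence
$$\Hom_{\Sp(W)\times\O(V_F)}(\Omega,\pi\otimes\BC)=\Hom_{\Sp(W)}\bigl(\Theta_{V_F,\FW,\psi_F}(1),\pi\bigr).$$
This step is formal, since the trivial representation is its own contragredient and no admissibility issue arises here. Equating the two computations gives the first isomorphism.

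\textbf{The second identity.} This is identical, using the seesaw \eqref{equ seesaw pair2}, the splittings \eqref{equ comp-seesaw2}, and the equality \eqref{equ res-weil 3} of $\omega_{V_\tau,W,\psi}$ and $\omega_{\FV_\tau,W_F,\psi_F}$ on $\Sp(W_F)\times\O(V_\tau)$. The roles of the groups are swapped: $\sigma$ on $\O(V)=\O(V_\tau)$ replaces $\pi$, and the $\Sp(W_F)$-coinvariants give $\Theta_{\FV_\tau,W_F,\psi_F}(1)$.

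\textbf{Main obstacle.} I expect the only real care to be the compatibility of the splittings, namely that the $\Sp(W)\times\O(V_F)$-actions on both sides literally agree. This is exactly what \eqref{equ comp-seesaw1}, \eqref{equ comp-seesaw2} and \eqref{equ res-weil 2}–\eqref{equ res-weil 3} provide, so it may be taken as given. The remaining care is the admissibility step identifying $\Hom_{\Sp(W)}(\omega,\pi)$ with the dual of $\Theta(\pi)$.
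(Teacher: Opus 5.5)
Your proposal is correct and is essentially the paper's argument. The paper's proof is the single line citing \eqref{equ res-weil 2}, \eqref{equ res-weil 3} and the definition of the big theta lift; your two-way computation of $\Hom_{\Sp(W)\times\O(V_F)}(\Omega,\pi\otimes\BC)$ (and its analogue on $\Sp(W_F)\times\O(V_\tau)$) is the standard unpacking of that line, with admissibility of $\pi$ correctly invoked to identify $\Hom_{\Sp(W)}(\omega_{V,W,\psi},\pi)$ with the dual of $\Theta_{V,W,\psi}(\pi)$.

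One small wording slip: an $\O(V_F)$-invariant functional on $\omega_{V_F,\FW,\psi_F}$ is a functional on the coinvariants, and it is the $\Sp(W)\times\O(V_F)$-maps to $\pi$ that correspond to $\Sp(W)$-maps from $\Theta_{V_F,\FW,\psi_F}(1)$. Your displayed formula for this step is nonetheless right.
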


\begin{proof}
This follows from (\ref{equ res-weil 2}), (\ref{equ res-weil 3}), and the definition of the big theta lifts.
\end{proof}

\subsection{Howe duality}
The following fundamental result, known as the Howe duality, was conjectured by Howe \cite{how79, how89}. It was proved by Waldspurger \cite{wal90}  when $p\neq 2$, and in full generality by Gan--Takeda \cite{gt16} and Gan--Sun \cite{gs17}. 

\begin{thm}[Howe duality]\label{thm howe}
For $\pi\in\Irr(\Sp(W))$, the small theta lift $\theta_{V,W,\psi}(\pi)$ is either zero or irreducible. Moreover, if $\theta_{V,W,\psi}(\pi)\cong\theta_{V,W,\psi}(\pi')$
is non-zero, then $\pi\cong\pi'$. The same properties hold for small theta lifts from the orthogonal groups to the symplectic groups.
\end{thm}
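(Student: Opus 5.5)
This is the Howe duality theorem, and in the paper it is simply quoted from \cite{wal90, gt16, gs17}. If I had to prove it I would follow the Gan--Takeda strategy, supplemented by the argument of Gan--Sun that removes the restriction on $p$. The statement splits into two assertions:
\begin{enumerate}
\item \emph{Irreducibility}: $\Theta_{V,W,\psi}(\pi)$ has at most one irreducible quotient, so $\theta_{V,W,\psi}(\pi)$ is zero or irreducible.
\item \emph{Injectivity}: $\theta(\pi)\cong\theta(\pi')\neq0$ forces $\pi\cong\pi'$.
\end{enumerate}
Both reduce to one multiplicity bound. For $\pi,\pi'\in\Irr(\Sp(W))$ and $\sigma\in\Irr(\O(V))$, it suffices to show
$$\dim\Hom_{\Sp(W)\times\O(V)}(\omega_{V,W,\psi},\pi\otimes\sigma)\cdot\dim\Hom_{\Sp(W)\times\O(V)}(\omega_{V,W,\psi},\pi'\otimes\sigma)\le1$$
whenever $\pi\not\cong\pi'$ or the same $\pi$ is repeated. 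The same bound must hold with the roles of the two groups swapped.

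The first step is the \textbf{supercuspidal case}, following Kudla \cite{kud86}. If $\pi$ is supercuspidal, Schur orthogonality and matrix-coefficient integrals realize $\pi\otimes\Theta(\pi)$ at the first occurrence as a direct summand of $\omega$, and show that $\Theta(\pi)$ is irreducible there. Beyond the first occurrence, $\Theta(\pi)$ embeds into a principal series with a unique irreducible quotient.

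The second step treats the \textbf{general case by induction} on the Witt towers, using Kudla's filtration of the Jacquet modules $R_P(\omega_{V,W,\psi})$ for maximal parabolics of $\Sp(W)$ and $\O(V)$. Suppose $\pi$ is not supercuspidal, so $\pi\hookrightarrow\Ind_P^{\Sp(W)}(\chi\otimes\pi_0)$ for a smaller group. Frobenius reciprocity together with the filtration shows that any quotient $\sigma$ of $\Theta(\pi)$ embeds into an induced representation built from $\Theta(\pi_0)$ for a smaller dual pair. The inductive hypothesis then pins $\sigma$ down as the unique irreducible quotient, provided the right exponent $\chi$ is chosen. Choosing $\chi$ so that the relevant Jacquet-module constituents are controlled is the delicate bookkeeping in Gan--Takeda.

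The \textbf{main obstacle} is the "bad" range of exponents, where several pieces of Kudla's filtration can contribute simultaneously. Gan--Takeda's inductive argument does not close there in general; it is exactly where Waldspurger needed $p\neq2$. For this case I would use the Gan--Sun doubling seesaw. In the classical doubling seesaw with $V^\square=V\oplus V^-$, the lifts $\Theta(\pi)$ and $\Theta(\pi')$ are related to $\Hom_{\Sp(W)}(\pi\otimes\pi'^{\vee},\cdot)$ applied to the degenerate principal series $I(s)$ of $\O(V^\square)$ (respectively of the doubled symplectic group). The bound then reduces to analysing $\Hom_{G\times G}(I(s),\pi\otimes\pi'^{\vee})$ over the finitely many orbits of $G\times G$ on $P\bs G^\square$. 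The negligible orbits contribute nothing for generic twists. The open orbit contributes $\Hom_G(\pi,\pi')$, which has dimension at most one and vanishes if $\pi\not\cong\pi'$. Combined with the MVW involution, which identifies $\pi^\vee$ with $\pi^{\delta}$, this gives the dimension bound, and hence both irreducibility and injectivity, in all residue characteristics.
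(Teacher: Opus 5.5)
\textbf{There is a genuine gap.} The paper gives no proof of this statement; it is quoted from Waldspurger, Gan--Takeda and Gan--Sun. So the question is whether your reconstruction of the cited proof holds up, and it breaks at the step you call decisive.

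To compare $\Theta(\pi)$ and $\Theta(\pi')$ for $\pi,\pi'\in\Irr(\Sp(W))$ you must double $W$, not $V$. The seesaw $\Sp(W\oplus W^-)\supset\Sp(W)\times\Sp(W)$, $\O(V)^\Delta\subset\O(V)\times\O(V)$, together with MVW, turns a common irreducible quotient into a nonzero element of $\Hom_{\Sp(W)\times\Sp(W)}(R(V),\pi\otimes(\pi')^\vee)$. Here $R(V)\cong\Theta(1)$ lives in the degenerate principal series at the single point $s_0=\frac{2m-2n-1}{2}$ dictated by $\dim V$ and $\dim W$.

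\emph{No generic twist.} There is no free parameter, so ``the negligible orbits contribute nothing for generic twists'' is not available. At $s_0$, the stratum indexed by $t>0$ contributes, via Bernstein's Frobenius reciprocity, a $\Hom$ into $R_{\overline{P}_t}(\pi)\otimes R_{\overline{P}_t}((\pi')^\vee)$. On this space $\GL_t$ must act through one specific character determined by $s_0$ and $t$. So the term can be nonzero only when $\pi$ has that exponent, which is precisely the ``bad'' case where you deploy the seesaw; nothing rules it out there. Also, $R(V)$ is a submodule, not a quotient, of the degenerate principal series. Even in good cases you therefore need structural input to pass between $R(V)$ and $I(\pm s_0)$, as in the proof of Theorem \ref{thm symp-orth}.

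\emph{What Gan--Takeda actually do.} Their division of labour is the reverse of yours. The seesaw plus Mackey theory gives $\dim\Hom(\theta(\pi_1),\theta(\pi_2))\le\dim\Hom(\pi_1,\pi_2)$ when $\pi_1$ lacks the special exponent. Otherwise, $\pi$ embeds in $\Ind(\chi_V|\det|^{\lambda}\otimes\pi_0)$ from a parabolic with Levi $\GL_t\times\Sp(W')$, with the special exponent and $t$ maximal. Then the top piece of Kudla's filtration (built from $\omega_{V,W'}$, same $V$) shows every irreducible quotient of $\Theta(\pi)$ is a quotient of $\Theta_{V,W'}(\pi_0)$, and induction on $\dim W$ finishes. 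The two halves mesh because one character governs both the closed-orbit terms and this top piece.

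Several further corrections.

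\begin{enumerate}
\item Your displayed target bound is too weak. ``Product $\le 1$ whenever $\pi\not\cong\pi'$'' still allows $\sigma$ to be a common quotient of $\Theta(\pi)$ and $\Theta(\pi')$, so injectivity does not follow. You need vanishing for $\pi\not\cong\pi'$, which is what your open-orbit count actually gives. Likewise the swapped bound must vanish for $\sigma\not\cong\sigma'$ to force a unique irreducible quotient. Gan--Takeda's form $\dim\Hom(\theta(\pi_1),\theta(\pi_2))\le\dim\Hom(\pi_1,\pi_2)$ yields both assertions at once, because $\theta(\pi)$ is semisimple.
\item The attributions are off. Gan--Takeda already cover all residue characteristics for symplectic--orthogonal and unitary pairs; Gan--Sun treat the quaternionic pairs. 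Waldspurger's hypothesis $p\neq 2$ comes from his lattice-model methods, not from a bad range of exponents in Kudla's filtration.
\item In the supercuspidal step, embedding $\Theta(\pi)$ into something with a unique irreducible quotient says nothing about the quotients of $\Theta(\pi)$. Kudla instead embeds each irreducible quotient of $\Theta(\pi)$ into an induced representation with a unique irreducible subrepresentation.
\end{enumerate}
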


Let $\CV$ be the Witt tower of quadratic spaces containing $V$. For $\pi\in\Irr(\Sp(W))$, define its \emph{first occurrence index} in $\CV$ as
$$m_\CV(\pi):=\min\{\dim V'\mid V'\in\CV, \Theta_{V',W,\psi}(\pi) \neq0\}.$$  When $\dim V'=m_\CV(\pi)$, the representation 
$\Theta_{V',W,\psi}(\pi)$ is called the \emph{first occurrence} (of $\pi$ in $\CV$). 
A key result, the {\em persistence principle}, asserts that for any $V'\in\CV$ with $\dim V'\geq m_\CV(\pi)$, one has $ \Theta_{V',W,\psi}(\pi)\neq0$.

Let $V_\rc$ be the \emph{companion} space of $V$; that is, the unique (up to isomorphism) quadratic space over $E$  satisfying $$\dim V_\rc=\dim V,\quad \disc(V_\rc)=\disc(V),\quad  V_\rc\ncong V.$$ Denote by $\CV_\rc$ the Witt tower of quadratic spaces containing $V_\rc$. 

Similarly, let $\CW$ be the Witt tower of symplectic spaces containing $W$. For $\sigma\in\Irr(\O(V))$, define its first occurrence index in $\CW$ as
$$m_\CW(\sigma):=\min\{\dim W'\mid W'\in\CW, \Theta_{V,W',\psi}(\sigma)\neq0\}.$$ 

Recall that $\dim W=2n$ and $\dim V=2m$.

\begin{thm}[Conservation relation, \cite{sz15}]\label{thm conservation}
For $\pi\in\Irr(\Sp(W))$, we have 
$$m_{\CV}(\pi)+m_{\CV_\rc}(\pi)=4n+4.$$
For $\sigma\in\Irr(\O(V))$, we have
$$m_\CW(\sigma)+m_\CW(\sigma\otimes\det)=4m.$$ 
\end{thm}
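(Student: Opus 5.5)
This conservation relation is quoted from \cite{sz15}, and we would not reprove it inside the paper. If one wanted to reconstruct it, the plan is to prove the two inequalities $m_{\CV}(\pi)+m_{\CV_\rc}(\pi)\geq 4n+4$ and $\leq 4n+4$ separately. Both go through the \emph{classical} doubling seesaw and the structure of the Siegel degenerate principal series $I(s)$ of $\Sp(W\oplus W^-)$. The orthogonal statement $m_\CW(\sigma)+m_\CW(\sigma\otimes\det)=4m$ is handled by the symmetric argument, with $\sigma\otimes\det$ in the role of the companion space.

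\textbf{Lower bound.} Suppose $\Theta_{V_1,W,\psi}(\pi)\neq 0$ and $\Theta_{V_2,W,\psi}(\pi)\neq0$ with $V_1\in\CV$, $V_2\in\CV_\rc$, and $\dim V_1+\dim V_2<4n+4$.
\begin{enumerate}
\item By the classical seesaw $(\O(V_i)\times\O(V_i),\Sp(W\oplus W^-))$, each $\pi\otimes\pi^\vee$ is a quotient of the Rallis quotient $R(V_i)=\Theta_{V_i,W\oplus W^-,\psi}(1)\subset I(s_i)$, with $s_i=\dim V_i/2-(2n+1)/2$.
\item By persistence we may move $V_1$ or $V_2$ up its Witt tower to arrange $\dim V_1+\dim V_2=4n+2$. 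Then $s_1=-s_2$, so $I(s_1)$ and $I(-s_1)$ are contragredient.
\item The Kudla--Rallis description of $I(s)$ says $R(V_1)$ and $R(V_2)$ are the two complementary constituents at these points. Hence the $\pi\otimes\pi^\vee$-period on $R(V_2)$ pairs to zero with $R(V_1)$, while the doubling zeta integral (nonvanishing after normalization) forces a nonzero pairing. This contradiction gives the dichotomy bound $\geq 4n+4$.
\end{enumerate}

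\textbf{Upper bound.} This is the hard direction and the main obstacle.
\begin{enumerate}
\item I would follow Sun--Zhu. Take $V_1,V_2$ with $\dim V_1+\dim V_2=4n+4$, so that $s_1=-s_2=1/2$.
\item Use the exact sequence $0\to R(V_1)\to I(1/2)\to R(V_2)\to0$, or the reverse ordering depending on which is smaller.
\item Use the nonvanishing of $\Hom_{\Sp(W)\times\Sp(W)}(I(s),\pi\otimes\pi^\vee)$ near $s=1/2$, obtained from the leading term of the doubling zeta integral. This shows $\pi\otimes\pi^\vee$ is a quotient of at least one of $R(V_1)$ or $R(V_2)$.
\item The delicate point is ruling out that the functional lives only on the ``wrong'' piece. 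Sun--Zhu achieve this with a uniqueness/vanishing result for invariant distributions on the open $\Sp(W)\times\Sp(W)$-orbit versus the boundary orbits in $\FP\bs\Sp(W\oplus W^-)$, in the spirit of Proposition \ref{prop orbits} and the negligible-orbit analysis.
\item From that, $\pi$ must occur in both towers at dimensions summing to $4n+4$.
\end{enumerate}
I expect this orbit-by-orbit vanishing of $\pi\otimes\pi^\vee$-equivariant functionals on the boundary strata to be where essentially all the work lies. For the purposes of this paper we simply invoke \cite{sz15}.
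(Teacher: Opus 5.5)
Citing \cite{sz15} is exactly what the paper does, since it gives no argument of its own, so at that level your proposal matches the paper. The reconstruction you sketch, however, has concrete errors, and its two halves use the key structural input the wrong way round.

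\emph{Upper bound.} Your numerology is inconsistent. With $s_i=\dim V_i/2-(2n+1)/2$, the condition $\dim V_1+\dim V_2=4n+4$ gives $s_1+s_2=1$, so ``$s_1=-s_2=1/2$'' is impossible. The condition $s_1=-s_2$ is $\dim V_1+\dim V_2=4n+2$, which is what you use yourself in the lower bound. The sequence $0\lra R(V_1)\lra I(1/2)\lra R(V_2)\lra 0$ is also wrong. The Kudla--Rallis structure (the classical-doubling analogue of Proposition \ref{prop degps-symp}(1)) says the following. Take $V_1\in\CV$ and $V_2\in\CV_\rc$ with $\dim V_1+\dim V_2=4n+2$ and, say, $\dim V_2>2n+1$, and put $s_0=(\dim V_2-2n-1)/2$. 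Then
\[0\lra R(V_2)\lra I(s_0)\lra R(V_1)\lra 0.\]
Here the quotient comes from the complementary space of the companion of $V_2$, which lies in the \emph{other} tower.

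Given this sequence, your step 3 already finishes the argument, and there is no ``wrong piece''. The leading Laurent coefficient of the doubling zeta integral gives a nonzero element of $\Hom_{\Sp(W)\times\Sp(W)}(I(s_0),\pi\otimes\pi^\vee)$. It is either nonzero on $R(V_2)$ or it factors through $R(V_1)$. The seesaw then gives $\Theta_{V_2,W,\psi}(\pi)\neq 0$ or $\Theta_{V_1,W,\psi}(\pi)\neq0$. Apply this with $\dim V_1=m_\CV(\pi)-2$ (the case $\dim V_1>2n+1$ is symmetric); this yields $m_{\CV_\rc}(\pi)\le 4n+4-m_\CV(\pi)$.

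So here the upper bound is the short direction. A single functional yields occurrence in \emph{one} of the two spaces; the other occurrence comes from the definition of $m_\CV(\pi)$. No orbit-by-orbit vanishing is needed.

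\emph{Lower bound.} This is where the real gap is. Your step 3 invokes exactly the sequence displayed above, but that sequence is compatible with both $\Hom(R(V_1),\pi\otimes\pi^\vee)$ and $\Hom(R(V_2),\pi\otimes\pi^\vee)$ being nonzero. The phrase ``the period on $R(V_2)$ pairs to zero with $R(V_1)$'' does not describe an actual contradiction, because a functional on $R(V_2)$ and one on $R(V_1)$ do not pair with each other. The existence of a nonzero doubling functional on $I(s_0)$ gives ``at least one occurs'', not ``at most one occurs''.

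An additional input is required. One standard route is a seesaw with the trivial representation:
\begin{enumerate}
\item Suppose $\Theta_{V_1,W,\psi}(\pi)\neq 0\neq\Theta_{V_2,W,\psi}(\pi)$.
\item By MVW there is a nonzero $\Sp(W)$-map $\omega_{V_2^-,W,\psi}\lra\pi^\vee$.
\item Hence the composite $\omega_{V_1,W,\psi}\otimes\omega_{V_2^-,W,\psi}\lra\pi\otimes\pi^\vee\lra\BC$ is a nonzero $\Sp(W)$-invariant functional on $\omega_{V_1\oplus V_2^-,W,\psi}$.
\item The space $V_1\oplus V_2^-$ has trivial discriminant and dimension $\dim V_1+\dim V_2$, and lies in the non-split Witt tower.
\end{enumerate}
This reduces the lower bound to the statement that the trivial representation of $\Sp(W)$ first occurs in that tower at dimension $4n+4$. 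Alternatively, you would need a multiplicity-one and functional-equation analysis of the doubling zeta integral, in the style of Kudla--Rallis.
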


As a consequence, we have the following \emph{theta 
dichtomy}:
\begin{itemize}
\item For $\pi\in\Irr(\Sp(W))$, if $V_1\in\CV$ and $V_2\in\CV_\rc$ satisfy $$\dim V_1+\dim V_2=4n+2,$$ then exactly one of $\Theta_{V_1,W,\psi}(\pi)$ and $\Theta_{V_2,W,\psi}(\pi)$ is non-zero.
\item For $\sigma\in\Irr(\O(V))$, if $W_1, W_2\in\CW$ satisfy $$\dim W_1+\dim W_2=4m-2,$$ then exactly one of $\Theta_{V,W_1,\psi}(\sigma)$ and $\Theta_{V,W_2,\psi}(\sigma\otimes\det)$ is non-zero.
\end{itemize}

\begin{rem} We collect here some results concerning the equality between big and small theta lifts. Let $\pi\in\Irr(\Sp(W))$ and 
$\sigma\in\Irr(\O(V))$.
\begin{enumerate} 
\item Assume that $\pi$ is supercuspidal. A classical result of Kudla \cite{kud86} states that if $\Theta_{V,W,\psi_\tau}(\pi)$ is the first occurrence, then $\Theta_{V,W,\psi_\tau}(\pi)=\theta_{V,W,\psi_\tau}(\pi)$ and this representation is supercuspidal. The analogous statement also holds for $\sigma$.
 \item Assume that $\pi$ is square-integrable. 
 \begin{itemize}
\item If $m=n$ and $\Theta_{V,W,\psi_\tau}(\pi)\neq0$, then $\Theta_{V,W,\psi_\tau}(\pi)=\theta_{V,W,\psi_\tau}(\pi)$  and it is square-integrable. 
\item If $m=n+1$ and $\Theta_{V,W,\psi_\tau}(\pi)$ is the first occurrence, then $\Theta_{V,W,\psi_\tau}(\pi)=\theta_{V,W,\psi_\tau}(\pi)$ and it is square-integrable. 
\end{itemize} (See \cite[Corollary C.3]{gi14}.)
 \item Assume that $\pi$ is tempered. If $m=n+1$ and $\Theta_{V,W,\psi_\tau}(\pi)\neq0$, then $\Theta_{V,W,\psi_\tau}(\pi)=\theta_{V,W,\psi_\tau}(\pi)$ and it is tempered. (See \cite[Proposition C.4]{gi14}.)
 \item Assume that $\sigma$ is square-integrable. 
 \begin{itemize}
\item If $m=n+1$ and $\Theta_{V,W,\psi_\tau}(\sigma)\neq0$, then $\Theta_{V,W,\psi_\tau}(\sigma)=\theta_{V,W,\psi_\tau}(\sigma)$ and it is square-integrable. 
\item If $m=n$ and $\Theta_{V,W,\psi_\tau}(\sigma)$ is the first occurrence, then $\Theta_{V,W,\psi_\tau}(\sigma)=\theta_{V,W,\psi_\tau}(\sigma)$ and it is square-integrable.\end{itemize} (See \cite[Corollary C.3]{gi14}.) 
 \item Assume that $\sigma$ is tempered. If $m=n$ and $\Theta_{V,W,\psi_\tau}(\sigma)\neq0$, then $\Theta_{V,W,\psi_\tau}(\sigma)=\theta_{V,W,\psi_\tau}(\sigma)$ and it is tempered. (See \cite[Proposition C.4]{gi14}.)
\end{enumerate}
\end{rem}

\section{Degenerate principal series}\label{sec deg}

\subsection{Degenerate principal series and Rallis map}
Let $\FP$ be the Siegel parabolic subgroup of $\FG:=\Sp(\FW)$ stabilizing  the Lagrangian $\FX'=X_F\oplus \tau Y_F$. Then $\FP$ has a Levi decomposition $\FP=\FM\FN$ with $\FM\cong\GL(\FX')$. Let  $\det:\GL(\FX')\ra F^\times$ denotes the determinant map. For $s\in\BC$, the \emph{degenerate principal series} is defined as the normalized parabolic induction  
$$I_\FP^\FG(s):=\Ind_\FP^\FG(|\det|_F^s).$$ We set  
\begin{equation}\label{equ s0}s_0:=\frac{2m-2n-1}{2}.\end{equation}

Consider the Weil representation $\omega_{V_F,\FW,\psi_F}$ of $\FG\times\O(V_F)$, which is realized on the Schrodinger model $C_c^\infty(\BY^{\square'})$. The action of $\O(V_F)$ on this model is given by $$(\omega_{V_F,\FW,\psi_F}(h)\phi)(x)=\phi(h^{-1}x),\quad h\in\O(V_F),\ \phi\in C_c^\infty(\BY^{\square'}).$$ 
The \emph{Rallis map} 
$$R_\FW: C_c^\infty(\BY^{\square'})\lra I_\FP^\FG(s_0),\quad \phi\mapsto\CF_\phi,$$ is defined by
\begin{equation}\label{equ rallis-defn}\CF_\phi(g):=(\omega_{V_F,\FW,\psi_F}(g)\phi)(0),\quad g\in\FG.\end{equation} Let $R_\FW(V_F)$ denote the image of $R_\FW$.
It is known that $R_\FW$ is  $\FG$-equivariant, $\O(V_F)$-invariant, and
\begin{equation}\label{equ rallis1}R_\FW(V_F)\cong\Theta_{V_F,\FW,\psi_F}(1).\end{equation}

Similarly, let $\FQ$ be the Siegel parabolic subgroup of $\FH:=\O(\FV_\tau)$ stabilizing the Lagrangian $\tau V_F$, and $I_\FQ^\FH(|\det|_F^s)$ the degenerate principal series of $\FH$. In this setting, the Rallis map 
$$R_{\FV_\tau}: C_c^\infty(\BY_\tau^{\square'})\lra I_\FQ^\FH(-s_0),\quad\phi\mapsto\CF_\phi,$$ is defined by
$$\CF_\phi(h):=(\omega_{\FV_\tau,W_F,\psi_F}(h)\phi)(0),\quad h\in\FH.$$
Let $R_{\FV_\tau}(W_F)$ denote the image of $R_{\FV_\tau}$. It satisfies \begin{equation}\label{equ rallis2}R_{\FV_\tau}(W_F)\cong\Theta_{\FV_\tau,W_F,\psi_F}(1).\end{equation}

The following is a direct consequence of the seesaw identities (Lemma \ref{lem seesaw}).

\begin{cor}\label{cor seesaw}
For $\pi\in\Irr(\Sp(W))$ and $\sigma\in\Irr(\O(V))$, we have
$$\Hom_{\O(V_F)}\left(\Theta_{V,W,\psi}(\pi),\BC\right)\cong\Hom_{\Sp(W)}\left(R_{\FW}(V_F),\pi\right),$$ and
$$\Hom_{\Sp(W_F)}\left(\Theta_{V_\tau,W,\psi}(\sigma),\BC\right)\cong\Hom_{\O(V)}\left(R_{\FV_\tau}(W_F),\sigma\right).$$
\end{cor}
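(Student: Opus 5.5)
The plan is to combine Lemma \ref{lem seesaw} with the identifications (\ref{equ rallis1}) and (\ref{equ rallis2}) of the images of the Rallis maps. Take the first assertion. Lemma \ref{lem seesaw} gives
$$\Hom_{\O(V_F)}\left(\Theta_{V,W,\psi}(\pi),\BC\right)\cong\Hom_{\Sp(W)}\left(\Theta_{V_F,\FW,\psi_F}(1),\pi\right).$$
We have $\Theta_{V_F,\FW,\psi_F}(1)\cong R_\FW(V_F)$ as $\Sp(\FW)$-modules by (\ref{equ rallis1}). Restricting this isomorphism to $\Sp(W)\subset\Sp(\FW)$ gives an isomorphism of $\Sp(W)$-modules, so the right-hand Hom space equals $\Hom_{\Sp(W)}(R_\FW(V_F),\pi)$. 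The second assertion is handled the same way, using the second seesaw identity and (\ref{equ rallis2}) restricted to $\O(V)=\O(V_\tau)\subset\O(\FV_\tau)$.

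The only step that needs care is (\ref{equ rallis1}) itself, namely that the Rallis map identifies the maximal $\O(V_F)$-coinvariant quotient of $\omega_{V_F,\FW,\psi_F}$ with its image. I treat this as known, as the paper does. The argument behind it runs as follows.

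First, $R_\FW$ is $\O(V_F)$-invariant. For $h\in\O(V_F)$, the operator $\omega(h)$ acts on $C_c^\infty(\BY^{\square'})$ by translation and commutes with $\omega(g)$, so evaluation at $0$ kills $\omega(h)\phi-\phi$. Hence $R_\FW$ factors through the coinvariants $(\omega_{V_F,\FW,\psi_F})_{\O(V_F)}=\Theta_{V_F,\FW,\psi_F}(1)$.

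Second, the induced map from the coinvariants is injective. This is Rallis' result that the kernel of $\phi\mapsto\CF_\phi$ equals the span of the $\omega(h)\phi-\phi$. It follows from analysing the $\FP$-orbits on $\BY^{\square'}=\Hom(\FX'^*,V_F)$ via the moment map, or from the standard references (Rallis, Kudla--Rallis) in the stable or boundary range. Since the paper states (\ref{equ rallis1}) and (\ref{equ rallis2}) as known, I simply invoke them.

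I also have to be careful about the splittings. The $\Sp(W)$-action on $R_\FW(V_F)$ must be the restriction of the $\Sp(\FW)$-action used in Lemma \ref{lem seesaw}. This is exactly the compatibility recorded in (\ref{equ comp-seesaw1}) and (\ref{equ comp-seesaw2}), so the corollary follows with no further work.
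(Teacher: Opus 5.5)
Your proposal is correct and matches the paper's argument. The corollary is exactly Lemma \ref{lem seesaw} combined with the identifications (\ref{equ rallis1}) and (\ref{equ rallis2}) of $\Theta_{V_F,\FW,\psi_F}(1)$ and $\Theta_{\FV_\tau,W_F,\psi_F}(1)$ with the Rallis images, restricted to $\Sp(W)$ and $\O(V)$; the paper likewise takes these identifications as known, so your extra sketch of why they hold goes beyond what it records.
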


\subsection{Structures of the degenerate principal series I}
We now recall the structure of the degenerate principal series as a $\FG$-module or $\FH$-module; a summary can be found in \cite[Sect. 7]{gi14}.

Let $V_{\rc,F}$ be the companion space of $V_F$. Associated to $V_{\rc,F}$, we have the big theta lift $\Theta_{V_{\rc,F},\FW,\psi_F}(1)$ and the image $R_\FW(V_{\rc,F})$ of the Rallis map. Moreover,  $$\Theta_{V_{\rc,F},\FW,\psi_F}(1)\cong R_\FW(V_{\rc,F})\subset I_\FP^\FG(s_0).$$ 

Let $\wt{V}_F$ be the \emph{complementary} space of $V_F$; that is, the unique quadratic space in the Witt tower of $V_F$ satisfying  $$\dim \wt{V}_F+\dim V_F=4n+2.$$ 
Similarly, let $\wt{V}_{\rc,F}$ be the complementary  space of $V_{\rc,F}$. This yields $\FG$-submodules $R_\FW(\wt{V}_F)$ and $R_\FW(\wt{V}_{\rc,F})$ of $I_\FP^\FG(-s_0)$.
 
\begin{prop}\label{prop degps-symp}
\begin{enumerate}
	\item If  $m>n$, then 
	\begin{enumerate}
	\item $I_\FP^\FG(s_0)=R_\FW(V_F)+R_\FW(V_{\rc,F})$; 
	\item $R_\FW(V_F)\cap R_\FW(V_{\rc,F})$ is irreducible and is the maximal semisimple submodule of $I_\FP^\FG(s_0)$;
	\item  we have
	$$R_\FW(V_F)/R_\FW(V_F)\cap R_\FW(V_{\rc,F})\cong R_\FW(\wt{V}_F)$$ and $$R_\FW(V_{\rc,F})/R_\FW(V_F)\cap R_\FW(V_{\rc,F})\cong R_\FW(\wt{V}_{\rc,F});$$
	\item $R_\FW(\wt{V}_F)$ and $R_\FW(\wt{V}_{\rc,F})$ are irreducible.  
	\end{enumerate}  In particular, we have the exact sequences
	$$
	0\lra R_\FW(V_F)\lra I_\FP^\FG(s_0)\lra R_\FW(\wt{V}_{\rc,F})\lra0$$ and $$
	0\lra R_\FW(V_{\rc,F})\lra I_\FP^\FG(s_0)\lra R_\FW(\wt{V}_F)\lra0.
	$$
	\item If $m\leq n$,  then 
	\begin{enumerate}
	\item $R_\FW(V_F)$, $R_\FW(V_{\rc,F})$ and $R_\FW(\wt{V}_F)\cap R_\FW(\wt{V}_{\rc,F})$ are all irreducible; 
	\item $R_\FW(V_F)\oplus R_\FW(V_{\rc,F})$ is the maximal semisimple submodule of $I_\FP^\FG(s_0)$;
	\item we have the exact sequence
	$$0\lra R_\FW(V_F)\oplus R_\FW(V_{\rc,F})\lra I_\FP^\FG(s_0)\lra R_\FW(\wt{V}_F)\cap R_\FW(\wt{V}_{\rc,F})\lra0.$$
	\end{enumerate}
	\end{enumerate}
\end{prop}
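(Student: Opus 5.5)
This proposition is essentially the known structure theorem for the Siegel degenerate principal series $I_\FP^\FG(s)$ of $\FG=\Sp(\FW)\cong\Sp_{4n}(F)$ at the point $s_0=\frac{2m-2n-1}{2}$, in the form of Kudla--Rallis and Kudla--Sweet, as summarized in \cite[Sect. 7]{gi14}. My plan is to reduce to that setting and import the result rather than reprove it. First I will check that our objects coincide with the standard ones. The space $\FW$ is a symplectic $F$-space of dimension $4n$, and $\FX'=X_F\oplus\tau Y_F$ is a Lagrangian, so $\FP$ is a Siegel parabolic. The Rallis map $\phi\mapsto(\omega_{V_F,\FW,\psi_F}(g)\phi)(0)$ on the Schr\"odinger model $C_c^\infty(\BY^{\square'})$ attached to the polarization (\ref{equ polar-1}) is exactly the classical map into $\Ind_\FP^\FG(\chi_{V_F}|\det|^{s_0})$. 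Here $s_0=\frac{\dim V_F}{2}-\frac{\dim\FW/2+1}{2}$, which is the value above. The character $\chi_{V_F}$ is the quadratic character attached to $\disc V_F$.

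The one point needing care is that the paper writes $I_\FP^\FG(s)=\Ind(|\det|_F^s)$ with no quadratic character. So I will either check that the splitting and model conventions absorb $\chi_{V_F}$, or read the statement with the character $\chi_{V_F}$ implicitly included. Since $V_F$ and $V_{\rc,F}$ have the same discriminant, they land in the same induced representation. Since $\wt V_F$ lies in the Witt tower of $V_F$, it also has the same discriminant. The relation $\dim\wt V_F=4n+2-2m$ gives $-s_0$, so $R_\FW(\wt V_F)\subset I_\FP^\FG(-s_0)$, as stated.

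Next I will invoke the Kudla--Rallis results in the two ranges. For $m>n$, i.e.\ $s_0>0$, I will use three facts:
\begin{itemize}
\item \emph{(i) Covering:} $I(s_0)=R(V_F)+R(V_{\rc,F})$. This follows from the orbit filtration of $C_c^\infty$ and the fact that the two quadratic spaces of given dimension and discriminant exhaust the possible Gram matrices on the open $\FP$-orbit.
\item \emph{(ii) Intersection:} $R(V_F)\cap R(V_{\rc,F})$ is irreducible and is the unique irreducible submodule (the socle).
\item \emph{(iii) Quotients:} the quotients are identified via the intertwining operator $M(s_0):I(s_0)\to I(-s_0)$. Its image of $R(V_F)$ is $R(\wt V_F)$, and its kernel restricted to $R(V_F)$ is the intersection. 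The irreducibility of $R(\wt V_F)$ in (d) follows because in the range $\dim\wt V_F\le 2n+1$, i.e.\ the stable range for the opposite direction, these lifts of the trivial representation are irreducible.
\end{itemize}
The two exact sequences then follow formally: $I/R(V_F)\cong R(V_{\rc,F})/(R(V_F)\cap R(V_{\rc,F}))\cong R(\wt V_{\rc,F})$, and symmetrically for the other.

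For $m\le n$, i.e.\ $s_0<0$, I will dualize. The lifts $R(V_F)$ and $R(V_{\rc,F})$ are irreducible, being theta lifts of the trivial representation from a small orthogonal group. They are non-isomorphic, since Howe duality (Theorem \ref{thm howe}) distinguishes them by their different Witt towers. Hence their sum is direct and semisimple. Applying the contragredient, $I(s_0)^\vee\cong I(-s_0)$, to the $m'>n$ sequences for $\wt V_F$ and $\wt V_{\rc,F}$ yields the stated exact sequence, with quotient $R(\wt V_F)\cap R(\wt V_{\rc,F})$.

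The main obstacle I expect is the character and normalization bookkeeping: matching $|\det|_F^{s}$ with $\chi_{V_F}|\det|^s$, matching $\Sp(\FW)$ with $\Sp_{4n}(F)$ together with its splitting, and handling the boundary case $m=n+\frac12$, which does not occur. If the character does not cancel, I would simply note that the statement should be read with $\chi_{V_F}$ included, because the proofs in \cite{gi14} are insensitive to this.
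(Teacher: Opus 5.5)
Your plan matches the paper, which gives no independent argument and simply recalls this as the Kudla--Rallis structure theorem for the Siegel degenerate principal series of $\Sp(\FW)$, as summarized in \cite[Sect. 7]{gi14}; your remark that $I_\FP^\FG(s)$ should be read with the quadratic character $\chi_{V_F}$ included is accurate and harmless, since that theorem holds for any quadratic character. One small correction: for $m\le n$, the non-isomorphism $R_\FW(V_F)\ncong R_\FW(V_{\rc,F})$ does not follow from Howe duality for a single dual pair, but from the conservation relation (Theorem \ref{thm conservation} applied to $\Sp(\FW)$, since otherwise the first occurrence indices in the two towers would sum to at most $4m<8n+4$) or from comparing twisted Jacquet modules.
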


Let $\wt{W}_F$ be the unique symplectic space in the Witt tower of $W_F$ satisfying $$\dim \wt{W}_F+\dim W_F=4m-2.$$
This gives rise to an $\FH$-submodule $R_{\FV_\tau}(\wt{W}_F)$ of $I_\FQ^\FH(s_0)$.

\begin{prop}\label{prop degps-orth}
\begin{enumerate}
\item If $m\leq n$, then
\begin{enumerate}
\item $I_\FQ^\FH(-s_0)=R_{\FV_\tau}(W_F)+R_{\FV_\tau}(W_F)\otimes\det$; 
\item $R_{\FV_\tau}(W_F)\cap R_{\FV_\tau}(W_F)\otimes\det$ is irreducible and is the maximal semisimple submodule of $I_\FQ^\FH(-s_0)$; 
\item we have $$R_{\FV_\tau}(W_F)/R_{\FV_\tau}(W_F)\cap R_{\FV_\tau}(W_F)\otimes\det\cong R_{\FV_\tau}(\wt{W}_F)$$ and  $$R_{\FV_\tau}(W_F)\otimes\det/R_{\FV_\tau}(W_F)\cap R_{\FV_\tau}(W_F)\otimes\det\cong R_{\FV_\tau}(\wt{W}_F)\otimes\det.$$ 
\item $R_{\FV_\tau}(\wt{W}_F)$ is irreducible.  
\end{enumerate}
In particular, we have the exact sequences
$$0\lra R_{\FV_\tau}(W_F)\lra I_\FQ^\FH(-s_0)\lra R_{\FV_\tau}(\wt{W}_F)\otimes\det\lra0$$ and
$$0\lra R_{\FV_\tau}(W_F)\otimes\det\lra I_\FQ^\FH(-s_0)\lra R_{\FV_\tau}(\wt{W}_F)\lra0.$$
\item If $m>n$, then
\begin{enumerate}
\item$R_{\FV_\tau}(W_F)$ and $R_{\FV_\tau}(\wt{W}_F)\cap R_{\FV_\tau}(\wt{W}_F)\otimes\det$ are irreducible; 
\item $R_{\FV_\tau}(W_F)\bigoplus R_{\FV_\tau}(W_F)\otimes\det$ is the maximal semisimple submodule of $I_\FQ^\FH(-s_0)$;
\item we have the exact sequence
$$0\lra R_{\FV_\tau}(W_F)\oplus R_{\FV_\tau}(W_F)\otimes\det\lra I_\FQ^\FH(-s_0)\lra R_{\FV_\tau}(\wt{W}_F)\cap R_{\FV_\tau}(\wt{W}_F)\otimes\det\lra0.$$
\end{enumerate}
\end{enumerate}
\end{prop}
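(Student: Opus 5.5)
This proposition is the orthogonal analogue of Proposition \ref{prop degps-symp}. I would not prove it from scratch but deduce it from the known structure of Siegel degenerate principal series of split even orthogonal groups (Kudla--Rallis, Sweet, Ban--Jantzen; summarized in \cite[Sect. 7]{gi14}), transported to our setting. The first step is to identify the data. The group $\FH=\O(\FV_\tau)$ is a split orthogonal group of an $F$-quadratic space of dimension $4m$, and $\FQ$ is its Siegel parabolic with Levi $\GL(\tau V_F)\cong\GL_{2m}(F)$. The Weil representation $\omega_{\FV_\tau,W_F,\psi_F}$ restricted to $\FH\times\Sp(W_F)$ is the usual one. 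Realizing it on the mixed model $C_c^\infty(\BY_\tau^{\square'})=C_c^\infty(W_F\otimes V_F)$, evaluation at $0$ gives the Rallis map into $I_\FQ^\FH(-s_0)$. One checks the exponent: the Siegel parabolic of $\O_{4m}$ acts on $\phi(0)$ by $|\det|_F^{n}$ unnormalized, and $\delta_\FQ^{1/2}=|\det|_F^{(2m-1)/2}$, so the normalized exponent is $n-(2m-1)/2=-s_0$.

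The second step is to list the symplectic spaces whose Weil representations map into the same degenerate principal series. For the dual pair $(\O(\FV_\tau),\Sp(W'_F))$ with $\dim W'_F=2n'$, the Rallis image lies in $I_\FQ^\FH(n'-(2m-1)/2)$. Within the single Witt tower of symplectic spaces, only $W_F$ gives $-s_0$. The space $\wt W_F$, with $2n'=4m-2-2n$, gives $+s_0$, which is the contragredient point. The role played by the companion tower $V_{\rc,F}$ in Proposition \ref{prop degps-symp} is now played by the twist by $\det$ (equivalently, the sign character of $\O$). Here I would invoke the standard fact that $R_{\FV_\tau}(W_F)\otimes\det$ is the image of the Rallis map composed with the $\det$-twist, that is, the theta lift of $\det$ in place of $1$. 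This is legitimate because $\det$ is trivial on the Siegel Levi's $\FN$ and $\det|_{\FQ}$ agrees with $\det$ on the restriction to $\GL$ up to sign, so the twisted module still sits inside $I_\FQ^\FH(-s_0)$.

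The third step is to import the known structure results. For $O_{2N}$ split with $N=2m$ and the point $-s_0=n-(2m-1)/2$, the results (Kudla--Rallis for the ``$m\le n$'' range, i.e. $s\ge$ or $\le 0$ appropriately) say the following. When $-s_0\ge -\tfrac12$ (i.e. $m\le n$), the two Rallis submodules $R(W_F)$ and $R(W_F)\otimes\det$ span the whole space. Their intersection is irreducible and is the unique irreducible submodule, and the quotients are identified with Rallis images at $s_0$ via the intertwining operator $M(-s_0)$. When $m>n$, the two submodules are irreducible, their sum is direct and forms the socle, and the quotient is the intersection of the complementary Rallis images. Parts (c) then follow by applying the standard intertwining operator $I_\FQ^\FH(-s_0)\to I_\FQ^\FH(s_0)$, which maps $R(W_F)$ onto $R(\wt W_F)$ (Kudla--Rallis ``duality''), together with irreducibility of the small lifts, using Howe duality (Theorem \ref{thm howe}) for $1$ and $\det$.

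The main obstacle is matching conventions, namely confirming that the $\det$-twist genuinely replaces the companion tower. For orthogonal groups all $4m$-dimensional split spaces form one tower, so this must come from the $\O$ versus $\SO$ distinction and the $\det$-character. I would verify this carefully by checking the $\FQ$-equivariance of $\CF_\phi\cdot\det$ and comparing $K$-types with the reference before citing it.
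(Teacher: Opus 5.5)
Your proposal takes the same route as the paper. The paper gives no independent argument: it simply recalls the structure of the Siegel degenerate principal series of the split group $\FH=\O(\FV_\tau)\cong\O_{4m}(F)$ from \cite[Sect. 7]{gi14}. Your identification of the data is exactly what is needed to transport that result: $\FV_\tau$ is split, the Siegel Levi is $\GL_{2m}(F)$, the Rallis image of $W_F$ lies at $-s_0$, and that of $\wt{W}_F$ lies at $s_0$.

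Two small slips need fixing.
\begin{enumerate}
\item The case $m\le n$ corresponds to $-s_0=n-m+\frac12\ge\frac12$, not to $-s_0\ge-\frac12$. The latter condition would wrongly place $m=n+1$ in case (1).
\item $\det$ is identically trivial on $\FQ\subset\SO(\FV_\tau)$, not merely ``up to sign''. So $R_{\FV_\tau}(W_F)\otimes\det=\{\CF_\phi\cdot\det\}$ sits inside $I_\FQ^\FH(-s_0)$ as the twist of $\Theta_{\FV_\tau,W_F,\psi_F}(1)$ by a character of $\FH$. It is not a theta lift of a ``$\det$'' character of $\Sp(W_F)$, since $\Sp(W_F)$ has no such character.
\end{enumerate}
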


\subsection{Structures of the degenerate principal series II}\label{subsec deg-principal-II} Set $G=\Sp(W)$ and $H=\O(V)$.
In this subsection, we study the structure of the degenerate principal series $I_\FP^\FG(s)$ (resp. $I_\FQ^\FH(s)$) as a $G$-module (resp. $H$-module).

Let $r$ be the Witt index of $V$. Recall from Sect. \ref{subsec explicit orbit} that $\CX=\FQ\bs\FH$, viewed as the set of Lagrangians of $\FV_\tau$, admits as a disjoint decomposition
$$\CX=\bigsqcup_{k=0}^r\CX_r,\quad \CX_k=\left\{L\in\CX\mid\dim_EL_\an=k\right\}.$$ A direct verification shows that
$$\overline{\CX}_k=\bigsqcup_{k'\geq k}\CX_{k'},$$ where $\overline{\CX}_k$ denotes the closure of $\CX_k$ in $\CX$. 

Applying the Geometric Lemma of Bernstein--Zelevinsky, this stratification yields an $H$-filtration 
$$0=I_0(s)\subset I_1(s)\subset\cdots\subset I_{r+1}(s)=I_\FQ^\FH(s),$$ with
$$I_k(s):=\left\{f\in I_\FQ^{\FH}(s)\mid f|_{\overline{\CX}_k}=0  \right\}.$$ For $ 0\leq k\leq r$, the successive quotients are denoted by
$$J_k(s):=I_{k+1}(s)/I_k(s).$$ 

Recall from Lemma \ref{lem orbits-coho2} (see Sect. \ref{subsec explicit orbit} for notation) that
$$\CX_k=\bigsqcup_{V'_{2m-2k,F}\in\RH^1(E/F,\O(V_{2m-2k}))} H\cdot\left(X_k\oplus \tau V'_{2m-2k,F}\right),$$ and from Lemma \ref{lem stab_O} that the stabilizer of $L_k:=X_k\oplus \tau V'_{2m-2k,F}$ is $$R_k:=\Stab_{H}(L_k)=\left(\GL(X_k)\times\O(V'_{2m-2k,F})\right)\ltimes N,$$ which is contained in the parabolic subgroup
$$Q_k:=\Stab_H(X_k)=\left(\GL(X_k)\times\O(V_{2m-2k})\right)\ltimes N.$$ Now suppose that $L_k=g_k^{-1}\cdot\tau V_F$ for some $g_k\in\FQ\bs\FH$. Then $g_kR_kg_k^{-1}\subset\FQ$. For a representation $\chi$ of $\FQ$, we define a representation $\chi^{g_k}$  of $R_k$ by $$\chi^{g_k}(r)=\chi(g_krg_k^{-1}),\quad r\in R_k.$$

\begin{prop}\label{prop degps-orth-2}
For $H=\O(V)$ and $0\leq k\leq r$, we have
$$J_k(s)\cong\bigoplus_{V'_{2m-2k,F}\in\RH^1(E/F,\O(V_{2m-2k}))} \Ind_{Q_k}^H\left(|\det{_{X_k}}|_E^{s+\frac{k}{2}}\otimes C_c^\infty\left(\O(V_{2m-2k,F})\bs\O(V_{2m-2k})\right)\right).$$
In particular, 
$$J_0(s)=I_1(s)\cong\bigoplus_{V'_F\in\RH^1(E/F,\O(V))}C_c^\infty\left(\O(V'_F)\bs\O(V)\right)$$ is independent of $s$.
\end{prop}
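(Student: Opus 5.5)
The plan is the standard Bernstein--Zelevinsky (Mackey theory) analysis of the restriction of $I_\FQ^\FH(s)$ to $H$ along the stratification (\ref{equ disj}). First, since $\overline{\CX}_k=\bigsqcup_{k'\geq k}\CX_{k'}$, each $\CX_k$ is open in its closure $\overline{\CX}_k$, and $\overline{\CX}_{k+1}$ is closed in $\CX$. Hence restriction of functions gives an $H$-isomorphism
$$J_k(s)=I_{k+1}(s)/I_k(s)\cong\left\{f:\FQ\cdot\CX_k\to\BC\ \text{smooth},\ f(qx)=\delta_\FQ^{1/2}|\det|_F^s(q)f(x),\ f \text{ compactly supported modulo }\FQ\right\}.$$
Here compact support means support in the preimage of a compact subset of $\CX_k$. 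This identification uses the exactness of the sequence $0\to C_c^\infty(U)\to C_c^\infty(Z)\to C_c^\infty(Z\setminus U)\to 0$ for $U$ open in a locally profinite space $Z$, applied to the $\FQ$-equivariant version. It is the standard first step of the Geometric Lemma.

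Next, by Lemma \ref{lem orbits-coho2}, $\CX_k$ is a finite disjoint union of $H$-orbits $H\cdot L_k$, indexed by $\RH^1(E/F,\O(V_{2m-2k}))$. Each orbit is open and closed in $\CX_k$, since the orbits are finite in number and all locally closed of the same dimension. So $J_k(s)$ splits as a direct sum over orbits. For a single orbit $H\cdot L_k\cong R_k\bs H$, with $L_k=g_k^{-1}\tau V_F$, the map $f\mapsto(h\mapsto f(g_kh))$ identifies that summand with the compact induction $\mathrm{ind}_{R_k}^H\bigl((\delta_\FQ^{1/2}|\det|_F^s)^{g_k}\bigr)$. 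This is unnormalized induction of a character of $R_k$.

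Then I induce in stages through $R_k\subset Q_k$:
$$\mathrm{ind}_{R_k}^H\chi=\mathrm{ind}_{Q_k}^H\,\mathrm{ind}_{R_k}^{Q_k}\chi.$$
Since $N\subset R_k$ and $Q_k/N=\GL(X_k)\times\O(V_{2m-2k})\supset R_k/N=\GL(X_k)\times\O(V'_{2m-2k,F})$, the inner induction is
$$\chi|_{\GL(X_k)}\otimes C_c^\infty\bigl(\O(V'_{2m-2k,F})\bs\O(V_{2m-2k})\bigr),$$
provided $\chi$ is trivial on $\O(V'_{2m-2k,F})$. I expect this to hold because $\O(V'_{2m-2k,F})$ is reductive, so it has no nontrivial positive characters and $|\det|$ vanishes on it. The final induction from $Q_k$ is compact induction, which equals full induction since $Q_k\bs H$ is compact.

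The main obstacle is computing the character $\chi=(\delta_\FQ^{1/2}|\det|_F^s)^{g_k}$ on $\GL(X_k)$ and renormalizing to normalized induction, which requires dividing by $\delta_{Q_k}^{1/2}$. I would compute directly. An element $a\in\GL(X_k)$ acts on $L_k=X_k\oplus\tau V'_{2m-2k,F}$ by $a$ on $X_k$, viewed as a $2k$-dimensional $F$-space, and trivially on the rational part. So $|\det_F(a|_{L_k})|_F=|\det a|_E$, and the character contributes $|\det a|_E^{s+\rho_\FQ}$ with $\rho_\FQ=(2m-1)/2$, since $\delta_\FQ=|\det|_F^{2m-1}$ for $\O(\FV_\tau)$ with $\dim\FV_\tau=4m$. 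On the other hand, $\delta_{Q_k}(a)=|\det a|_E^{2m-k-1}$. The normalized exponent is therefore
$$s+\tfrac{2m-1}{2}-\tfrac{2m-k-1}{2}=s+\tfrac{k}{2},$$
which matches the claim. Two further checks remain. The $N$-part of the character must be trivial, which holds since $N$ is unipotent. The modulus characters $\delta_{R_k}$ and $\delta_{Q_k}$ must also be compatible in the stage induction; this holds because $\O(V'_{2m-2k,F})\bs\O(V_{2m-2k})$ carries an invariant measure, both groups being unimodular.

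For $k=0$, the orbits are open, $Q_0=H$ and $R_0=\O(V'_F)$, which gives the stated description of $J_0(s)=I_1(s)$, visibly independent of $s$.
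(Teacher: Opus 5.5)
Your proposal is correct and follows the paper's proof. The steps are the same: Mackey theory along the stratification $\CX=\bigsqcup_k\CX_k$, identification of each orbit's contribution with $\textrm{un-Ind}_{R_k}^H\bigl((\chi_s\delta_\FQ^{1/2})^{g_k}\bigr)$, induction in stages through $Q_k$, and the same exponent computation $s+\frac{2m-1}{2}-\frac{2m-k-1}{2}=s+\frac{k}{2}$. You also give details the paper leaves implicit, such as openness of the orbits in $\CX_k$ and triviality on $N$. One correction: triviality of the character on $\O(V'_{2m-2k,F})$ does not follow from reductivity alone ($\GL_n$ has $|\det|$). It holds because such elements act trivially on $X_k$ and with determinant $\pm1$ on $\tau V'_{2m-2k,F}$.
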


\begin{proof}
Let $\chi_s$ denote the character $|\det|_F^s:\FQ\ra\GL(\tau V_F)\ra\BC^\times$. Then we have
$$\begin{aligned}J_k(s)&\cong\bigoplus_{V'_{2m-2k,F}\in\RH^1(E/F,\O(V_{2m-2k}))} \textrm{un-Ind}_{R_{k}}^H\left((\chi_s\delta_\FQ^{1/2})^{g_k}\right)\\
&\cong\bigoplus_{V'_{2m-2k,F}\in\RH^1(E/F,\O(V_{2m-2k}))} \textrm{un-Ind}_{Q_{k}}^H\left(\textrm{un-Ind}_{R_k}^{Q_k}\left((\chi_s\delta_\FQ^{1/2})^{g_k}\right)\right)\\
&\cong\bigoplus_{V'_{2m-2k,F}\in\RH^1(E/F,\O(V_{2m-2k}))} \Ind_{Q_{k}}^H\left((\chi_s\delta_\FQ^{1/2}\delta_{Q_k}^{-1/2})|_{\GL(X_k)}\otimes C_c^\infty\left(\O(V'_{2m-2k,F})\bs\O(V_{2m-2k})\right)\right),
\end{aligned}$$ where $\textrm{un-Ind}$  denotes the unnormalized induction.

For $a\in\GL(X_k)$, we compute
$$\begin{aligned}(\chi_s\delta_\FQ^{1/2}\delta_{Q_k}^{-1/2})(a)&=
|\N_{E/F}(\det(a))|_F^s\cdot|\N_{E/F}(\det(a))|_F^{\frac{4m-2m-1}{2}}\cdot|\det(a)|_E^{-\frac{2m-k-1}{2}}\\
&=|\det(a)|_E^{s+\frac{k}{2}}.
\end{aligned}$$
Thus the proposition follows.\end{proof}

In the same way, $I_\FP^\FG(s)$ possesses a $G$-filtration 
$$0=I_0(s)\subset I_1(s)\subset\cdots\subset I_{n+1}(s)=I_\FP^\FG(s)$$ with successive quotients
$J_k(s)=I_{k+1}(s)/I_k(s)$ (for $0\leq k\leq n$) described as follows.

\begin{prop}\label{prop degps-symp-2}
For $G=\Sp(W)$ and $0\leq k\leq n$, we have
$$J_k(s)=\Ind_{P_k}^G\left(|\det{_{X_k}}|_E^{s+\frac{k}{2}}\otimes C_c^\infty\left(\Sp(W_{F,2n-2k})^\tau\bs\Sp(W_{2n-2k})\right)\right),$$ where $X_k$ is the isotropic subspace of $W$ defined in Sect. \ref{subsec explicit orbit} and $P_k$ is the parabolic subgroup of $G$ that stablizes $X_k$. In particular, $$J_0(s)=I_1(s)\cong C_c^\infty\left(\Sp(W_F)^\tau\bs\Sp(W) \right)$$ is independent of $s$.
\end{prop}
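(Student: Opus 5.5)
We follow the proof of Proposition \ref{prop degps-orth-2}, now for $G=\Sp(W)$ acting on $\CX=\FP\bs\FG$, the Lagrangians of $\FW$.

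\textbf{Orbits and filtration.} By Proposition \ref{prop orbits}(1), each stratum $\CX_k=\{L\mid \dim_E L_\an=k\}$, $0\le k\le n$, is a single $G$-orbit, with representative $L_k=X_k\oplus X'_{F,n-k}\oplus\tau Y'_{F,n-k}$ as in (\ref{equ lag-k}). The closure relation $\overline{\CX}_k=\bigsqcup_{k'\ge k}\CX_{k'}$ holds because $\dim L_\an$ is upper semicontinuous: $L\mapsto \dim(L\cap\tau L)$ is the dimension of the intersection of two points of the Lagrangian Grassmannian, so it can only jump up under specialization. Setting $I_k(s)=\{f\mid f|_{\overline{\CX}_k}=0\}$, the Bernstein--Zelevinsky geometric lemma identifies
$$J_k(s)\cong \textrm{un-Ind}_{R_k}^G\bigl((\chi_s\delta_\FP^{1/2})^{g_k}\bigr),$$
where $g_k\in\FG$ satisfies $L_k=g_k^{-1}\FX'$, and $R_k=(\GL(X_k)\times\Sp(W_{2n-2k,F})^\tau)\ltimes N$ by Lemma \ref{lem stab_Sp}.

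\textbf{Induction in stages.} We write
$$\textrm{un-Ind}_{R_k}^G=\textrm{un-Ind}_{P_k}^G\circ\textrm{un-Ind}_{R_k}^{P_k},\qquad P_k=(\GL(X_k)\times\Sp(W_{2n-2k}))\ltimes N.$$
The character $(\chi_s\delta_\FP^{1/2})^{g_k}$ is trivial on $N$, since $N$ is unipotent. It is also trivial on $\Sp(W_{2n-2k,F})^\tau$: this group acts on $L_{k,\rat}$ through a symplectic group, so the determinant is $1$. Hence
$$\textrm{un-Ind}_{R_k}^{P_k}(\cdot)=\chi'\otimes C_c^\infty\bigl(\Sp(W_{2n-2k,F})^\tau\bs\Sp(W_{2n-2k})\bigr),$$
where $\chi'$ is the restriction of the character to $\GL(X_k)$. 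Passing to normalized induction introduces the factor $\delta_{P_k}^{-1/2}$.

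\textbf{Exponent computation (the main obstacle).} For $a\in\GL(X_k)$, acting on $L_k$ through $L_{k,\an}=X_k$ viewed over $F$, the $F$-determinant is $\N_{E/F}(\det a)$. This gives:
\begin{itemize}
\item $\chi_s(a)=|\det a|_E^{s}$;
\item $\delta_\FP(a)=|\det a|_E^{(2n+1)}$, since $\FP$ is the Siegel parabolic of $\Sp_{4n}(F)$, whose modulus is $|\det|_F^{2n+1}$;
\item $\delta_{P_k}(a)=|\det a|_E^{2n-k+1}$, the Siegel-type modulus for $\Sp_{2n}(E)$.
\end{itemize}
Combining these,
$$s+\tfrac{2n+1}{2}-\tfrac{2n-k+1}{2}=s+\tfrac{k}{2}.$$

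The delicate point is justifying that conjugation by $g_k$ carries $a$ to an element of $\FM$ with $F$-determinant $\N_{E/F}(\det a)$, and not its inverse. To settle this I would choose $g_k$ explicitly, exchanging $\tau X_{F,k}$ with $\tau$ times the dual span. Then $g_kag_k^{-1}$ acts on $\FX'$ as $a$ on $X_k$ and trivially elsewhere, up to the contragredient on the dual part, which needs checking.

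For $k=0$ the orbit is open, $R_0=\Sp(W_F)^\tau$, and no parabolic appears. This yields $J_0(s)=I_1(s)\cong C_c^\infty(\Sp(W_F)^\tau\bs\Sp(W))$, which is independent of $s$.
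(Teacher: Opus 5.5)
Your proposal is correct and is essentially the paper's argument. The paper proves only the orthogonal analogue (Proposition \ref{prop degps-orth-2}) in detail, using the geometric-lemma filtration, induction in stages through the parabolic stabilizing $X_k$, and modulus bookkeeping, and says the symplectic case goes the same way; your exponent count $s+\frac{2n+1}{2}-\frac{2n-k+1}{2}=s+\frac{k}{2}$ is the correct symplectic version of that bookkeeping. The sign issue you flag is not actually delicate and needs no explicit choice of $g_k$: $g_k^{-1}$ maps $\FX'$ isomorphically onto $L_k$ and satisfies $r\circ g_k^{-1}=g_k^{-1}\circ(g_krg_k^{-1})$ on $\FX'$, so the relevant determinant is simply $\det_F(r|_{L_k})=\N_{E/F}(\det a)$; the contragredient action on the dual of $X_k$ never enters, because that dual is not contained in $L_k$.
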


\section{Multiplicities}\label{sec mult}
For $\pi\in\Rep(\Sp(W))$ and $\sigma\in\Rep(\O(V))$), we set, for brevity, 
$$m_{W_F}(\pi):=\dim\Hom_{\Sp(W_F)}(\pi,\BC),\quad  m_{V'_F}(\sigma):=\dim\Hom_{\O(V'_F)}(\sigma,\BC),$$ where $V'_F\in\RH^1(E/F,\O(V))$.

\subsection{The case $m>n$}   
\begin{thm}\label{thm symp-orth} Let $\pi\in\Irr(\Sp(W))$.
Assume that $m>n$ and that $\Theta_{V,W,\psi_\tau}(\pi)$ is the first occurrence. 
\begin{enumerate}
\item If $\pi$ is tempered, then 
$$m_{W_F}(\pi^\vee)\leq m_{V_F}\left(\Theta_{V,W,\psi_\tau}(\pi)\right).$$
\item If $\pi$ is square-integrable and $m=n+1$, then
$$m_{W_F}(\pi^\vee)= m_{V_F}\left(\Theta_{V,W,\psi_\tau}(\pi)\right).$$
\item If $\pi$ is supercuspidal, then $$m_{W_F}(\pi^\vee)= m_{V_F}\left(\Theta_{V,W,\psi_\tau}(\pi)\right).$$
\end{enumerate}
\end{thm}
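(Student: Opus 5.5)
The plan is to translate both sides into spaces of $\Sp(W)$-homomorphisms out of the degenerate principal series $I_\FP^\FG(s_0)$, and to compare them via its two structures: as a $\FG$-module (Proposition \ref{prop degps-symp}) and as a $G$-module (Proposition \ref{prop degps-symp-2}).

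\textbf{Step 1: the orthogonal side.} By Lemma \ref{lem theta relation}, $\Theta_{V,W,\psi_\tau}(\pi)=\Theta_{V,W,\psi}(\pi_\tau)$. Corollary \ref{cor seesaw} then gives
$$m_{V_F}\bigl(\Theta_{V,W,\psi_\tau}(\pi)\bigr)=\dim\Hom_{\Sp(W)}\bigl(R_\FW(V_F),\pi_\tau\bigr).$$

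\textbf{Step 2: the symplectic side.} Restrict $I_\FP^\FG(s_0)$ to $G=\Sp(W)$ and use the filtration of Proposition \ref{prop degps-symp-2}. The bottom piece is
$$J_0=C_c^\infty\bigl(\Sp(W_F)^\tau\bs\Sp(W)\bigr).$$
By Frobenius reciprocity and the conjugation by $c_\tau$,
$$\Hom_G(J_0,\pi_\tau)=\Hom_{\Sp(W_F)^\tau}(\pi_\tau^\vee,\BC)\cong\Hom_{\Sp(W_F)}(\pi^\vee,\BC),$$
whose dimension is $m_{W_F}(\pi^\vee)$. For $k\ge1$, $J_k(s_0)$ is induced from $P_k$ with the character $|\det_{X_k}|_E^{s_0+k/2}$ on $\GL(X_k)$. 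Second adjointness gives
$$\Ext^i_G(J_k,\pi_\tau)=\Ext^i_{M_k}\bigl(\cdots,R_{\overline{P}_k}(\pi_\tau)\bigr).$$
Since $s_0=\tfrac{2m-2n-1}{2}>0$, the exponent $s_0+k/2$ has strictly positive real part. By Casselman's criterion the central exponents of the Jacquet module of the tempered representation $\pi_\tau$ never match this, so these $\Ext$ groups vanish for all $i$. Running the long exact sequences up the filtration then yields
$$\Hom_G\bigl(I_\FP^\FG(s_0),\pi_\tau\bigr)\cong\Hom_G(J_0,\pi_\tau),$$
which has dimension $m_{W_F}(\pi^\vee)$. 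In the supercuspidal case this is even simpler: $\pi_\tau$ is projective and injective, and its proper Jacquet modules vanish.

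\textbf{Step 3: comparing the two.} Apply $\Hom_G(-,\pi_\tau)$ to the sequence $0\to R_\FW(V_F)\to I_\FP^\FG(s_0)\to R_\FW(\wt V_{\rc,F})\to0$ from Proposition \ref{prop degps-symp}(1). This gives
$$0\to\Hom\bigl(R_\FW(\wt V_{\rc,F}),\pi_\tau\bigr)\to\Hom\bigl(I_\FP^\FG(s_0),\pi_\tau\bigr)\to\Hom\bigl(R_\FW(V_F),\pi_\tau\bigr)\to\Ext^1_G\bigl(R_\FW(\wt V_{\rc,F}),\pi_\tau\bigr).$$
By the seesaw identity, the first term equals $\Hom_{\O(\wt V_{\rc,F})}\bigl(\Theta_{\wt V_{\rc},W,\psi}(\pi_\tau),\BC\bigr)$, where $\wt V_{\rc}:=\wt V_{\rc,F}\otimes_FE$ has dimension $4n+2-2m$. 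This space lies either in the tower $\CV$ or in $\CV_\rc$.
\begin{itemize}
\item If it lies in $\CV$, its dimension is $<2m=m_\CV(\pi_\tau)$, because $2m>2n+1$.
\item If it lies in $\CV_\rc$, its dimension is $<4n+4-2m=m_{\CV_\rc}(\pi_\tau)$, by Theorem \ref{thm conservation}.
\end{itemize}
Either way the theta lift vanishes, so the first term is zero. Hence $\Hom_G(I_\FP^\FG(s_0),\pi_\tau)\hookrightarrow\Hom_G(R_\FW(V_F),\pi_\tau)$, which proves (1).

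\textbf{Step 4: equality, the main obstacle.} Equality requires $\Ext^1_G(R_\FW(\wt V_{\rc,F}),\pi_\tau)=0$. For supercuspidal $\pi$ this is immediate from injectivity of $\pi_\tau$, giving (3). For square-integrable $\pi$ with $m=n+1$, $\wt V_{\rc}$ has dimension $2n$. Here I would use the $G$-filtration of $R_\FW(\wt V_{\rc,F})\subset I_\FP^\FG(-s_0)$. The open-orbit piece $J_0$ is projective relative to discrete series (Hom and Ext from $C_c^\infty(H\bs G)$ reduce to $H$-invariant forms on a discrete series, which is projective in its Bernstein block modulo center). The other pieces $J_k(-s_0)$ carry exponent $-s_0+k/2$; Casselman's strict inequality for square-integrable representations should exclude these from contributing $\Ext^1$. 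I expect the delicate point to be exactly these boundary exponents when $-s_0+k/2=0$, and I would handle them using the vanishing of the lower theta lift established in Step 3.
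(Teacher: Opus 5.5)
Your parts (1) and (3) are correct and follow the paper. Steps 1 and 3 give the injection $\Hom_{\Sp(W)}(I_\FP^\FG(s_0),\pi_\tau)\hookrightarrow\Hom_{\O(V_F)}(\Theta_{V,W,\psi}(\pi_\tau),\BC)$. Step 2 identifies the source with $\Hom_{\Sp(W_F)}(\pi^\vee,\BC)$ for tempered $\pi$. For supercuspidal $\pi$, injectivity of $\pi_\tau$ kills the $\Ext^1$ term.

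\textbf{Part (2) has a genuine gap.} Your Step 4 needs $\Ext^1_{\Sp(W)}(R_\FW(\wt V_{\rc,F}),\pi_\tau)=0$, and neither ingredient you propose delivers it.
\begin{enumerate}
\item The module $R_\FW(\wt V_{\rc,F})$ is only a submodule of $I_\FP^\FG(-s_0)$. So the graded pieces of the induced $\Sp(W)$-filtration on it are merely submodules of the $J_k(-s_0)$. Second adjointness and Casselman's criterion control $\Hom$ and $\Ext^1$ out of the full induced modules $J_k(-s_0)$, but this does not pass to submodules, not even for $\Hom$.
\item For the open-orbit piece, Frobenius reciprocity turns $\Ext^1$ out of $C_c^\infty(\Sp(W_F)^\tau\bs\Sp(W))$ into $\Ext^1_{\Sp(W_F)^\tau}(\pi_\tau^\vee,\BC)$. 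This has no reason to vanish for a non-supercuspidal discrete series. Such representations are not projective (or injective) in their Bernstein block: the Steinberg representation of $\SL_2(F)$ has non-zero $\Ext^1$ with the trivial representation in both directions.
\item The exponent $-s_0+\frac12=0$ at $k=1$ is not a delicate boundary case. It is excluded outright by the strict inequality in Casselman's criterion for square-integrable representations, which is exactly why (2) assumes square-integrability.
\end{enumerate}

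\textbf{The missing idea is that no $\Ext$-vanishing is needed.} Bound $\Hom_{\Sp(W)}(R_\FW(V_F),\pi_\tau)$ from above by $\Hom_{\Sp(W)}(I_\FP^\FG(-s_0),\pi_\tau)$, using only left exactness of $\Hom$.

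Apply Proposition \ref{prop degps-symp}(2) to the complementary space $\wt V_F$, whose associated point is $-s_0$. This gives $0\to R_\FW(\wt V_F)\oplus R_\FW(\wt V_{\rc,F})\to I_\FP^\FG(-s_0)\to R_\FW(V_F)\cap R_\FW(V_{\rc,F})\to0$, where the modules attached to the smaller spaces now sit as a \emph{submodule}. The spaces $\Hom_{\Sp(W)}(R_\FW(\wt V_F),\pi_\tau)$ and $\Hom_{\Sp(W)}(R_\FW(\wt V_{\rc,F}),\pi_\tau)$ vanish by the seesaw identity and the theta-vanishing you proved in Step 3. Hence $\Hom(R_\FW(V_F)\cap R_\FW(V_{\rc,F}),\pi_\tau)\cong\Hom(I_\FP^\FG(-s_0),\pi_\tau)$.

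Next, $R_\FW(V_F)/(R_\FW(V_F)\cap R_\FW(V_{\rc,F}))\cong R_\FW(\wt V_F)$, so restriction gives an injection $\Hom(R_\FW(V_F),\pi_\tau)\hookrightarrow\Hom(R_\FW(V_F)\cap R_\FW(V_{\rc,F}),\pi_\tau)$. Therefore $m_{V_F}(\Theta_{V,W,\psi_\tau}(\pi))\le\dim\Hom_{\Sp(W)}(I_\FP^\FG(-s_0),\pi_\tau)$.

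Now run your Step 2 argument at $-s_0$ on the full modules $J_k(-s_0)$, whose exponents are $(k-1)/2\ge0$. When $\pi$ is square-integrable and $m=n+1$, this shows the dimension above equals $m_{W_F}(\pi^\vee)$. Combined with (1), this gives the equality.
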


\begin{rem}\begin{enumerate}
\item Recall from Lemma \ref{lem theta relation}  that $\Theta_{V,W,\psi_\tau}(\pi)=\Theta_{V,W,\psi}(\pi_\tau)$. 
A direct verification shows $$\Hom_{\Sp(W_F)}(\pi,\BC)=\Hom_{\Sp(W_F)^\tau}\left(\pi_\tau,\BC\right),\quad (\pi^\vee)_\tau=(\pi_\tau)^\vee.$$ Consequently, we may unambiguously write  $\pi_\tau^\vee$ for either expression, and we obtain
$$\Hom_{\Sp(W_F)}\left(\pi^\vee,\BC\right)=\Hom_{\Sp(W_F)^\tau}\left(\pi^\vee_\tau,\BC\right).$$
\item Note that if $\pi$ is supercuspidal (resp. square-integrable, tempered), then the same holds for $\pi_\tau$.
\end{enumerate}
\end{rem}

\begin{proof} 
Under the assumption $m>n$, we have $$\dim V_F=\dim V_{\rc,F}>\dim \wt{V}_F=\dim \wt{V}_{\rc,F}.$$
Set
$$\wt{V}:=\wt{V}_{F}\otimes_FE,\quad {V}_\rc:={V}_{\rc,F}\otimes_FE,\quad \wt{V}_\rc:=\wt{V}_{\rc,F}\otimes_FE.$$
According to Proposition \ref{prop degps-symp} and Corollary \ref{cor seesaw}, we obtain the exact sequence
\begin{equation}\label{equ exact-1}\begin{aligned}
0\lra\Hom_{\O(\wt{V}_{\rc,F})}\left(\Theta_{\wt{V}_{\rc},W,\psi}(\pi_\tau),\BC\right)\lra\Hom_{\Sp(W)}\left(I_\FP^\FG(s_0),\pi_\tau\right)\lra\Hom_{\O(V_F)}\left(\Theta_{V,W,\psi}(\pi_\tau),\BC\right),
\end{aligned}\end{equation} where the last arrow is surjective if $\pi$ is supercuspidal.

Since $V$ and $\wt{V}$ lie in the same Witt tower and $\Theta_{V,W,\psi}(\pi_\tau)$ is the first occurrence, we have $$\Theta_{\wt{V},W,\psi}(\pi_\tau)=0.$$ If $V$ and $V_\rc$ are isomorphic, then
$\Theta_{V_\rc,W,\psi}(\pi_\tau)\neq0$, and consequently $$\Theta_{\wt{V}_\rc,W,\psi}(\pi_\tau)=0.$$ If $V$ and $V_\rc$ are not isomorphic, since $\dim V+\dim\wt{V}_\rc=4n+2$, by the theta dichotomy, we see 
$$\Theta_{\wt{V}_\rc,W,\psi}(\pi_\tau)=0.$$ Thus, in all cases, the exact sequence (\ref{equ exact-1}) yields an injection \begin{equation}\label{equ inj1}\Hom_{\Sp(W)}\left(I_\FP^\FG(s_0),\pi_\tau\right)\lhra\Hom_{\O(V_F)}(\Theta_{V,W,\psi}(\pi_\tau),\BC),\end{equation} 
which is an isomorphism when $\pi$ is supercuspidal.

Replacing $s_0$ by $-s_0$ in Proposition \ref{prop degps-symp} yields the exact sequence
$$0\lra R_\FW(\wt{V}_F)\oplus R_\FW(\wt{V}_{\rc,F})\lra I_\FP^\FG(-s_0)\lra R_\FW(V_F)\cap R_\FW(V_{\rc,F})\lra0.$$ 
Since $\Theta_{\wt{V},W,\psi}(\pi_\tau)$ and $\Theta_{\wt{V}_\rc,W,\psi}(\pi_\tau)$ are both vanish, Corollary \ref{cor seesaw} gives
\begin{equation}\label{equ isom1}\Hom_{\Sp(W)}\left(R_\FW(V_F)\cap R_\FW(V_{\rc,F}),\pi_\tau\right)\cong\Hom_{\Sp(W)}\left(I_\FP^\FG(-s_0),\pi_\tau\right).\end{equation}
Combining  the isomorphism
	$$R_\FW(V_F)/R_\FW(V_F)\cap R_\FW(V_{\rc,F})\cong R_\FW(\wt{V}_F) 
	$$ with Corollary \ref{cor seesaw}, we obtain an injection
	\begin{equation}\label{equ inj2}\Hom_{\O(V_F)}(\Theta_{V,W,\psi}(\pi_\tau),\BC)\lhra \Hom_{\Sp(W)}\left(R_\FW(V_F)\cap R_\FW(V_{\rc,F}),\pi_\tau\right),\end{equation} 
which is an isomorphism when $\pi$ is supercuspidal.

Combining the injection (\ref{equ inj1}), the isomorphism (\ref{equ isom1}) and  the injection (\ref{equ inj2}), we obtain a chain of injections
\begin{equation}\label{equ inj3}\Hom_{\Sp(W)}\left(I_\FP^\FG(s_0),\pi_\tau\right)\lhra\Hom_{\O(V_F)}(\Theta_{V,W,\psi}(\pi_\tau),\BC)\lhra\Hom_{\Sp(W)}\left(I_\FP^\FG(-s_0),\pi_\tau\right),\end{equation} 
 all of which are isomorphisms when $\pi$ is supercuspidal.
 
We now discuss the relation between $$\Hom_{\Sp(W)}\left(I_\FP^\FG(s),\pi_\tau\right)\quad \textrm{and}\quad \Hom_{\Sp(W_F)}\left(\pi^\vee,\BC\right).$$ Recall from Sect. \ref{subsec deg-principal-II}  that $I_\FP^\FG(s)$ admits an $\Sp(W)$-equivariant filtration with successive quotients $J_k(s)$ for $0\leq k\leq n$. 
 
 For $k=0$, we have $$\begin{aligned}\Hom_{\Sp(W_F)}\left(\pi^\vee,\BC\right)&=\Hom_{\Sp(W_F)^\tau}\left(\pi_\tau^\vee,\BC\right)\\&\cong\Hom_{\Sp(W)}\left(C_c^\infty\left(\Sp(W_F)^\tau\bs\Sp(W)\right),\pi_\tau\right)\\
 &\cong \Hom_{\Sp(W)}\left(J_0(s),\pi_\tau\right).
 \end{aligned}.$$  
 
For $0<k\leq n$, Proposition \ref{prop degps-symp-2} together with Bernstein's Frobenius reciprocity yields
 $$ \Hom_{\Sp(W)}\left(J_k(s),\pi_\tau\right)\cong\Hom_{\GL(X_k)\times\Sp(W_{2n-2k})}\left(|\det\nolimits_{X_k}|_E^{s+\frac{k}{2}}\otimes C_c^\infty\left(\Sp(W_{2n-2k,F})^\tau\bs \Sp(W_{2n-2k})\right),R_{\overline{P}_k}(\pi_\tau)\right)$$ and
  $$ \Ext^1_{\Sp(W)}\left(J_k(s),\pi_\tau\right)\cong\Ext^1_{\GL(X_k)\times\Sp(W_{2n-2k})}\left(|\det\nolimits_{X_k}|_E^{s+\frac{k}{2}}\otimes C_c^\infty\left(\Sp(W_{2n-2k,F})^\tau\bs \Sp(W_{2n-2k})\right),R_{\overline{P}_k}(\pi_\tau)\right).$$

If $\pi$ is square-integrable (resp. tempered),  Casselman's criterion implies that the center of $\GL(X_k)$ acts on any irreducible subquotient of $R_{\overline{P}_k}(\pi)$ by a character of the form $\mu|\cdot|^\alpha$ with $\mu$ unitary and $\alpha<0$ (resp. $\alpha\leq 0$). Consequently, 
\begin{itemize}
\item for tempered $\pi$ and every $0<k\leq n$, we have
$$\Hom_{\Sp(W)}\left(J_k(s_0),\pi_\tau\right)=0,\quad\Ext^1_{\Sp(W)}\left(J_k(s_0),\pi_\tau\right)=0;$$ 
\item if $\pi$ is square-integrable and $m=n+1$, then for all $0<k\leq n$, we have
$$\Hom_{\Sp(W)}\left(J_k(-s_0),\pi_\tau\right)=0,\quad \Ext^1_{\Sp(W)}\left(J_k(-s_0),\pi_\tau\right)=0.$$
\end{itemize}
Therefore, for tempered $\pi$ we have 
\begin{equation}\label{equ pd-isom1}\Hom_{\Sp(W_F)}\left(\pi^\vee,\BC\right)\cong\Hom_{\Sp(W)}\left(I_\FP^\FG(s_0),\pi_\tau\right),\end{equation} and for square-integrable $\pi$ with $m=n+1$,
\begin{equation}\label{equ pd-isom2}\Hom_{\Sp(W_F)}\left(\pi^\vee,\BC\right)\cong\Hom_{\Sp(W)}\left(I_\FP^\FG(-s_0),\pi_\tau\right).\end{equation} Finally, for supercuspidal $\pi$ one trivially has
\begin{equation}\label{equ pd-isom3}\Hom_{\Sp(W_F)}\left(\pi^\vee,\BC\right)\cong\Hom_{\Sp(W)}\left(I_\FP^\FG(s),\pi_\tau\right), \forall s\in\BC.\end{equation} 

Combining \eqref{equ inj3} with the isomorphisms \eqref{equ pd-isom1}--\eqref{equ pd-isom3} yields Theorem \ref{thm symp-orth}.

\end{proof}

\begin{rem}
We expect that if $\pi$ is $\Sp(W_F)$-distinguished, then $\Hom_{\Sp(W)}\left(I_\FP^\FG(s),\pi_\tau\right)\neq0$ for every $s\in\BC$. If this holds, then (\ref{equ inj1}) directly implies that 
$\Theta_{V,W,\psi_\tau}(\pi)$ is $\O(V_F)$-distinguished. 
 \end{rem}

\subsection{The case $m\leq n$}

\begin{thm}\label{thm orth-symp}
Let $\sigma\in\Irr(\O(V))$. Assume that $m\leq n$ and that $\Theta_{V,W,\psi_\tau}(\sigma)$ is the first occurrence. 
\begin{enumerate}
\item If $\sigma$ is tempered, then 
$$m_{W_F}\left(\Theta_{V,W,\psi_\tau}(\sigma)\right)\geq\sum_{V'_F\in \RH^1(E/F,\O(V))}m_{V'_F}\left(\sigma^\vee\right).$$
\item If $\sigma$ is square-integrable and $m=n$, then
$$m_{W_F}\left(\Theta_{V,W,\psi_\tau}(\sigma)\right)=\sum_{V'_F\in \RH^1(E/F,\O(V))}m_{V'_F}\left(\sigma^\vee\right).$$
\item If $\sigma$ is supercuspidal, then $$m_{W_F}\left(\Theta_{V,W,\psi_\tau}(\sigma)\right)=\sum_{V'_F\in \RH^1(E/F,\O(V))}m_{V'_F}\left(\sigma^\vee\right).$$
\end{enumerate}
\end{thm}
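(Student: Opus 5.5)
We mirror the proof of Theorem \ref{thm symp-orth}, with the roles of the two groups exchanged. On the orthogonal side the doubled group is $\FH=\O(\FV_\tau)$ and the degenerate principal series is $I_\FQ^\FH(\pm s_0)$. By Lemma \ref{lem theta relation}, $\Theta_{V,W,\psi_\tau}(\sigma)=\Theta_{V_\tau,W,\psi}(\sigma)$. Corollary \ref{cor seesaw} then gives
$$\Hom_{\Sp(W_F)}\left(\Theta_{V,W,\psi_\tau}(\sigma),\BC\right)\cong\Hom_{\O(V)}\left(R_{\FV_\tau}(W_F),\sigma\right).$$
The same identity holds with $W_F$ replaced by $\wt{W}_F$, and also for $\sigma\otimes\det$. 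This replaces the Galois period of the theta lift by $\Hom$ spaces out of submodules of the degenerate principal series, as before.

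\textbf{Step 1: sandwich via Proposition \ref{prop degps-orth}(1).} Since $m\le n$, the first exact sequence there, applied to $\Hom_{\O(V)}(-,\sigma)$, gives
$$0\to\Hom_{\O(V)}\left(R_{\FV_\tau}(\wt{W}_F)\otimes\det,\sigma\right)\to\Hom_{\O(V)}\left(I_\FQ^\FH(-s_0),\sigma\right)\to\Hom_{\O(V)}\left(R_{\FV_\tau}(W_F),\sigma\right).$$
The last map is surjective when $\sigma$ is supercuspidal, by projectivity of supercuspidals among representations of the relevant Bernstein block. The first term is the Galois period of $\Theta_{V_\tau,\wt{W},\psi}(\sigma\otimes\det)$. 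Here $\dim\wt{W}_F+\dim W_F=4m-2$ and $\Theta_{V,W,\psi_\tau}(\sigma)$ is the first occurrence. Hence $\Theta_{V,\wt{W},\psi_\tau}(\sigma)\ne0$ by persistence, because $\dim\wt{W}\ge\dim W$. Theta dichotomy (Theorem \ref{thm conservation}) then forces $\Theta_{V,\wt{W},\psi_\tau}(\sigma\otimes\det)=0$, so the first term vanishes. This gives an injection
$$\Hom_{\O(V)}\left(I_\FQ^\FH(-s_0),\sigma\right)\lhra m_{W_F}\text{-space},$$
which is an isomorphism for supercuspidal $\sigma$.

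\textbf{Step 2: the other inclusion.} We use the exact sequence for $I_\FQ^\FH(s_0)$ from the $s_0\mapsto -s_0$ version of the structure (the analogue of case (2)). Its socle is $R_{\FV_\tau}(\wt{W}_F)\oplus R_{\FV_\tau}(\wt{W}_F)\otimes\det$, with quotient $R_{\FV_\tau}(W_F)\cap R_{\FV_\tau}(W_F)\otimes\det$. If $\Theta_{V,\wt W}(\sigma)$ and $\Theta_{V,\wt W}(\sigma\otimes\det)$ both vanish, this identifies $\Hom(I_\FQ^\FH(s_0),\sigma)$ with $\Hom$ from the intersection. Combined with $R_{\FV_\tau}(W_F)/\cap\cong R_{\FV_\tau}(\wt W_F)\otimes\det$, this would give the reverse injection, as in \eqref{equ inj2}.

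This step is delicate, and I expect it to be the main obstacle: the roles of $W_F$ and $\wt{W}_F$ (which of them is the smaller space) must be tracked carefully. For $m\le n$ we have $\dim\wt{W}_F\le\dim W_F$ only when $m\le n$, consistent with first occurrence making $\Theta$ to $\wt W$ vanish. I would redo the dichotomy bookkeeping and, if needed, swap which sequence feeds which inclusion, exactly as in \eqref{equ inj3}. The target is a chain
$$\Hom(I_\FQ^\FH(\pm s_0),\sigma)\hookrightarrow m_{W_F}\text{-space}\hookrightarrow\Hom(I_\FQ^\FH(\mp s_0),\sigma),$$
with all maps isomorphisms in the supercuspidal case.

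\textbf{Step 3: compute $\Hom_{\O(V)}(I_\FQ^\FH(s),\sigma)$ via the filtration of Proposition \ref{prop degps-orth-2}.} The bottom piece is
$$J_0=\bigoplus_{V'_F}C_c^\infty\left(\O(V'_F)\bs\O(V)\right).$$
Frobenius reciprocity gives
$$\Hom\left(J_0,\sigma\right)\cong\bigoplus_{V'_F}\Hom_{\O(V'_F)}\left(\sigma^\vee,\BC\right),$$
whose dimension is the desired sum $\sum m_{V'_F}(\sigma^\vee)$. For $k>0$, Bernstein reciprocity reduces $\Hom$ and $\Ext^1$ from $J_k(s)$ to the opposite Jacquet module $R_{\overline{Q}_k}(\sigma)$, against the central character $|\det|_E^{s+k/2}$. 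Casselman's criterion makes these vanish: at $s$ with $s+k/2>0$ for tempered $\sigma$, and at the other sign when $\sigma$ is square-integrable and the exponent $s_0=-1/2$ (i.e. $m=n$) still leaves $-s_0\pm$ shifts nonzero. Identifying which of $\pm s_0$ is used in each part yields (1)–(3), with (3) trivial since supercuspidal Jacquet modules vanish.
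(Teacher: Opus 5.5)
Your architecture is the paper's. You build the sandwich
$\Hom_{\O(V)}(I_\FQ^\FH(-s_0),\sigma)\lhra\Hom_{\Sp(W_F)}(\Theta_{V,W,\psi_\tau}(\sigma),\BC)\lhra\Hom_{\O(V)}(I_\FQ^\FH(s_0),\sigma)$
from Proposition \ref{prop degps-orth}: case (1) at $-s_0$, and case (2) at $s_0$ with the roles of $W_F$ and $\wt W_F$ swapped. You then apply the filtration of Proposition \ref{prop degps-orth-2} and Casselman's criterion. Your description of the socle and cosocle of $I_\FQ^\FH(s_0)$ in Step 2 is also correct.

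However, Step 1 rests on a false statement, and that statement contradicts what Step 2 needs. For $m\le n$ one has $\dim\wt W_F=4m-2-2n\le 2n-2<\dim W_F$, not $\ge$. So persistence does not apply. In fact first occurrence gives $\Theta_{V,\wt W,\psi_\tau}(\sigma)=0$, which is exactly the vanishing Step 2 requires; this is why your Step 2 ends up hedged. Your dichotomy deduction is also invalid as stated. Theta dichotomy pairs $\wt W$ with $W$, since the dimensions must sum to $4m-2$. So $\Theta_{V,\wt W,\psi_\tau}(\sigma\otimes\det)=0$ follows from $\Theta_{V,W,\psi_\tau}(\sigma)\neq0$, not from the false claim $\Theta_{V,\wt W,\psi_\tau}(\sigma)\neq0$.

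More importantly, you are missing the observation that makes the twist irrelevant. The $\det$ in Proposition \ref{prop degps-orth} is the determinant of $\FH=\O(\FV_\tau)$. Its restriction to $\O(V)$ is $\N_{E/F}\circ\det_{\O(V)}$, which is identically $1$ because $\det_{\O(V)}$ takes values $\pm1$. So the first term in Step 1 is not the period of $\Theta(\sigma\otimes\det)$. Every $\wt W_F$-term that occurs contributes the same space:
$$\Hom_{\O(V)}(R_{\FV_\tau}(\wt W_F),\sigma)\cong\Hom_{\Sp(\wt W_F)}(\Theta_{V,\wt W,\psi_\tau}(\sigma),\BC).$$
This covers $R_{\FV_\tau}(\wt W_F)\otimes\det$ in Step 1 and both summands of the socle in Step 2. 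That space is $0$ by first occurrence. This is how the paper closes both injections, with no persistence or dichotomy at all.

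In Step 3 you also left the sign bookkeeping open; it goes as follows.
\begin{enumerate}
\item For tempered $\sigma$, use $-s_0=\frac{2n-2m+1}{2}>0$. Then $-s_0+\frac k2>0$ for $k\ge1$, which avoids the exponents $\le0$ of the opposite Jacquet modules and kills $\Hom$ and $\Ext^1$ from $J_k(-s_0)$. Hence $\Hom_{\O(V)}(I_\FQ^\FH(-s_0),\sigma)\cong\Hom_{\O(V)}(J_0(-s_0),\sigma)$, and the first injection gives (1).
\item For square-integrable $\sigma$ with $m=n$, use $s_0=-\frac12$. Then $s_0+\frac k2=\frac{k-1}{2}\ge0$, and this equals $0$ at $k=1$. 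So the shift is not ``nonzero''; what is used is the strict negativity of discrete-series exponents. This gives $\Hom_{\O(V)}(I_\FQ^\FH(s_0),\sigma)\cong\Hom_{\O(V)}(J_0(s_0),\sigma)$, and the second injection supplies the reverse inequality in (2).
\end{enumerate}
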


\begin{proof}
The proof is similar to that of Theorem \ref{thm symp-orth}. We only provide a sketch. 

According to Proposition \ref{prop degps-orth}, we have the exact sequence
$$0\lra\Hom_{\O(V)}\left(R_{\FV_\tau}(\wt{W}_F)\otimes\det,\sigma \right)\lra\Hom_{\O(V)}\left(I_\FQ^\FH(-s_0),\sigma\right)\lra\Hom_{\O(V)}\left(R_{\FV_\tau}(W_F),\sigma\right).$$
Observe that the restriction of the determinant character $\det$ of $\O(\FV_\tau)$ to the subgroup $\O(V)$ coincides with $\N_{E/F}\circ\det_{\O(V)}$, which is identically 1 on $\O(V)$. Hence
$$\Hom_{\O(V)}\left(R_{\FV_\tau}(\wt{W}_F)\otimes\det,\sigma \right)=\Hom_{\O(V)}\left(R_{\FV_\tau}(\wt{W}_F),\sigma \right).$$
By Corollary \ref{cor seesaw}, the latter space is isomorphic to
$$\Hom_{\Sp(\wt{W}_F)}\left(\Theta_{V,\wt{W},\psi_\tau}(\sigma),\BC\right).$$ Since $\Theta_{V,W,\psi_\tau}(\sigma)$ is assumed to be the first occurrence, this $\Hom$ space is zero. Therefore, from the exact sequence above we obtain an injection $$\Hom_{\O(V)}\left(I_\FQ^\FH(-s_0),\sigma\right)\lhra\Hom_{\Sp(W_F)}\left(\Theta_{V,W,\psi_\tau}(\sigma),\BC\right).$$ 
Replacing $-s_0$ to $s_0$ in Proposition \ref{prop degps-orth} yields another injection
$$\Hom_{\Sp(W_F)}\left(\Theta_{V,W,\psi_\tau}(\sigma),\BC\right)\lhra\Hom_{\O(V)}\left(I_\FQ^\FH(s_0),\sigma\right).$$ 
In summary, we obtain a sequence of injections
\begin{equation}\label{equ inj4}\Hom_{\O(V)}\left(I_\FQ^\FH(-s_0),\sigma\right)\lhra\Hom_{\Sp(W_F)}\left(\Theta_{V,W,\psi_\tau}(\sigma),\BC\right)\lhra\Hom_{\O(V)}\left(I_\FQ^\FH(s_0),\sigma\right).\end{equation}
When $\sigma$ is supercuspidal, both injections in (\ref{equ inj4}) are isomorphisms.

The degenerate principal series $I_\FQ^\FH(s)$ admits 
an $\O(V)$-equivariant filtration  with successive quotients $J_k(s)$ for $0\leq k\leq r$. Applying Proposition \ref{prop degps-orth-2} together with Casselman's criterion yields:
\begin{itemize} 
\item if $\sigma$ is tempered, then 
\begin{equation}\label{equ pd-isom4}\Hom_{\O(V)}\left(J_0(-s_0),\sigma\right)\cong\Hom_{\O(V)}\left(I_\FQ^\FH(-s_0),\sigma\right);\end{equation}  
\item if $\sigma$ is square-integrable and $m=n$, then
\begin{equation}\label{equ pd-isom5}\Hom_{\O(V)}\left(J_0(s_0),\sigma\right)\cong\Hom_{\O(V)}\left(I_\FQ^\FH(s_0),\sigma\right).\end{equation} 
\end{itemize}
For supercuspidal $\sigma$, one trivially has
\begin{equation}\label{equ pd-isom6}\Hom_{\O(V)}\left(J_0(s),\sigma\right)\cong\Hom_{\O(V)}\left(I_\FQ^\FH(s),\sigma\right), \forall s\in\BC.\end{equation} 
On the other hand,  Proposition \ref{prop degps-orth-2} gives $$\Hom_{\O(V)}\left(J_0(s),\sigma\right)\cong\bigoplus_{V'_F\in\RH^1(E/F,\O(V))}\Hom_{\O(V'_F)}(\sigma^\vee,\BC).$$  

Theorem \ref{thm symp-orth} then follows from (\ref{equ inj4}), (\ref{equ pd-isom4}), (\ref{equ pd-isom5}) and (\ref{equ pd-isom6}).

\end{proof}

\section{Transfer of local periods}\label{sec transfer}

\subsection{Harmonic analysis on symmetric spaces} We begin by recalling some basic facts about harmonic analysis on $p$-adic symmetric spaces.

Let $G$ be a reductive group over $F$ and $\iota$ an $F$-involution of $G$. We denote by $G:=G(F)$ the group of $F$-rational points, and by $H:=G^\iota$ the subgroup of points fixed by $\iota$. The homogeneous space $X:=H\bs G$ is a symmetric space. Let $Z$ be the center of $G$.

 For $\alpha\in\Hom_H(\pi,\BC)$ and $v\in\pi$,  the {\em generalized matrix coefficient}  $\varphi_{\alpha,v}: G\ra\BC$ is defined by $$\varphi_{\alpha,v}(g)=\alpha(\pi(g)v).$$ This function is $H$-invariant and therefore descends to a function on the symmetric space $X=H\bs G$.

Let $\pi$ be an $H$-distinguished representation of $G$, and $\alpha\in\Hom_H(\pi,\BC)$ a non-zero local period.  We now recall several analytic properties of $\alpha$ and $\pi$.
\begin{enumerate}
\item The period $\alpha$ is called \emph{relatively supercuspidal} if for every $v\in\pi$ the generalized matrix coefficient $\varphi_{\alpha,v}$ is compactly supported modulo $ZH$. The representation $\pi$ is said to be $H$-\emph{relatively supercuspidal} if every $\alpha\in\Hom_H(\pi,\BC)$ is relatively supercuspidal.
\item Assume that $\pi$ admits a unitary central character. 
\begin{itemize}
\item The period $\alpha$ is \emph{relatively square-integrable} if $\varphi_{\alpha,v}\in L^2(ZH\bs G)$  for all $v\in\pi$. 
\item The period $\alpha$ is  \emph{relatively tempered} if 
$\varphi_{\alpha,v}\in L^{2+\epsilon}(ZH\bs G)$  for  every $\epsilon>0$ and all $v\in\pi$. 
\end{itemize}
The representation $\pi$ is called $H$-\emph{relatively square-integrable} (resp. $H$-\emph{relatively tempered}) if every  
$\alpha\in\Hom_H(\pi,\BC)$ is relatively square-integrable (resp. relatively tempered).
\end{enumerate}

A fundamental fact (see \cite{kt1}, \cite{kt2} and \cite{dh}) is that  any $H$-distinguished representation $\pi$ which is supercuspidal (resp. square-integrable, tempered)  is automatically $H$-relatively supercuspidal (resp. $H$-relatively square-integrable, $H$-relatively tempered).

We recall the following terminology. A split torus $S$ of $G$ is called \emph{$\iota$-split} if $\iota(s)=s^{-1}$ for any $s\in S$. A parabolic subgroup $P$ of $G$ is said to be \emph{$\iota$-parabolic} if $P$ and $\iota(P)$ are opposite. Given an $\iota$-parabolic subgroup $P$, denote by $S_P$ the maximal $\iota$-split torus in the center of $P\cap\iota(P)$, and define
$$S_P^+:=\{s\in S_P\mid |\alpha(s)|_F\leq 1,\ \forall\alpha\in\Delta(S_P,P) \},$$ where $\Delta(S_P,P)$ is the set of simple roots of $S_P$ in the Lie algebra of $P$.
The {\em weak Cartan decomposition} for the symmetric space $X=H\bs G$ (see \cite[Theorem 1.1]{bo}) asserts the existence of a compact subset $\Omega\subset G$ and a finite set $\CP$ of minimal $\iota$-parabolic subgroups of $G$ such that
\begin{equation}\label{equ cartan}G=\bigcup_{P\in\CP}HS_P^+\Omega.\end{equation}
	
Let $\Xi$ be the Harish-Chandra function on $G$, whose basic properties are recalled in \cite[Sect. II.1]{wa1}. Define a function $\Xi^X$ on the symmetric space $X$ by $$\Xi^X(Hg):=\Xi(s(g))^{1/2},\quad g\in G,$$ where $s(g)=\iota(g)^{-1}g$. Fix a height function $\norm{\textrm{-}}$ on $G$. For $d\in\BZ$, set $$\N_d(Hg):=(1+\log\norm{s(g)})^d.$$ For any minimal $\iota$-parabolic subgroup $P\in\CP$, there exist non-negative integers $d,d'\in\BN$ such that (cf. \cite[Proposition 6]{lag})
\begin{equation}\label{equ estimate1}\delta_{P}^{1/2}(a)\N_{-d}(Ha)\ll\Xi^X(Ha)\ll\delta_{P}^{1/2}(a)\N_{d'}(Ha),\quad\forall a\in S_P,\end{equation} 
and (cf. \cite[Proposition 2.6]{kt2})
\begin{equation}\label{equ estimate2}\vol(H\bs Ha\Omega)\asymp\delta_{P}^{-1}(a),\quad\forall a\in S_P^+.\end{equation}
Here $A\ll B$ means that there exists a constant $c>0$ such that $A\leq cB$, and $A\asymp B$ means that both $A\ll B$ and $B\ll A$ hold.

We introduce the following function spaces. Let $$C_{\sm}(X):=\bigcup_J C(X/J)$$ be the space of functions on $X$ that are right $J$-invariant for some compact open subgroup $J\subset G$. 
We denote by $\CC(X)$ (resp. $\CC_\temp(X)$) the space of Schwartz (resp. tempered) functions on $X$. These are the subspaces of $C_\sm(X)$ characterised by the growth condition  
\begin{equation}\label{equ estimate3} |f(x)|\ll\Xi^X(x)\N_{-d}(x),\quad\forall x\in X,\end{equation}
which is required to hold for every $d\in\BN$ in the Schwartz case, and for some $d\in\BZ$ in the tempered case. 
A fundamental fact is that if $\alpha\in\Hom_H(\pi,\BC)$ is relatively square-integrable (resp. relatively tempered), then the corresponding generalized matrix coefficient satisfies 
$\varphi_{\alpha,v}\in\CC(X)$ (resp. $\varphi_{\alpha,v}\in\CC_\temp(X)$).

\subsection{Base change doubling zeta integrals} In this subsection we
introduce the base change doubling zeta integral and discuss some of its elementary properties.

We work in two parallel situations. Let $\FG$ denote either $\Sp(\FW)$  or $\O(\FV_\tau)$, and accordingly
$G=\Sp(W)$ or $\O(V)$. Let $H=\Sp(W_F)$  (resp. $\O(V_F)$) and $X=H\bs G$ the corresponding Galois symmetric space. 

Let $\FP$ be the Siegel parabolic subgroup of $\FG$ stabilizing the Lagrangian subspace $\FX'$ (in the symplectic case) or $\tau V_F$ (in the orthogonal case); see Sect. \ref{sec deg} for the precise definition. 

Let $\pi\in\Irr(G)$.
Given a holomorphic section $\CF$ of  the degenerate principal series $I_\FP^\FG(s)$, a period $\alpha\in\Hom_{H}(\pi^\vee,\BC)$, and a vector $v\in\pi^\vee$, we define the \emph{base change doubling zeta integral} by
\begin{equation}\label{equ zeta}Z(s,\alpha,\CF,v):=\int_{X}\CF(g_\tau)\alpha(\pi^\vee(g)v)\ \d g,\end{equation} 
where $$g_\tau:=\begin{cases}\varphi_\tau(g)=c_\tau gc_\tau^{-1},\quad&\textrm{if } G=\Sp(W),
\\ g,\quad&\textrm{if }G=\O(V).\end{cases}$$ In the symplectic case, this integral admits an equivalent expression
$$Z(s,\alpha,\CF,v)=\int_{X^\tau}\CF(g)\alpha(\pi_\tau^\vee(g)v)\ \d g,$$ with $X^\tau:=\Sp(W_F)^\tau\bs\Sp(W)$.

This integral serves as a direct analogue of the classical doubling zeta integral introduced by Piatetski-Shapiro and Rallis \cite{gpsr}.

It is immediate that if $\alpha$ is relatively supercuspidal, then the integral $Z(s,\alpha,\CF,v)$ converges absolutely for every $s\in\BC$. For a fixed $s\in\BC$ for which these integrals are absolutely convergent, they define a $G$-invariant linear form
$$Z(s,\alpha,-,-):I_\FP^\FG(s)^\tau\otimes\pi^\vee\lra\BC,\quad \CF\otimes v\mapsto Z(s,\alpha,\CF,v),$$ where we set $I_\FP^\FG(s)^\tau:=I_\FP^\FG(s)\circ\varphi_\tau$.

The remainder of this subsection is devoted to proving the following proposition on the convergence of the base change doubling zeta integrals. In order to treat the symplectic and orthogonal cases simultaneously, we henceforth assume
$$\dim W=\dim V=2n.$$

\begin{prop}\label{prop zeta-conv}
\begin{enumerate}
	\item If $\alpha$ is relatively square-integrable, then $Z(s,\alpha,\CF,v)$ is absolutely convergent for $\Re(s)\geq-\frac{1}{2}$.
	\item If $\alpha$ is relatively tempered, then $Z(s,\alpha,\CF,v)$ is absolutely convergent for $\Re(s)>-\frac{1}{2}$.
\end{enumerate}
\end{prop}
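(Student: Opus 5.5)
Using the weak Cartan decomposition \eqref{equ cartan}, the integral over $X=H\bs G$ reduces to a sum of integrals over $H\bs HS_P^+\Omega$, with $P\in\CP$ a minimal $\iota$-parabolic. Both $\CF(g_\tau)$ and $\varphi_{\alpha,v}(g)=\alpha(\pi^\vee(g)v)$ are right invariant under a small compact open subgroup $J$, and $\Omega$ is compact. So, after enlarging $\Omega$ to a union of finitely many $J$-cosets, it suffices to bound
$$\sum_{a\in S_P^+/S_P(\CO)}\vol(H\bs Ha\Omega)\,\sup_{\omega\in\Omega}|\CF((a\omega)_\tau)|\,\sup_{\omega\in\Omega}|\varphi_{\alpha,v}(a\omega)|.$$
By \eqref{equ estimate2} the volume factor is $\asymp\delta_P^{-1}(a)$. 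By \eqref{equ estimate3}, $|\varphi_{\alpha,v}(a\omega)|\ll\Xi^X(Ha)\N_{-d}(Ha)$, where $d$ is arbitrary in the square-integrable case and fixed in the tempered case. Then \eqref{equ estimate1} gives $|\varphi_{\alpha,v}(a\omega)|\ll\delta_P^{1/2}(a)\N_{d'-d}(Ha)$.

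\textbf{Key step.} The main work, and the main obstacle, is a uniform estimate for the section on the $\iota$-split torus:
$$|\CF((a\omega)_\tau)|\ll \delta_P^{1/2}(a)\cdot \delta_P(a)^{\frac{\Re(s)+1/2}{\kappa}}\quad\text{(roughly)},\qquad a\in S_P^+,$$
that is, decay of $\CF$ along $S_P^+$ strictly beyond $\delta_P^{1/2}$ as soon as $\Re(s)>-\frac12$.

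To get it, I would first use $\FG=\FP K_\FG$ and $\Omega$ compact to reduce to $\CF$ being the standard $K_\FG$-invariant-type section, so that $|\CF(g)|$ is a power of the "height'' of $g$ relative to $\FP$. Then I would compute the Iwasawa $\FP$-component of $a_\tau$ explicitly. Since $S_P$ is $\iota$-split, its elements act on $V$ (resp. $W$) diagonally in a basis adapted to the chosen Lagrangian. The point is that $\iota$-split elements such as $\mathrm{diag}(t,t^{-1})$ with $\iota(t)=t^{-1}$ move $\tau V_F$ (resp. $\FX'$) into "general position''. Writing such $t$ via $F$-coordinates, the projection to $\GL(\FL)$ has $|\det|_F$ equal to a product of $\max(|t_i|_E,|t_i|_E^{-1})^{-1}$-type factors. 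This yields $|\CF(a_\tau)|\asymp\prod_i \max(1,|t_i|)^{-(\Re s+\rho_\FP)}$, which is the doubling-method analogue of the classical estimate in \cite{gpsr}, and I would compare $\rho_\FP$ with $\delta_P$ to put it in the form above. Carrying this out concretely will need a case-by-case rank-one reduction through the root system of $S_P$ (from the minimal $\iota$-parabolic), together with the normalization $\dim W=\dim V=2n$, which makes the exponents in the two cases match.

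\textbf{Combining.} Inserting the three bounds, the summand is $\ll\prod_i |t_i|^{-(\Re s+\frac12)c_i}\,\N_{d'-d}(Ha)$ with positive constants $c_i$, i.e. the $\delta_P^{-1}$ from the volume cancels against $\delta_P^{1/2}$ from the coefficient and $\delta_P^{1/2}$ from the section. For $\Re(s)>-\frac12$ the sum over the lattice cone $S_P^+/S_P(\CO)$ converges geometrically, which gives (2) even with a polynomial loss $\N_{d'-d}$. For $\Re(s)=-\frac12$ only the logarithmic factor remains. In the square-integrable case $d$ is arbitrary, so choosing $d>d'+\dim S_P+1$ makes $\sum\N_{d'-d}(Ha)$ converge, since $\log\norm{s(a)}$ grows linearly in the lattice coordinates. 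This gives (1) on the closed half-plane $\Re(s)\ge-\frac12$.
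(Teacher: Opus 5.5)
Your proposal is essentially the paper's proof. The shared steps are: the weak Cartan decomposition together with the volume estimate $\vol(H\bs Ha\Omega)\asymp\delta_P^{-1}(a)$, and the bound $|\varphi_{\alpha,v}(a)|\ll\delta_P^{1/2}(a)\N_{-d}(Ha)$. Your ``key step'' is exactly Lemma \ref{lem asym-sec}, which the paper proves by reducing to the standard section $\CF_0$ of \cite{gpsr} and bounding the largest $2n\times 2n$ minor of an explicit matrix $h(a)$ rather than by a rank-one reduction; the integrand then becomes $\prod_i|a_i|_E^{\Re(s)+i-1/2}\N_{-d}(Ha)$ on $A_n^+$.

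One correction to your heuristic: elements of a maximal $\iota$-split torus are $\gamma_n a\gamma_n^{-1}$ with $a\in A_n$. That is, they have eigenvalues $a_i^{\pm1}\in F^\times$ on eigenlines interchanged by $\iota$. They are not $\diag(t,t^{-1})$ with $\iota(t)=t^{-1}$: such $t$ satisfy $|t|_E=1$ and would give no decay at all.
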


\subsubsection{Maximal $\iota$-split tori}\label{subsec max-split} To study the integral (\ref{equ zeta}), we need to classify the maximal $\iota$-split tori of $G$, where $\iota$ denotes the Galois involution. Define the $n\times n$ anti-diagonal matrix
$$w_n:=\begin{pmatrix}&&1\\ &{ \begin{sideways}$\ddots$\end{sideways} }\\ 1&& \end{pmatrix}\in\GL_n.$$ Set $$J_n:=\begin{pmatrix}
0&w_n\\-w_n&0
\end{pmatrix},\quad \gamma_n:=\begin{pmatrix}
1&w_n\\-\tau w_n&\tau 
\end{pmatrix}.$$ 
 For any $k\leq n$, let $$A_k:=\left\{\left(\begin{smallmatrix}a_1&&&&& \\ &\ddots&&&& \\ &&a_k&&& \\ &&&a_k^{-1}&& \\ &&&&\ddots& \\ &&&&&a_1^{-1} \end{smallmatrix}\right) \mid a_1,...,a_k\in F^\times \right\}$$ and $$S_k:=\gamma_kA_k\gamma_k^{-1}.$$
We identify $A_k$ with $(F^\times)^k$ in the natural way. View $A_k$ as a subtorus of $A_n$ via the embedding $$(a_1,...,a_k)\mapsto (a_1,...,a_k,1,...,1),$$ and similarly regard $S_k$ as a subtorus of $S_n$ by the same identification.

For the Galois symmetric pair $(\GL_{2n}(E),\GL_{2n}(F))$, it is known that $S_n$ is a maximal $\iota$-split torus of $\GL_{2n}(E)$. Moreover, every $\iota$-split torus of $\GL_{2n}(E)$ is $\GL_{2n}(F)$-conjugate to $S_k$ for some $k\leq n$ (see \cite[Sect. 5.1]{smi}).

Let $G=\Sp(W)$. Suppose that the symplectic form on $W_F$ is represented by the matrix $J_n$. A direct verification shows that $S_n$ is contained in $G$; thus it is a maximal $\iota$-split torus of $G$. Moreover, one finds that $\RH^1(F,Z_G(S_n)\cap H)$ is trivial. Hence every maximal $\iota$-split torus of $G$ is $H$-conjugate to $S_n$.  For $a\in A_n$, we write
$$g(a):=\gamma_na\gamma_n^{-1}\in S_n.$$ 

Let $G=\O(V)$. Since any maximal $\iota$-split torus of $G$ is in particular an $\iota$-split torus of $\GL_{2n}(E)$, it must be $\GL_{2n}(F)$-conjugate to $S_r$, where $r$ denotes its split rank. If $S=\delta S_r\delta^{-1}$ with $\delta\in\GL_{2n}(F)$ is a maximal $\iota$-split torus of $G$, then for $a\in A_r$, we define
$$g(a):=\delta\gamma_ra\gamma_r^{-1}\delta^{-1}\in S.$$

\subsubsection{Asymptotic behavior of holomorphic sections} Set
$$A_k^+:=\left\{(a_1,...,a_k)\in(F^\times)^k\mid |a_1|_F\leq\cdots\leq|a_k|_F\leq 1\right\}.$$

\begin{lem}\label{lem asym-sec}
\begin{enumerate}
\item Let $G=\Sp(W)$. For $a\in A_n$, denote by $g(a)\in S_n$ the element constructed in Sect. \ref{subsec max-split}. Then for every $\CF\in I_\FP^\FG(s)$ and $a\in A_n^+$, 
$$|\CF(g(a)_\tau)|\ll\prod_{i=1}^n|a_i|_E^{\Re(s)+\rho},\quad \rho:=\frac{2n+1}{2}.$$ 
\item  Let $G=\O(V)$ and let $S$ be a maximal  $\iota$-split torus of $G$. For $a\in A_r$, write $g(a)\in S$ for the corresponding element. Then for any $\CF\in I_\FP^\FG(s)$ and $a\in A_r^+$, 
$$|\CF(g(a))|\ll\prod_{i=1}^r|a_i|_E^{\Re(s)+\rho},\quad \rho:=\frac{2n-1}{2}.$$ 
\end{enumerate}
\end{lem}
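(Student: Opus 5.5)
The plan is to reduce the bound to an explicit Iwasawa decomposition of $g(a)_\tau$ in $\FG$. We write $g(a)_\tau = p(a)\,k(a)$ with $p(a)\in\FP$ and $k(a)$ in a fixed compact subgroup $K_\FG$ of $\FG$. Since $\CF$ is a holomorphic section, hence smooth and bounded on $K_\FG$, we get
$$|\CF(g(a)_\tau)| \ll |\det(p(a)|_{\FL})|_F^{\Re(s)+\rho_\FP},$$
where $\FL$ is the relevant Lagrangian ($\FX'$ or $\tau V_F$) and $\rho_\FP$ is the exponent of $\delta_\FP^{1/2}$ in terms of $|\det|_F$. Here $\rho_\FP=\frac{4n+1}{2}$ for $\Sp(\FW)$ with $\dim\FW=4n$, and $\rho_\FP=\frac{4n-1}{2}$ for $\O(\FV_\tau)$ with $\dim\FV_\tau=4n$. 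Everything then comes down to computing $|\det(p(a)|_\FL)|_F$, and we expect it to be $\prod_i |a_i|_E^{1/2}$ times a bounded factor. Since $|a_i|_E=|a_i|_F^2$, this turns $|\det|_F^{\Re(s)+\rho_\FP}$ into $\prod |a_i|_E^{(\Re(s)+\rho_\FP)/2}$. This does not match the stated $\rho$ literally, so the normalization has to be pinned down: most likely $|\det(p(a))|_F \asymp \prod|a_i|_F^{2}=\prod|a_i|_E$ on the $2$-dimensional blocks. The computation below should settle which normalization is correct.

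The key step is the explicit computation, and it reduces to rank one. The torus $S_n=\gamma_nA_n\gamma_n^{-1}$ is a product of commuting copies, one for each $i$, each acting on the $E$-span of $e_i,e'_i$ (the pair $e_i, e_{2n+1-i}$ in the matrix realization with $J_n$). The Lagrangian $\FX'=X_F\oplus\tau Y_F$ decomposes compatibly as a direct sum of $F$-planes $Fe_i\oplus F\tau e'_i$. In the orthogonal case, after conjugating by $\delta\in\GL_{2n}(F)$, the same holds: there is a hyperbolic decomposition of $V_F$ into $r$ hyperbolic planes plus an anisotropic part, and $g(a)$ acts trivially on the anisotropic part. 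So it suffices to treat the $\SL_2(E)\subset\Sp_4(F)$ (resp. the split $\O_2(E)\subset\O_4(F)$) situation. There $g(a)_\tau=c_\tau\gamma_1\mathrm{diag}(a,a^{-1})\gamma_1^{-1}c_\tau^{-1}$ is an explicit $2\times2$ matrix over $E$, which we view as a $4\times4$ matrix over $F$ acting on $E^2=F^4$. We compute the image of the $F$-plane $\FL_1=Fe\oplus F\tau e'$ under it and read off the Plücker coordinate of this image relative to a fixed $\CO_F$-basis of $\FL_1$. For Siegel parabolics, $|\det(p)|_F$ equals $\|\text{Plücker vector of } g^{-1}\FL\|^{-1}$ up to constants. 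Concretely, $g^{-1}\FL_1$ is spanned by two vectors whose coordinates are linear combinations of $a, a^{-1}$ with fixed coefficients. For $|a|\le1$ the maximal $2\times2$ minor has size $|a|_F^{-2}$, which gives $|\det p|_F \asymp |a|_F^{2}=|a|_E$. Multiplying over the blocks, and using that the contribution is $1$ where $a_i$ is trivial, yields
$$|\det(p(a))|_F\asymp\prod_i|a_i|_E.$$

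Finally, we must reconcile the exponent. With $\delta_\FP^{1/2}=|\det|_F^{(4n+1)/2}$ (resp. $(4n-1)/2$), the naive bound would be $\prod|a_i|_E^{\Re(s)+(4n\pm1)/2}$. This is stronger than the stated bound, since $|a_i|_E\le1$, so the stated inequality with the smaller exponent $\rho=\frac{2n\pm1}{2}$ follows a fortiori provided $\Re(s)+\rho\ge$ the relevant sign conditions. If instead the honest exponent involves the factor $\frac12$ (as in the Geometric Lemma computation of Proposition \ref{prop degps-orth-2}, where $|\det_{X_k}|_E^{s+\frac k2}$ appears from $|\N_{E/F}\det|^{s}$), one gets exactly $\prod |a_i|_E^{\Re(s)+\rho}$. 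Either way the statement follows once the rank-one minor computation is done correctly, and we expect that computation to be the main obstacle. The difficulty lies in tracking $\tau$, $c_\tau$ and $\gamma_1$ precisely enough to see that the leading minor is $a^{-1}\cdot a^{-1}$ rather than something cancelling, and in the orthogonal case in checking that $\delta$ can be chosen in a fixed compact set modulo $\O(V_F)$, so that the implied constant is uniform.
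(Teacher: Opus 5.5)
\textbf{Overall.} Your strategy is in substance the paper's. The paper dominates an arbitrary section by the spherical section $\CF_0$ and shows that, up to the shift $s\mapsto s-\rho$, $\CF_0(g)=\kappa'(h)^{-s}$, where $\kappa'(h)$ is the largest $2n\times 2n$ minor of the matrix $h$ whose rows span the translated Lagrangian. This is exactly your relation $|\det(p|_{\FL})|_F\asymp\|g^{-1}(\wedge^{2n}\FL)\|^{-1}$ for $g=pk$. Your rank-one output $|\det(p(a)|_{\FL})|_F\asymp\prod_i|a_i|_E$ is also correct. However, two steps as written are wrong.

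\textbf{The exponent.} Your treatment of the exponent is incorrect. The Siegel Levi of $\Sp(\FW)\cong\Sp_{4n}(F)$ is $\GL_{2n}(F)$, so $\delta_{\FP}=|\det|_F^{2n+1}$. The Siegel parabolic of the split group $\O(\FV_\tau)\cong\O_{4n}(F)$ has modulus character $|\det|_F^{2n-1}$. Hence $\rho_{\FP}=\frac{2n\pm1}{2}$ is precisely the $\rho$ of the statement, and
$|\CF(g(a)_\tau)|\ll|\det(p(a)|_{\FL})|_F^{\Re(s)+\rho}\asymp\prod_i|a_i|_E^{\Re(s)+\rho}$,
with nothing left to reconcile. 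As written, you deduce the lemma ``a fortiori'' from a bound with exponent $\Re(s)+\frac{4n\pm1}{2}$. That bound is false: for $\CF_0$ the estimate is two-sided, so the exponent cannot be increased. Your fallback ``factor $\frac12$'' scenario would give exponent $\frac12(\Re(s)+\rho_{\FP})$, not the stated one. Replace this hedging by the correct modulus character.

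\textbf{The orthogonal blocks.} Your description of the orthogonal blocks is also incorrect. An $F$-hyperbolic plane $Fe\oplus Ff\subset V_F$ carries no nontrivial $F$-split $\iota$-split torus: the torus of its special orthogonal group is $E^\times$, whose $\iota$-split part is the compact group $E^1$.

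What actually happens is this. The eigenvectors $u_i,u_i'$ of $g(a)$ with eigenvalues $a_i^{\pm1}$ are isotropic, and $\iota$ interchanges them up to scalars, because $\iota(g(a))=g(a)^{-1}$ and $a_i\in F$. So $P_i:=Eu_i\oplus Eu_i'$ is $\iota$-stable and $P_i=P_{i,F}\otimes_FE$. This gives an orthogonal decomposition $V_F=\bigoplus_{i=1}^rP_{i,F}\oplus V_{0,F}$ in which:
\begin{itemize}
\item each $P_{i,F}$ is an $F$-anisotropic plane, isometric to $c\cdot\N_{E/F}$ for some $c\in F^\times$, which becomes hyperbolic over $E$;
\item $g(a)$ acts trivially on $V_{0,F}$.
\end{itemize}
This decomposition is $F$-rational and $\tau V_F$ splits compatibly, so your rank-one reduction still works with these planes. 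The rank-one matrix is the paper's $h(a)$ with $\beta=\tau^2$, whose $2\times 2$ minors are $\asymp|a|_F^{-2}$.

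\textbf{Two non-issues.} The uniformity in $\delta$ you worry about does not arise. $S$, hence $\delta$, is fixed, and $\delta\in\GL(V_F)$ preserves $\tau V_F$, so it changes the Pl\"ucker norm only by a fixed factor.

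Nor is there any cancellation in the symplectic rank-one computation. The element $g(a)_\tau=\frac12\left(\begin{smallmatrix}a+a^{-1}&a^{-1}-a\\ a^{-1}-a&a+a^{-1}\end{smallmatrix}\right)$ lies in $\SL_2(F)$ and acts diagonally on $W_F\oplus\tau W_F$. The nonzero minors are, up to sign, products of two of the numbers $\frac12(a\pm a^{-1})$. Since $(a+a^{-1})-(a-a^{-1})=2a^{-1}$, the largest minor is $\gg|a|_F^{-2}$.
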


\begin{proof}
Following the proof of \cite[Lemma 9.4]{gi14}, it suffices to consider the specific section $\CF_0$ of $I_\FP^\FG(s)$ constructed in \cite[Sect. 6.4]{gpsr}.  We will then adapt the arguments of {\em loc. cit.} to compute  $\CF_0(g(a)_\tau)$ explicitly.

Suppose $G=\Sp(W)$. We express elements of $\FG=\Sp(\FW)$ in block form with respect to the decomposition $$\FW=W_F\oplus\tau W_F=X_F\oplus Y_F\oplus\tau X_F\oplus\tau Y_F.$$ For brevity, we also write $a=\diag(a_1,...,a_n)$ and $w=w_n$. Then
$$\begin{aligned}g(a)_\tau=c_\tau\gamma_na\gamma_n^{-1}c^{-1}_\tau&=\begin{pmatrix}1&w\\ -w&1\end{pmatrix}\begin{pmatrix}a&\\ &wa^{-1}w\end{pmatrix}\begin{pmatrix}1&w\\ -w&1\end{pmatrix}^{-1}\\
&=\frac{1}{2}\begin{pmatrix}1&w\\  -w&1\end{pmatrix}\begin{pmatrix}a&\\ &wa^{-1}w\end{pmatrix}\begin{pmatrix}1&-w\\  w&1\end{pmatrix}.
\end{aligned}$$ Rewriting this as a matrix in $\Sp(\FW)$ gives
$$g(a)_\tau=\frac{1}{2}\begin{pmatrix}1&w&&\\ -w&1&&\\ &&1&w\\ &&
-w&1
\end{pmatrix}\begin{pmatrix}
a&&&\\ &wa^{-1}w&&\\ &&a&\\ &&&wa^{-1}w
\end{pmatrix}\begin{pmatrix}
1&-w&&\\ w&1&&\\ &&1&-w\\ &&w&1\end{pmatrix}.$$

Let $\Phi_0$ be the characteristic function of $\M_{2n\times 4n}(\CO_F)$, where $\M_{2n\times 4n}$ denotes the set of $2n\times 4n$ matrices. Define the section $\CF_0$ by
$$\CF_0(g):=\xi(s)^{-1}\int_{\GL_{2n}(F)}\Phi_0((Z_1,0,0,Z_2)g)|\det Z|_F^s\ \d Z,$$ where $Z\in\GL_{2n}(F)$ is written in block form as $Z=(Z_1,Z_2)$ with $Z_i\in\M_{2n\times n}(F)$, and 
$$\xi(s):=\int_{\GL_{2n}(F)}\Phi_0((Z_1,0,0,Z_2))|\det Z|_F^s\ \d Z.$$
Then $\CF_0$ belongs to the degenerate principal series
$ I_\FP^\FG(s-\rho)$. 

By a direct computation, we obtain
$$\begin{aligned}&(Z_1,0,0,Z_2)\cdot g(a)_\tau\\ =&Z\cdot\underbrace{\begin{pmatrix}
1&0\\0&w
\end{pmatrix}\begin{pmatrix}
\dfrac{a+a^{-1}}{2}&\dfrac{-a+a^{-1}}{2}&&\\&&\dfrac{-a+a^{-1}}{2}&\dfrac{a+a^{-1}}{2}\end{pmatrix}\begin{pmatrix}
1&&&\\&w&&\\&&1&\\&&&w
\end{pmatrix}}_{h(a):=}.\end{aligned}$$
This motivates us to introduce the auxiliary function
$$\kappa(h):=\xi(s)^{-1}\int_{\GL_{2n}(F)}\Phi_0(Zh)|\det Z|^s\ \d Z,\quad  h\in\M_{2n\times 4n}(F).$$ Observe that
$$\CF_0(g(a)_\tau)=\kappa(h(a)).$$

By the invariance and homogeneity properties of  $\kappa$ (see \cite[Lemma 6.8]{gpsr}), we have 
$$\kappa(h)=\kappa'(h)^{-s},$$ where
$\kappa'(h):=\max_{M\in\CM(h)}(|\det M|)$ and $\CM(h)$ denotes the set of all $2n\times 2n$ minors of $h$. 
A straightforward computation shows that 
$$|\kappa'(h(a))|\gg\prod_{i=1}^n|a_i|_F^{-2}.$$ Therefore,
$$|\CF_0(g(a)_\tau)|\ll\prod_{i=1}^n|a_i|_E^{\Re(s)}.$$ 

The above proof, after a straightforward adaptation, also applies to the case of $G=\O(V)$. Suppose $G=\O(V)$ and let $S=\delta\gamma_rA_r\gamma_r^{-1}\delta^{-1}$ with $\delta\in\GL_{2n}(F)$. We write elements of $\FG=\O(\FV_\tau)$ in block form with respect to the decomposition
$$\FV_\tau=\tau V_F\oplus V_F.$$ Note that an element $g\in\GL_{2n}(E)$ can be written as $g=A+\tau B$ with $A, B\in\M_{2n}(F)$. When viewed as a matrix in $\GL_{4n}(F)$ via restriction of scalars, it admits the block form
$$g=\begin{pmatrix}A&\beta B\\ B&A\end{pmatrix},$$ where $\beta=\tau^2\in F^\times$.

Define $$\CF_0(g):=\xi(s)^{-1}\int_{\GL_{2n}(F)}\Phi_0((Z,0)g)|\det Z|^s\ \d Z,$$ where
$$\xi(s):=\int_{\GL_{2n}(F)}\Phi_0((Z,0))|\det Z|^s\ \d Z.$$
Then $\CF_0$ belongs to the degenerate principal series $I_\FP^\FG(s-\rho)$. 

For $g=A+\tau B\in\O(V)$, we have
$$(Z,0)g=Z(A,\beta B).$$ In particular, for 
$$g(a)=\delta\gamma_ra\gamma_r^{-1}\delta^{-1}\in S,$$ a direct computation yields
$$\begin{aligned}&(Z,0)\cdot g(a)\\
&=Z\cdot\underbrace{\delta\begin{pmatrix}1&\\&w\end{pmatrix}\begin{pmatrix}\dfrac{a+a^{-1}}{2}&&&\dfrac{-a+a^{-1}}{2}\\ &\dfrac{a+a^{-1}}{2} &\dfrac{\beta(-a+a^{-1})}{2}& \end{pmatrix}\begin{pmatrix}1&&&\\&w&&\\ &&1&\\ &&&w\end{pmatrix}\begin{pmatrix}\delta^{-1}&\\ &\delta^{-1}\end{pmatrix}}_{h(a):=}.\end{aligned}$$ Consequently,
$$\CF_0(g(a))=\kappa(h(a))=\kappa'(h(a))^{-s}.$$ 
A straightforward estimate then gives
$$|\CF_0(g(a))|\ll\prod_{i=1}^r|a_i|_E^{\Re(s)}.$$
\end{proof}

\subsubsection{Completion of the proof} We now complete the proof of Proposition \ref{prop zeta-conv}.

\begin{proof}[Proof of Proposition \ref{prop zeta-conv}]
We prove this proposition for $G=\Sp(W)$; the orthogonal case follows by the same argument, with the obvious notational changes.

Let $P$ be a minimal $\iota$-parabolic subgroup of $G$ and set $S:=S_P$.
By the weak Cartan decomposition (\ref{equ cartan}) and the volume estimate (\ref{equ estimate2}), it suffices to study the integral
$$\int_{S^+}\CF(a_\tau)\varphi_{\alpha,v}(a)\delta_P^{-1}(a)\ \d a.$$
Identify $S$ with $(F^\times)^n$.  Then  
$$\delta_{P}(a)=\prod_{i=1}^n|a_i|_E^{2\rho+1-2i},\quad  a=(a_1,...,a_n)\in S.$$ Assume first that $\alpha$ is relatively square-integrable. By (\ref{equ estimate1}) and (\ref{equ estimate3}),  for any $d\in\BN$ we have $$|\varphi_{\alpha,v}(a)|\ll\delta_P(a)^{1/2}(a)\N_{-d}(a).$$ Applying Lemma \ref{lem asym-sec}, we obtain
$$\begin{aligned}\int_{S^+}|\CF(a_\tau)\varphi_{\alpha,v}(a)|\delta_P^{-1}(a)\ \d a&\ll\int_{S^+}\prod_{i=1}^r|a_i|_E^{\Re(s)+\rho}\cdot\delta_{P}^{1/2}(a)\N_{-d}(a)\cdot\delta_{P}^{-1}(a)\ \d a\\
&=\int_{S^+}\prod_{i=1}^n|a_i|_E^{\Re(s)+i-1/2}\N_{-d}(a)\ \d a.\end{aligned}$$ A standard estimate shows that this integral converges absolutely for $\Re(s)\geq-\frac{1}{2}$ provided $d$ is chosen sufficiently large. The case where $\alpha$ is relatively tempered is handled similarly, using the corresponding estimate in \eqref{equ estimate3} for relatively tempered periods.
\end{proof}

\subsection{Test functions} From now on, we return to the general setting $$\dim W_F=2n,\quad \dim V_F=2m,$$ and resume the notation in Sects. \ref{sec deg} and \ref{sec mult}.
We denote the relevant Galois symmetric spaces by $$X_{W_F}:=\Sp(W_F)\bs\Sp(W),\quad X_{V_F}:=\O(V_F)\bs\O(V).$$

Recall that the Weil representation $\omega_{V,W,\psi}$ of $\O(V)\times\Sp(W)$ is realized on the Schwartz space $C_c^\infty(\BY)$, while the Weil representation $\omega_{V_F,\FW,\psi_F}$ of $\O(V_F)\times\Sp(\FW)$ is realized on the Schwartz space $C_c^\infty(\BY^\square)$. We identify $C_c^\infty(\BY^\square)$ with $C_c^\infty(\BY)$. Furthermore, $\omega_{V_F,\FW,\psi_F}$ admits another realization on the Schwartz space $C_c^\infty(\BY^{\square'})$, and we have the intertwining operator 
$$\CI_\FW: C_c^\infty(\BY)=C_c^\infty(\BY^\square)\lra C_c^\infty(\BY^{\square'}),$$ as defined in \eqref{equ int-1}.
By composing $R_\FW$ with $\CI_\FW$ and applying the $\tau$-twist, we obtain a linear map
\begin{equation}\label{equ test-map}p_{V_F}:C_c^\infty(\BY)\lra C_\sm\left(X_{W_F}\right),\end{equation} given explicitly by
$$\begin{aligned}p_{V_F}(\phi)(g)&=\omega_{V_F,\FW,\psi_F}(g_\tau)(\CI_\FW(\phi))(0)\\
&=\CI_\FW(\omega_{V,W,\psi}(g_\tau)\phi)(0)\\
&=\CI_\FW(\omega_{V,W,\psi_\tau}(g)\phi)(0).\end{aligned}$$
In a completely analogous fashion, using the Weil representations $\omega_{V_\rc,W,\psi}$ and $\omega_{V_{\rc,F},\FW,\psi_F}$, we define a map
$$p_{V_{\rc,F}}:C_c^\infty(\BY_\rc)\lra C_\sm(X_{W_F}),$$ where $\BY_\rc=Y\otimes_EV_\rc$.  

We also have the Weil representations $\omega_{V,W,\psi_\tau}$ of $\O(V)\times\Sp(W)$ and $\omega_{\FV_\tau,W_F,\psi_F}$ of $\O(\FV_\tau)\times\Sp(W_F)$; both are realized on the common Schwartz space $C_c^\infty(\BY)=C_c^\infty(\BY_\tau)=C_c^\infty(\BY_\tau^\square)$. By composing the intertwining operator (\ref{equ int-2})
$$\CI_{\FV_\tau}:C_c^\infty(\BY_\tau^\square)\lra C_c^\infty(\BY_\tau^{\square'})$$ with the Rallis map $R_{\FV_\tau}$, we obtain a linear map
\begin{equation}\label{equ test-map2}
q_{W_F}:C_c^\infty(\BY)\lra C_\sm\left(X_{V_F}\right),\end{equation} given explicitly by
$$q_{W_F}(\phi)(h)=\omega_{\FV_\tau,W_F,\psi_F}(h)(\CI_{\FV_\tau}(\phi))(0)=\CI_{\FV_\tau}(\omega_{V_\tau,W,\psi}(h)\phi)(0).$$ 

\begin{lem}\label{lem test-fcn-com}
For any $\phi\in C_c^\infty(\BY)$, $g\in\Sp(W)$, and $h\in\O(V)$, we have
$$p_{V_F}(\omega_{V,W,\psi_\tau}(h)\phi)(g)=q_{W_F}(\omega_{V,W,\psi_\tau}(g)\phi)(h).$$ 
\end{lem}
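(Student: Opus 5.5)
Unwinding the definitions, both sides of the identity equal $\CI$ of a single function evaluated at $0$, and Lemma \ref{lem int-val1} identifies the two intertwining operators at $0$.

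First, the left-hand side. By the definition of $p_{V_F}$,
$$p_{V_F}(\omega_{V,W,\psi_\tau}(h)\phi)(g)=\CI_\FW\left(\omega_{V,W,\psi_\tau}(g)\omega_{V,W,\psi_\tau}(h)\phi\right)(0).$$
This is legitimate because the formula $p_{V_F}(\phi')(g)=\CI_\FW(\omega_{V,W,\psi_\tau}(g)\phi')(0)$ holds for an arbitrary $\phi'\in C_c^\infty(\BY)$. Here we use (\ref{equ weil equ4}) together with the $\Sp(W)$-equivariance of $\CI_\FW$, which holds since $\CI_\FW$ intertwines two models of $\omega_{V_F,\FW,\psi_F}$ and $\Sp(W)\subset\Sp(\FW)$.

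Second, the right-hand side. By the definition of $q_{W_F}$ and (\ref{equ weil equ2}), which identifies $\omega_{V_\tau,W,\psi}(h)$ with $\omega_{V,W,\psi_\tau}(h)$ on $C_c^\infty(\BY)$ under the identifications $\O(V_\tau)=\O(V)$ and $C_c^\infty(\BY_\tau)=C_c^\infty(\BY)$, we get
$$q_{W_F}(\omega_{V,W,\psi_\tau}(g)\phi)(h)=\CI_{\FV_\tau}\left(\omega_{V,W,\psi_\tau}(h)\omega_{V,W,\psi_\tau}(g)\phi\right)(0).$$

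Third, we compare. Since $\omega_{V,W,\psi_\tau}$ is a representation of the direct product $\O(V)\times\Sp(W)$, the operators $\omega_{V,W,\psi_\tau}(g)$ and $\omega_{V,W,\psi_\tau}(h)$ commute. With the compatible splittings of Sect. \ref{subsec splitting}, the images of $\Sp(W)$ and $\O(V)$ in $\Mp(\BW)_{\psi_\tau}$ commute on the nose, so no scalar ambiguity arises. Both sides are therefore $\CI$ applied to the same function $\phi'':=\omega_{V,W,\psi_\tau}(g)\omega_{V,W,\psi_\tau}(h)\phi\in C_c^\infty(\BY)$ and evaluated at $0$. Lemma \ref{lem int-val1} gives $\CI_\FW(\phi'')(0)=\CI_{\FV_\tau}(\phi'')(0)$, which concludes the proof.

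The only real obstacle is bookkeeping: checking that the various identifications (\ref{equ weil equ2})–(\ref{equ weil equ4}) are literal equalities of operators on the common space $C_c^\infty(\BY)$, not merely equalities up to isomorphism or a scalar. In particular, the $g_\tau$ twist in $p_{V_F}$ must be absorbed exactly by $\omega_{V,W,\psi_\tau}(g)=\omega_{V,W,\psi}(g_\tau)$. The paper's choices of splittings were made precisely to guarantee this, so I would just cite those diagrams.
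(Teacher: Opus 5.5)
Your proposal is correct and follows essentially the same route as the paper. Both arguments unwind the definitions of $p_{V_F}$ and $q_{W_F}$ so that each side becomes an intertwining operator applied to $\omega_{V,W,\psi_\tau}(g)\omega_{V,W,\psi_\tau}(h)\phi$ and evaluated at $0$, use that the $\O(V)$- and $\Sp(W)$-actions commute, and conclude by Lemma \ref{lem int-val1}. Your remarks on the compatibility of splittings simply make explicit bookkeeping that the paper leaves implicit.
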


\begin{proof}
Using the definitions of $p_{V_F}$ and $q_{W_F}$ together with Lemma \ref{lem int-val1}, we have
$$\begin{aligned}
p_{V_F}(\omega_{V,W,\psi_\tau}(h)\phi)(g)
&=\CI_{\FW}(\omega_{V,W,\psi_\tau}(g)\omega_{V,W,\psi_\tau}(h)\phi)(0)\\
&=\CI_{\FV_\tau}(\omega_{V,W,\psi_\tau}(h)\omega_{V,W,\psi_\tau}(g)\phi)(0)\\
&=q_{W_F}(\omega_{V,W,\psi_\tau}(g)\phi)(h).
\end{aligned}$$
\end{proof}

\begin{defn}\label{def function}\begin{enumerate}
\item Let $\CS\left(X_{W_F}\right)$ be the image of the map 
$$p_{V_F}\oplus p_{V_{\rc,F}}: C_c^\infty(\BY)\oplus C_c^\infty(\BY_\rc)\lra C_\sm\left(X_{W_F}\right),$$ and let $\CS\left(X_{V_F}\right)$ be the image of $p_{W_F}$. We call $\CS\left(X_{W_F}\right)$ and 
$\CS\left(X_{V_F}\right)$ the \emph{spaces of test functions}.
\item For $f\in C_\sm(X_{W_F})$ and $f'\in C_\sm(X_{V_F})$, we say that $f$ and $f'$ \emph{correspond to each other} if there exists $\phi\in C_c^\infty(\BY)$ such that $$f=p_{V_F}(\phi),\quad f'=q_{W_F}(\phi).$$
\end{enumerate}
\end{defn}

\begin{rem}\label{rem fcn-deg}
From the definition we obtain the following descriptions. For the symplectic side, $$\CS(X_{W_F})=\left(R_\FW(V_F)+R_\FW(V_{\rc,F})\right)^\tau|_{\Sp(W)},$$ where the superscript $\tau$ denotes composition with $\varphi_\tau$. For the orthogonal side, we have
$$\CS(X_{V_F})=R_{\FV_\tau}(W_F)|_{\O(V)}=(R_{\FV_\tau}(W_F)+R_{\FV_\tau}(W_F)\otimes\det)|_{\O(V)},$$ since $R_{\FV_\tau}(W_F)=R_{\FV_\tau}(W_F)\otimes\det$.
\end{rem}

\begin{prop}\label{prop test-analytic}
If $m>n$, then
$$C_c^\infty\left(X_{W_F}\right)\subset \CS\left(X_{W_F}\right)\subset \CC\left(X_{W_F}\right).$$ If $m\leq n$, then
$$C_c^\infty\left(X_{V_F}\right)\subset \CS\left(X_{V_F}\right)\subset \CC\left(X_{V_F}\right).$$
\end{prop}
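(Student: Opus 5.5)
The plan rests on Remark \ref{rem fcn-deg}, which identifies the test function spaces with restrictions of degenerate principal series vectors. For $m>n$ this gives $\CS(X_{W_F})=(R_\FW(V_F)+R_\FW(V_{\rc,F}))^\tau|_{\Sp(W)}$. By Proposition \ref{prop degps-symp}(1)(a), the sum $R_\FW(V_F)+R_\FW(V_{\rc,F})$ is all of $I_\FP^\FG(s_0)$. Hence
$$\CS(X_{W_F})=\{g\mapsto \CF(g_\tau)\mid \CF\in I_\FP^\FG(s_0)\}.$$
The orthogonal case $m\le n$ is parallel. By Proposition \ref{prop degps-orth}(1)(a), $I_\FQ^\FH(-s_0)=R_{\FV_\tau}(W_F)+R_{\FV_\tau}(W_F)\otimes\det$. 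Since $\det$ restricts trivially to $\O(V)$ (as noted in the proof of Theorem \ref{thm orth-symp}), we get $\CS(X_{V_F})=I_\FQ^\FH(-s_0)|_{\O(V)}$. So both assertions reduce to statements about restrictions of the full degenerate principal series to $G$, viewed as functions on $X$.

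For the lower inclusion, I would use the open orbit. Functions in $I(s)$ vanishing on the closed complement $\overline{\CX}_1$ form $I_1(s)=J_0(s)$. In the symplectic case, Proposition \ref{prop degps-symp-2} identifies this with $C_c^\infty(\Sp(W_F)^\tau\bs\Sp(W))$ via restriction. After twisting by $\varphi_\tau$, this is $C_c^\infty(X_{W_F})$. Concretely, a function in $C_c^\infty(X_{W_F})$ transported to $\FP\bs\FG$ has compact support inside the open orbit $\FP\cdot G$, so it extends by zero to a smooth section.

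In the orthogonal case, Proposition \ref{prop degps-orth-2} gives $J_0=\bigoplus_{V'_F}C_c^\infty(\O(V'_F)\bs\O(V))$. The summand for the base point $\tau V_F$, whose stabilizer is $\O(V_F)$, contains $C_c^\infty(X_{V_F})$. One checks that restriction to $G$ of a section supported in that single orbit is exactly a compactly supported function on $X_{V_F}$. The restriction map here is $F\mapsto (g\mapsto F(g))$ evaluated at the base point, so it is compatible with the definition of $\CS$.

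For the upper inclusion, I need $|\CF(g_\tau)|\ll \Xi^X(g)\N_{-d}(g)$ for every $d$. Smoothness is clear, since $\CF$ is right-invariant under an open compact of $\FG$. By the weak Cartan decomposition (\ref{equ cartan}), $K$-finiteness, and finiteness of the coset data, it suffices to bound $\CF$ on $S_P^+$ translated by the compact set $\Omega$. Translating by $\Omega$ only replaces $\CF$ by finitely many other sections.

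Lemma \ref{lem asym-sec} gives $|\CF(g(a)_\tau)|\ll\prod_i|a_i|_E^{\Re(s)+\rho}$ for $a\in A^+$. Here $\rho$ is the constant for $\FG$ relative to $\dim W$ (resp. $\dim V$), and the lemma is stated uniformly for $\CF\in I(s)$. The estimate (\ref{equ estimate1}) gives $\Xi^X(a)\gg\delta_P^{1/2}(a)\N_{-d}(a)$ with $\delta_P^{1/2}(a)=\prod|a_i|_E^{\rho+1/2-i}$ in the notation of the convergence proof. So the ratio $|\CF|/\Xi^X$ is bounded by $\prod|a_i|_E^{\Re(s)+i-1/2}\N_{d}(a)$.

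In the symplectic case $s=s_0=m-n-\tfrac12\ge\tfrac12$ since $m>n$, so every exponent $s_0+i-\tfrac12\ge i>0$. In the orthogonal case $s=-s_0=n-m+\tfrac12\ge\tfrac12$ since $m\le n$, with the same conclusion. Strictly positive powers of $|a_i|_E$ beat any power of $\log$, so $|\CF(a_\tau)|\ll\Xi^X(a)\N_{-d}(a)$ for all $d$. This gives membership in $\CC(X)$.

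The main obstacle is making this uniform over all of $X$, not just along the torus, and handling the orthogonal case properly. There one must run over all conjugacy classes of maximal $\iota$-split tori $S$ (split rank $r$, possibly smaller). One must also check that the exponent bookkeeping between $\delta_P$ for $\O(V)$ with $\dim V=2m$ and the $\rho$ of Lemma \ref{lem asym-sec} still leaves strictly positive exponents. I would first redo the computation of $\kappa'(h(a))$ to obtain the bound $\prod|a_i|_E^{\Re(s)+\rho_{\FG}}$ with the correct $\rho_{\FG}$ for general $m,n$, namely $\rho=\frac{2n+1}{2}$ for $\Sp(\FW)$ and $\frac{2m-1}{2}$ for $\O(\FV_\tau)$. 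I would then compare it with $\delta_P^{1/2}$ computed from the root data of $S$ in $G$. The margin $\Re(s)\ge\tfrac12$ makes me expect the inequality to hold with room to spare.
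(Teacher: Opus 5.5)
Your proposal is correct and follows essentially the same route as the paper. Both arguments identify $\CS$ with the restriction of the full degenerate principal series, using Remark \ref{rem fcn-deg} together with Propositions \ref{prop degps-symp} and \ref{prop degps-orth}. Both then obtain the lower inclusion from the open-orbit piece $J_0=I_1$ (Propositions \ref{prop degps-symp-2} and \ref{prop degps-orth-2}) and the upper inclusion from Lemma \ref{lem asym-sec}. Your explicit exponent bookkeeping fills in details the paper leaves implicit: with $\rho=\frac{2m-1}{2}$ in the orthogonal case, the remaining exponents are $\Re(s)+i-\tfrac12$, which are strictly positive because $\Re(s)\geq\tfrac12$.
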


\begin{proof}
From Remark \ref{rem fcn-deg}, Proposition \ref{prop degps-symp} and Proposition \ref{prop degps-orth} we obtain $$\CS(X_{W_F})=I_\FP^\FG(s_0)^\tau|_{\Sp(W)},\quad \CS(X_{V_F})=I_\FQ^\FH(-s_0)|_{\O(V)}.$$ Note that 
$$C_c^\infty(X_{W_F}^\tau)^\tau= C_c^\infty(X_{W_F}).$$
By Proposition \ref{prop degps-symp-2} and Proposition \ref{prop degps-orth-2} we then have
 $$C_c^\infty(X_{W_F})\subset\CS(X_{W_F}),\quad C_c^\infty(X_{V_F})\subset\CS(X_{V_F}).$$ The asymptotic behavior of the degenerate principal series (Lemma \ref{lem asym-sec}) further gives
 $$\CS(X_{W_F})\subset \CC(X_{W_F}),\quad \CS(X_{V_F})\subset \CC(X_{V_F}).$$ 
\end{proof}

\begin{rem}\label{rem test-analytic}
Furthermore, Lemma \ref{lem asym-sec} implies the following temperedness properties: $$\CS\left(X_{W_F}\right)\subset\CC_\temp\left(X_{W_F}\right)\quad\textrm{for}\ \ m=n,$$ and
$$\CS\left(X_{V_F}\right)\subset\CC_\temp\left(X_{V_F}\right)\quad\textrm{for}\ \ m=n+1.$$
\end{rem}

\subsection{Transfer of periods}
Let $\pi\in\Irr\left(\Sp(W)\right)$. For $\alpha\in\Hom_{\Sp(W_F)}\left(\pi^\vee,\BC\right)$, we formally define a linear form $$T_{W_F}(\alpha):\omega_{V,W,\psi_\tau}\otimes\pi^\vee\lra\BC$$ by
\begin{equation}\label{equ def-transf Sp}
\begin{aligned}T_{W_F}(\alpha)(\phi,v)&=\int_{X_{W_F}}p_{V_F}(\phi)(g)\alpha\left(\pi^\vee(g)v\right)\ \d g\\
&=\int_{X_{W_F}}\CI_\FW(\omega_{V,W,\psi_\tau}(g)\phi)(0)\alpha\left(\pi^\vee(g)v\right)\ \d g.\end{aligned}\end{equation} Similarly, for $\sigma\in\Irr\left(\O(V)\right)$ and $\beta\in\Hom_{\O(V_F)}\left(\sigma^\vee,\BC\right)$, we formally define  $$T_{V_F}(\beta):\omega_{V,W,\psi_\tau}\otimes\sigma^\vee\lra\BC$$ by
\begin{equation}\label{equ def-transf O}T_{V_F}(\beta)(\phi,w)=\int_{X_{V_F}}q_{W_F}(\phi)(h)\beta\left(\sigma^\vee(h)w\right)\ \d h.\end{equation}
The linear forms $T_{W_F}(\alpha)$ and $T_{V_F}(\beta)$ are well defined whenever the integrals in (\ref{equ def-transf Sp}) and (\ref{equ def-transf O}) converge absolutely.

\begin{lem}\label{lem well-def-transf} 
\begin{enumerate}
\item The linear form $T_{W_F}(\alpha)$ is well defined in the following cases:
\begin{itemize}
\item $\alpha$ is relatively supercuspidal;
\item $\alpha$ is relatively square-integrable and $m\geq n$;
\item $\alpha$ is relatively tempered and $m>n$.
\end{itemize}
\item The linear form $T_{V_F}(\beta)$ is well defined in the following cases:
\begin{itemize}
\item $\beta$ is relatively supercuspidal;
\item $\beta$ is relatively square-integrable and $m\leq n+1$;
\item $\beta$ is relatively tempered and $m\leq n$.
\end{itemize}
\end{enumerate}
\end{lem}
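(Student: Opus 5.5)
The plan is to recognize both transfer integrals as base change doubling zeta integrals at a specific point $s$, and then read off convergence from Proposition~\ref{prop zeta-conv} after matching the normalizations of $s$.

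\textbf{Step 1: reduction to zeta integrals.} For (1), the function $g\mapsto \CI_\FW(\omega_{V,W,\psi_\tau}(g)\phi)(0)$ equals $\CF(g_\tau)$, where
$$\CF:=R_\FW(\CI_\FW(\phi))\in I_\FP^\FG(s_0),$$
by the definition of $p_{V_F}$ and the Rallis map. Hence
$$T_{W_F}(\alpha)(\phi,v)=Z(s_0,\alpha,\CF,v),\qquad s_0=\tfrac{2m-2n-1}{2}.$$
For (2), the function $q_{W_F}(\phi)$ is the restriction to $\O(V)$ of a section in $I_\FQ^\FH(-s_0)$. The relevant exponent is therefore $-s_0=\frac{2n-2m+1}{2}$.

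\textbf{Step 2: the relatively supercuspidal case.} Here $\varphi_{\alpha,v}$ is compactly supported modulo $H$, and $\CF$ is locally constant, hence locally bounded. So the integrand is compactly supported on $X$ and the integral converges for any $s$.

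\textbf{Step 3: the square-integrable and tempered cases.} Proposition~\ref{prop zeta-conv} was proved under the normalization $\dim W=\dim V$, so I will rerun its proof with general $m,n$.

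In the symplectic case, the section $\CF_0$ of Lemma~\ref{lem asym-sec} built from $M_{2n\times 4n}$ gives
$$|\CF(g(a)_\tau)|\ll\prod_i|a_i|_E^{\Re(s)+\rho_W},\qquad \rho_W=\tfrac{2n+1}{2}.$$
Combining this with $|\varphi_{\alpha,v}|\ll \delta_P^{1/2}\N_{-d}$ (Schwartz, in the square-integrable case) or $\ll\delta_P^{1/2}\N_{d}$ (tempered), and with $\delta_P^{-1}$ from the volume estimate~\eqref{equ estimate2}, the integrand on $S^+$ is controlled by
$$\prod_i|a_i|_E^{\Re(s)+i-1/2}.$$
Convergence then needs $\Re(s)\ge -\tfrac12$ in the square-integrable case (using the logarithmic decay $\N_{-d}$ when $i=1$ gives exponent $0$), and $\Re(s)>-\tfrac12$ in the tempered case. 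At $s=s_0$ the square-integrable condition becomes $2m-2n-1\ge -1$, i.e. $m\ge n$, and the tempered condition becomes $m>n$.

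In the orthogonal case, the analogous computation uses $\rho_V=\frac{2m-1}{2}$ and the modulus character of the minimal $\iota$-parabolic of $\O(V)$ (of rank $r\le m$). The bound is again $\prod_i|a_i|_E^{\Re(s)+i-1/2}$ up to logarithmic factors, evaluated at $s=-s_0$. The condition $-s_0\ge-\frac12$ reads $2n-2m+1\ge -1$, i.e. $m\le n+1$, and the strict version gives $m\le n$.

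\textbf{Main obstacle.} The delicate point is the orthogonal case when $V$ is not split and $r<m$. There I must check that the exponent bookkeeping between $\delta_P$ for $\O(V)$ and the growth of the section still produces the shift $i-\tfrac12$ uniformly over the various $\iota$-split tori $S=\delta S_r\delta^{-1}$. I expect this to follow from the explicit matrix $h(a)$ in the proof of Lemma~\ref{lem asym-sec}, whose $2m\times 2m$ minors grow like $\prod|a_i|_F^{-2}$ independently of the anisotropic part. Combined with $\delta_P(a)=\prod|a_i|_E^{2m-2i}$ for $\O(V)$, this yields the claimed exponent.
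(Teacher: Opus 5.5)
Your proposal is correct and is essentially the paper's argument. The paper derives the lemma from Proposition~\ref{prop test-analytic} and Remark~\ref{rem test-analytic}: it uses Lemma~\ref{lem asym-sec} to place $p_{V_F}(\phi)$ and $q_{W_F}(\phi)$ (sections at $s_0$ and $-s_0$) in $\CC$ or $\CC_\temp$, then pairs them with the Schwartz or tempered matrix coefficients, whereas you bound the product integrand on $S^+$ directly via the zeta-integral identification, which is the same estimate with exponent $\Re(s)+i-\tfrac12$. Your rerun for general $m,n$ (with $\rho=\tfrac{2n+1}{2}$, resp.\ $\tfrac{2m-1}{2}$, giving the thresholds $m\ge n$, $m>n$, $m\le n+1$, $m\le n$) is correct, and slightly more careful than the paper, which quotes Lemma~\ref{lem asym-sec} from the equal-dimension setting.
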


\begin{proof}
This is immediate from Proposition \ref{prop test-analytic} and Remark \ref{rem test-analytic}.
\end{proof}

From now on, we assume that $T_{W_F}(\alpha)$ and $T_{V_F}(\beta)$ are well defined.
It is clear that $T_{W_F}(\alpha)$ is $\Sp(W)$-invariant and $T_{V_F}(\beta)$ is $\O(V)$-invariant. Hence, by the definition of the big theta lift together with the base change seesaw identity, $T_{W_F}(\alpha)$ descends to a linear form
$$\Theta_{V,W,\psi_\tau}(\pi)\lra\BC,$$ which is  $\O(V_F)$-invariant. We continue to denote this induced map by $T_{W_F}(\alpha)$; thus 
\begin{equation}\label{equ period-symp}T_{W_F}(\alpha)\in\Hom_{\O(V_F)}\left(\Theta_{V,W,\psi_\tau}(\pi),\BC\right).\end{equation} More concretely, let $\theta(\phi,v)$ denote the image of $\phi\otimes v$ under the canonical surjection
$$\omega_{V,W,\psi_\tau}\otimes\pi^\vee\lra\Theta_{V,W,\psi_\tau}(\pi).$$ Then $T_{W_F}(\alpha)$ is characterized by $$T_{W_F}(\alpha)(\theta(\phi,v)):=T_{W_F}(\alpha)(\phi,v).$$
In an entirely analogous manner, we obtain a local period \begin{equation}\label{equ period-orth}T_{V_F}(\beta)\in\Hom_{\Sp(W_F)}\left(\Theta_{V,W,\psi_\tau}(\sigma),\BC\right).\end{equation}

According to Lemma \ref{lem well-def-transf}, the linear map
 \begin{equation}\label{equ period-symp2}T_{W_F}:\Hom_{\Sp(W_F)}\left(\pi^\vee,\BC\right)\lra\Hom_{\O(V_F)}\left(\Theta_{V,W,\psi_\tau}(\pi),\BC\right),\quad\alpha\mapsto T_{W_F}(\alpha),\end{equation} is well defined in the following cases:
 \begin{itemize}
\item $\pi$ is  $\Sp(W_F)$-relatively supercuspidal;
\item $\pi$ is  $\Sp(W_F)$-relatively square-integrable and $m\geq n$;
\item $\pi$ is $\Sp(W_F)$-relatively tempered and $m>n$.
\end{itemize} 
Similarly, we have a well defined linear map
\begin{equation}\label{equ period-orth2}T_{V_F}:\Hom_{\O(V_F)}\left(\sigma^\vee,\BC\right)\lra\Hom_{\Sp(W_F)}\left(\Theta_{V,W,\psi_\tau}(\sigma),\BC\right),\quad \beta\mapsto T_{V_F}(\beta),\end{equation} in the following cases:
 \begin{itemize}
\item $\sigma$ is $\O(V_F)$-relatively supercuspidal;
\item $\sigma$ is $\O(V_F)$-relatively square-integrable and $m\leq n+1$;
\item $\sigma$ is $\O(V_F)$-relatively tempered and $m\leq n$.
\end{itemize}

As $V'_F$ ranges over $\RH^1(E/F,\O(V))$, we may combine the maps $T_{V'_F}$ to obtain a linear map $T_V:=\bigoplus_{V'_F\in\RH^1(E/F,\O(V))}T_{V'_F}: $
$$\bigoplus_{V'_F\in\RH^1(E/F,\O(V))}\Hom_{\O(V'_F)}\left(\sigma^\vee,\BC\right)\lra\Hom_{\Sp(W_F)}\left(\Theta_{V,W,\psi_\tau}(\sigma),\BC\right).$$

\begin{thm}\label{thm transf1}
Let $\pi\in\Irr(\Sp(W))$.
Assume that $m>n$ and that $\Theta_{V,W,\psi_\tau}(\pi)$ is the first occurrence. 
\begin{enumerate}
\item If $\pi$ is tempered,
then $T_{W_F}$ is injective.
\item If $\pi$ is square-integrable and $m=n+1$, then $T_{W_F}$ is an isomorphism.
\item If $\pi$ is supercuspidal, then $T_{W_F}$ is an isomorphism.
\end{enumerate}
\end{thm}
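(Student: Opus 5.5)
The plan is to identify $T_{W_F}$ with the chain of maps already built in the proof of Theorem \ref{thm symp-orth}. Once that identification is made, injectivity and bijectivity follow from what was proved there.

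First, observe that the zeta integral realizes the isomorphism \eqref{equ pd-isom1}. For $\alpha\in\Hom_{\Sp(W_F)}(\pi^\vee,\BC)$, define a linear form $\Lambda_\alpha$ on $I_\FP^\FG(s_0)^\tau|_{\Sp(W)}\otimes\pi^\vee$ by
$$\Lambda_\alpha(\CF\otimes v)=Z(s_0,\alpha,\CF,v).$$
This is the base change doubling zeta integral at $s=s_0=\frac{2m-2n-1}{2}\geq\frac12$. It converges by Proposition \ref{prop zeta-conv}, since tempered, hence relatively tempered, periods are allowed there. The resulting element of $\Hom_{\Sp(W)}(I_\FP^\FG(s_0),\pi_\tau)$ restricts on the bottom piece $J_0(s_0)\cong C_c^\infty(\Sp(W_F)^\tau\bs\Sp(W))$ to the functional $f\otimes v\mapsto\int_{X^\tau}f(g)\alpha(\pi_\tau^\vee(g)v)\,\d g$. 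Under Frobenius reciprocity this is exactly the functional attached to $\alpha$ in the identification $\Hom_{\Sp(W_F)}(\pi^\vee,\BC)\cong\Hom_{\Sp(W)}(J_0(s_0),\pi_\tau)$. Because $\Hom$ and $\Ext^1$ from $J_k(s_0)$, $k>0$, vanish, restriction to $J_0$ is an isomorphism. Hence $\alpha\mapsto\Lambda_\alpha$ is the inverse of restriction, i.e. it is the isomorphism \eqref{equ pd-isom1}. In particular $\Lambda_\alpha\neq0$ whenever $\alpha\neq0$: a nonzero $\alpha$ gives a nonzero functional on $C_c^\infty$ functions, because $\varphi_{\alpha,v}\neq0$ for some $v$.

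Second, compare with the seesaw map \eqref{equ inj1}. By Proposition \ref{prop test-analytic} and Remark \ref{rem fcn-deg}, when $m>n$ we have $\CS(X_{W_F})=I_\FP^\FG(s_0)^\tau|_{\Sp(W)}$, and $p_{V_F}$ is the twisted Rallis map composed with $\CI_\FW$. By definition, $T_{W_F}(\alpha)(\phi,v)=\Lambda_\alpha(p_{V_F}(\phi)\otimes v)$. Thus $T_{W_F}(\alpha)$ is obtained by restricting $\Lambda_\alpha$ to $R_\FW(V_F)$ and transporting it through Corollary \ref{cor seesaw} and Lemma \ref{lem theta relation}. That composite is precisely the map \eqref{equ inj1}, after unwinding the seesaw identification: an $\Sp(W)$-map $R_\FW(V_F)\to\pi_\tau$ corresponds to the $\O(V_F)$-invariant form $\theta(\phi,v)\mapsto$ its pairing with $v$. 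Therefore $T_{W_F}$ equals the composite of \eqref{equ pd-isom1} with \eqref{equ inj1}, both of which are injective for tempered $\pi$. This proves (1).

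For (3), supercuspidality makes both maps isomorphisms, by \eqref{equ pd-isom3} and the surjectivity noted after \eqref{equ exact-1}. For (2), when $m=n+1$ and $\pi$ is square-integrable, Theorem \ref{thm symp-orth}(2) gives equal finite dimensions. Combined with injectivity from (1), since square-integrable implies tempered, $T_{W_F}$ is an isomorphism.

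The main obstacle is the first step. One must check carefully that the Frobenius-reciprocity identification of $\Hom(J_0(s_0),\pi_\tau)$ with periods is literally given by integration over $X^\tau$, with the twist $g_\tau$ and measures consistent. One must also confirm that the extension across $J_k$, $k>0$, produced by $\Ext^1$-vanishing coincides with the convergent integral $Z(s_0,\alpha,\cdot,\cdot)$. The latter follows from uniqueness, since any two extensions agree on $J_0$ and $\Hom(I/J_0,\pi_\tau)=0$. This is where I would focus.
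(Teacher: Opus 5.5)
Your proposal is correct and follows essentially the same route as the paper: you realize the Frobenius-reciprocity functional on $C_c^\infty(X_{W_F})\otimes\pi^\vee$ as the restriction of the convergent zeta integral $Z(s_0,\alpha,\cdot,\cdot)$ on $I_\FP^\FG(s_0)^\tau\otimes\pi^\vee$, observe that $T_{W_F}(\alpha)$ is its restriction to $R_\FW(V_F)^\tau\otimes\pi^\vee$, and deduce non-vanishing from the injection \eqref{equ inj1}. The only minor difference is in (2), where you get bijectivity from injectivity plus the dimension equality of Theorem \ref{thm symp-orth}(2); this is a slightly more explicit justification of the paper's assertion that every map in its chain is an isomorphism, since for square-integrable $\pi$ the surjectivity of \eqref{equ inj1} itself comes from that same dimension squeeze.
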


\begin{proof} Let $\alpha$ be a non-zero element of $\Hom_{\Sp(W_F)}\left(\pi^\vee,\BC\right)$.
The Frobenius reciprocity gives a natural isomorphism $$\Hom_{\Sp(W_F)}\left(\pi^\vee,\BC\right)\stackrel{\simeq}{\lra}\Hom_{\Sp(W)}\left(C_c^\infty(X_{W_F})\otimes\pi^\vee,\BC\right),$$ which sends $\alpha$ to the linear form $\ell(\alpha)$ defined explicitly by
$$\ell(\alpha)(f,v)=\int_{X_{W_F}}f(g)\alpha(\pi^\vee(g)v)\ \d g.$$ By Proposition \ref{prop zeta-conv}, the hypotheses on $\pi$ guarantee  that $\ell(\alpha)$ extends to a non-zero element $$Z(s_0,\alpha,\textrm{-},\textrm{-})\in\Hom_{\Sp(W)}\left(I_\FP^\FG(s_0)^\tau\otimes\pi^\vee,\BC\right).$$ It is then immediate that $$T_{W_F}(\alpha)=Z(s_0,\alpha,\textrm{-},\textrm{-})|_{R_{\FW}(V_F)^\tau\otimes\pi^\vee}.$$ From the injection (\ref{equ inj1}) we deduce that $T_{W_F}(\alpha)$ is non-zero. Hence $T_{W_F}$ is injective. 

Under the assumptions of parts (2) and (3), it follows from the discussion above that $T_{W_F}$ factors as the composition of the following isomorphisms:
$$\begin{aligned}&\Hom_{\Sp(W_F)}\left(\pi^\vee,\BC\right)\stackrel{\simeq}{\lra}\Hom_{\Sp(W)}\left(C_c^\infty(X_{W_F})\otimes\pi^\vee,\BC\right)\stackrel{\simeq}{\lra}\Hom_{\Sp(W)}\left(I_\FP^\FG(s_0)^\tau\otimes\pi^\vee,\BC\right)\\
\stackrel{\simeq}{\lra}&\Hom_{\Sp(W)}(R_\FW(V_F)^\tau\otimes\pi^\vee,\BC)\stackrel{\simeq}{\lra}\Hom_{\Sp(W)}(R_\FW(V_F),\pi_\tau)\stackrel{\simeq}{\lra}\Hom_{\O(V_F)}\left(\Theta_{V,W,\psi_\tau}(\pi),\BC\right).\end{aligned}$$ Consequently, $T_{W_F}$ is an isomorphism.
\end{proof} 

\begin{thm}\label{thm transf2}
Let $\sigma\in\Irr(\O(V))$. Assume that $m\leq n$ and that $\Theta_{V,W,\psi_\tau}(\sigma)$ is the first occurrence. 
\begin{enumerate}
\item If $\sigma$ is tempered, then 
$T_V$ is injective.
\item If $\sigma$ is square-integrable and $m=n$, then
$T_V$ is an isomorphism.
\item If $\sigma$ is supercuspidal, then $T_V$ is an isomorphism.
\end{enumerate}
\end{thm}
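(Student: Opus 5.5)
We argue exactly as for Theorem \ref{thm transf1}, with the roles of the two groups interchanged and the chain of injections (\ref{equ inj4}) from the proof of Theorem \ref{thm orth-symp} replacing (\ref{equ inj1}).

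\emph{Step 1: the map $\ell$.} Fix $\beta=(\beta_{V'_F})_{V'_F}$ in $\bigoplus_{V'_F\in\RH^1(E/F,\O(V))}\Hom_{\O(V'_F)}(\sigma^\vee,\BC)$. By Frobenius reciprocity and the description of $J_0(s)=I_1(s)$ in Proposition \ref{prop degps-orth-2}, $\beta$ corresponds to an $\O(V)$-invariant form $\ell(\beta)$ on $J_0(s)\otimes\sigma^\vee$, given by
$$\sum_{V'_F}\int_{X_{V'_F}}f_{V'_F}(h)\,\beta_{V'_F}(\sigma^\vee(h)w)\,\d h,$$
and $\beta\mapsto\ell(\beta)$ is injective. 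Here the open orbit $\CX_0$ decomposes according to the classes $V'_F$, and a function supported on $\CX_0$ restricts on each orbit to a function on $X_{V'_F}$.

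\emph{Step 2: extension to the whole principal series.} Next we extend $\ell(\beta)$ to all of $I_\FQ^\FH(-s_0)$ by the base change doubling zeta integral. On the component of $V'_F$ it is $Z(-s_0,\beta_{V'_F},\CF,w)=\int_{X_{V'_F}}\CF(h\,g_{V'})\,\beta_{V'_F}(\sigma^\vee(h)w)\,\d h$, where $g_{V'}$ translates the base point $\tau V_F$ to the open orbit of $\tau V'_F$.

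Convergence is the main technical point. For $m\le n$ we have $-s_0=\frac{2n-2m+1}{2}\geq \frac12$, so Proposition \ref{prop zeta-conv} applies. That proposition was stated for $\dim V=\dim W$, so we redo its estimate: Lemma \ref{lem asym-sec} gives decay $\prod|a_i|_E^{\Re(s)+\rho}$ with $\rho=\frac{2m-1}{2}$, and this is more than enough since $\Re(s)=-s_0>0$. The tempered case needs $\Re(s)>-\frac12$, which holds. The same estimate shows that $\CS(X_{V_F})=I_\FQ^\FH(-s_0)|_{\O(V)}$ consists of Schwartz functions, so Lemma \ref{lem well-def-transf} applies. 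For the non-base-point orbits $V'_F$ one uses the same asymptotic bounds after conjugating by $g_{V'}$.

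\emph{Step 3: comparison with $T_V$.} Unwinding definitions with $q_{W_F}$ and Remark \ref{rem fcn-deg}, restricting this extension to $R_{\FV_\tau}(W_F)\otimes\sigma^\vee$ and descending through the seesaw (Corollary \ref{cor seesaw}) gives exactly $T_V(\beta)$. The point to check is that the Rallis section attached to $\phi$, evaluated on the $V'_F$-orbit, is $q_{W'_F}$ for the corresponding test function, so the sum over orbits matches the definition of $T_V=\bigoplus T_{V'_F}$.

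\emph{Step 4: injectivity and isomorphism.} Injectivity for tempered $\sigma$: by (\ref{equ pd-isom4}), restriction from $I_\FQ^\FH(-s_0)$ to $J_0(-s_0)$ is an isomorphism on $\Hom_{\O(V)}(-,\sigma)$. Hence the extension in Step 2 is nonzero whenever $\beta\ne0$. By the first injection in (\ref{equ inj4}) (which relies on the first-occurrence vanishing of $\Theta_{V,\wt W,\psi_\tau}(\sigma)$), its restriction to $R_{\FV_\tau}(W_F)$ is also nonzero, so $T_V(\beta)\neq0$.

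For parts (2) and (3), both injections in (\ref{equ inj4}) become isomorphisms. In the supercuspidal case this is stated; in the square-integrable case with $m=n$ we use (\ref{equ pd-isom5}) together with the dimension equality of Theorem \ref{thm orth-symp}(2). Then $T_V$ is an injective map between spaces of equal finite dimension (Delorme), hence an isomorphism; equivalently, it is a composition of isomorphisms as in Theorem \ref{thm transf1}.

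The main obstacle is Step 2 for the square-integrable case with $m=n$. Injectivity uses the point $-s_0=\frac12$, while the isomorphism (\ref{equ pd-isom5}) lives at $s_0$. To close the argument we will rely on the dimension count from Theorem \ref{thm orth-symp} rather than on an extension at $s_0$.
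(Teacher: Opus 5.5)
Your proposal is correct and is exactly the argument the paper intends when it says the proof follows that of Theorem~\ref{thm transf1}: Frobenius reciprocity on the open orbits, extension by the base change doubling zeta integral at $-s_0$, nonvanishing of the restriction to $R_{\FV_\tau}(W_F)$ via the first injection in (\ref{equ inj4}), and, for parts (2)--(3), the dimension equality of Theorem~\ref{thm orth-symp}. One small slip: on the $V'_F$-component the translated section must be $h\mapsto\CF(g_{V'}h)$ rather than $\CF(hg_{V'})$, since only the former is left $\O(V'_F)$-invariant and gives an $\O(V)$-invariant form.
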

\begin{proof}
The proof follows the same lines as that of Theorem \ref{thm transf1}, with the obvious modifications. We omit the details.
\end{proof}

\subsection{Adjoint relation}\label{subsec adj rel}
Suppose that $\pi\in\Irr\left(\Sp(W)\right)$ is unitary. Then we have a natural identification $\pi^\vee=\overline{\pi}$, where $\overline{\pi}$ denotes the complex conjugate representation of $\pi$. For a linear form $\alpha:\pi\ra\BC$, we define a linear form $\overline{\alpha}:\pi^\vee\ra\BC$  by $\overline{\alpha}(v)=\overline{\alpha(v)}$. This yields an isomorphism $$\Hom_{\Sp(W_F)}\left(\pi,\BC\right)\stackrel{\simeq}{\lra}\Hom_{\Sp(W_F)}\left(\pi^\vee,\BC\right),\quad \alpha\mapsto\overline{\alpha}.$$ Consequently, whenever $T_{W_F}$ is well defined on $\Hom_{\Sp(W_F)}\left(\pi^\vee,\BC\right)$, we obtain a linear map
 $$\Hom_{\Sp(W_F)}\left(\pi,\BC\right)\lra\Hom_{\O(V_F)}\left(\Theta_{V,W,\psi_\tau}(\pi),\BC\right),\quad \alpha\mapsto T_{W_F}(\overline{\alpha}),$$ which we continue to denote  by $T_{W_F}$. 
 
 Similarly, if $\sigma\in\Irr(\O(V))$ is unitary, we  obtain a linear map
 $$T_{V_F}:\Hom_{\O(V_F)}\left(\sigma,\BC\right)\lra\Hom_{\Sp(W_F)}\left(\Theta_{V,W,\psi_\tau}(\sigma),\BC\right),$$ defined in the same formal manner as above.

Assume that $\pi\in\Irr\left(\Sp(W)\right)$ is square-integrable. In particular, $\pi$ is unitary and $\Sp(W_F)$-relatively square-integrable. There exists an inner product  $\pair{\textrm{-},\textrm{-}}_{X_{W_F}}$ on the space $\Hom_{\Sp(W_F)}\left(\pi,\BC\right)$, uniquely characterized by the identity
$$\int_{X_{W_F}}\alpha(\pi(g)v)\overline{\alpha'(\pi(g)v')}\ \d g=\pair{\alpha,\alpha'}_{X_{W_F}}\cdot\pair{v,v'}_\pi,$$ for all $\alpha,\alpha'\in \Hom_{\Sp(W_F)}\left(\pi,\BC\right)$ and all $v,v'\in\pi,$ where $\pair{\textrm{-},\textrm{-}}_\pi$ is a fixed inner product on $\pi$. Note that
$\pair{\textrm{-},\textrm{-}}_{X_{W_F}}$ depends on the choice of the Haar measure $\d g$ on $X_{W_F}$ and the inner product $\pair{\textrm{-},\textrm{-}}_\pi$.

Similarly, we have an inner product $\pair{\textrm{-},\textrm{-}}_{X_{V_F}}$ on $\Hom_{\O(V_F)}(\sigma,\BC)$ for  irreducible  square-integrable representation $\sigma$ of $\O(V)$.

\begin{thm}\label{thm adjoint} Suppose that $\pi\in\Irr(\Sp(W))$ is supercuspidal and that $\sigma=\Theta_{V,W,\psi_\tau}(\pi)$ is the first occurrence. Then there exists a non-zero constant  $c\in\BC^\times$ such that
$$\pair{T_{W_F}(\alpha),\beta}_{X_{V_F}}=c\pair{T_{V_F}(\beta),\alpha}_{X_{W_F}}$$ for all $\alpha\in\Hom_{\Sp(W_F)}\left(\pi,\BC\right)$ and $\beta\in\Hom_{\O(V_F)}\left(\sigma,\BC\right)$.
\end{thm}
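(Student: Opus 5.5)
We prove the adjoint relation by computing one and the same absolutely convergent triple integral in two orders, in the spirit of Gan's treatment of characters via the Weil representation.

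Fix $\phi\in C_c^\infty(\BY)$, $v\in\pi$ and $w\in\sigma$. The basic object is the matrix-coefficient pairing of $\omega:=\omega_{V,W,\psi_\tau}$ against $\pi\otimes\sigma$. Since $\pi$ is supercuspidal and $\sigma=\Theta_{V,W,\psi_\tau}(\pi)$ is the first occurrence, $\sigma$ is irreducible and supercuspidal by Kudla. The map $\phi\otimes v\mapsto\theta(\phi,v)\in\sigma$ can then be realized by the standard formula
$$\pair{\theta(\phi,\bar v),w}_\sigma=\int_{\Sp(W)}\pair{\omega(g)\phi,\phi'}\,\overline{\pair{\pi(g)v,v'}_\pi}\ \d g$$
(or a variant), because supercuspidal matrix coefficients are compactly supported. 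Rather than working with $\theta$ directly, I would use the characterization of $\pair{-,-}_{X_{V_F}}$:
$$\pair{T_{W_F}(\alpha),\beta}_{X_{V_F}}\pair{w_1,w_2}_\sigma=\int_{X_{V_F}}T_{W_F}(\alpha)(\sigma(h)w_1)\,\overline{\beta(\sigma(h)w_2)}\ \d h.$$
Take $w_1=\theta(\phi,\bar v)$. By $\O(V)$-equivariance of $\theta$, we have $\sigma(h)w_1=\theta(\omega(h)\phi,\bar v)$. Hence
$$T_{W_F}(\alpha)(\sigma(h)w_1)=\int_{X_{W_F}}p_{V_F}(\omega(h)\phi)(g)\,\overline{\alpha(\pi(g)v)}\ \d g.$$
Substituting, the left side becomes a double integral over $X_{V_F}\times X_{W_F}$ of
$$p_{V_F}(\omega(h)\phi)(g)\,\overline{\alpha(\pi(g)v)}\,\overline{\beta(\sigma(h)w_2)}.$$

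The key step is Lemma \ref{lem test-fcn-com}, which gives
$$p_{V_F}(\omega(h)\phi)(g)=q_{W_F}(\omega(g)\phi)(h).$$
After this substitution we swap the order of integration (Fubini), integrating first over $h\in X_{V_F}$. The inner integral is then $T_{V_F}(\bar\beta)(\omega(g)\phi,w_2)$, up to complex conjugation conventions. Dually, this equals $T_{V_F}(\beta)$ evaluated on $\Theta(\sigma)$, which by Howe duality and supercuspidality is $\pi$ itself (up to the $\tau$-twist and contragredient identifications). So the inner integral equals
$$T_{V_F}(\beta)(\pi(g)\theta'(\phi,w_2))$$
for the analogous theta map $\theta':\omega\otimes\sigma^\vee\to\pi$. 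Integrating over $X_{W_F}$ and applying the defining property of $\pair{-,-}_{X_{W_F}}$ yields
$$\pair{T_{V_F}(\beta),\alpha}_{X_{W_F}}\,\pair{\theta'(\phi,w_2),v}_\pi.$$

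Comparing the two sides, we get
$$\pair{T_{W_F}(\alpha),\beta}\,\pair{\theta(\phi,\bar v),w_2}_\sigma=\pair{T_{V_F}(\beta),\alpha}\,\pair{\theta'(\phi,w_2),v}_\pi.$$
Both factors $\pair{\theta(\phi,\bar v),w_2}$ and $\pair{\theta'(\phi,w_2),v}$ are $\Sp(W)\times\O(V)$-invariant trilinear forms on $\omega\otimes\pi^\vee\otimes\sigma^\vee$, up to conjugation. By Howe duality and the irreducibility of $\Theta(\pi)$, the space of such forms is one-dimensional, so the two factors are proportional with a constant $c$ independent of $\alpha,\beta$. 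Choosing $\phi,v,w_2$ with nonzero pairing gives the theorem; $c\neq0$ because both forms are nonzero.

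The main obstacle is justifying Fubini, i.e. proving absolute convergence of the double integral over $X_{V_F}\times X_{W_F}$. The plan for this step is as follows. The generalized matrix coefficients are compactly supported modulo the centers, since $\pi$ and $\sigma$ are relatively supercuspidal. Then $\pi(g)v$ ranges over a compact set in the $g$-variable. It therefore suffices to bound
$$\int\!\!\int |q_{W_F}(\omega(g)\phi)(h)\,\beta(\sigma(h)w_2)|\ \d h$$
with $g$ restricted to a compact modulo $\Sp(W_F)$. Smoothness lets us replace $\omega(g)\phi$ by finitely many vectors, so the $h$-integral is over a compact set and is finite. If the supports do not localize this cleanly, I would instead invoke Proposition \ref{prop zeta-conv} together with Lemma \ref{lem asym-sec} to bound the test functions by Schwartz-type estimates.
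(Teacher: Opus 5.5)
Your proposal is correct and is essentially the paper's proof. Both expand one side as an iterated integral over $X_{V_F}\times X_{W_F}$, swap the order using Lemma \ref{lem test-fcn-com} and the compact support of the generalized matrix coefficients, and finish with the proportionality $\pair{\theta(\phi,v),w}_\sigma=c\pair{\theta(\phi,w),v}_\pi$; the paper quotes this from \cite[Equation (5.9)]{gw21}, while you derive it correctly from $\dim\Hom_{\Sp(W)\times\O(V)}(\omega_{V,W,\psi_\tau},\pi\otimes\sigma)=1$. The ``standard formula'' for $\theta$ in your opening paragraph is garbled ($w$ does not appear on the right-hand side), but it is never used, so you should simply delete it.
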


\begin{proof}
Observe that $\sigma$ is irreducible and supercuspidal, and by the Howe duality we have $\pi=\Theta_{V,W,\psi_\tau}(\sigma)$. 

As before, for $\phi\in C_c^\infty(\BY)$ and $v\in\pi$, we let $\theta(\phi,v)$ denote the image of $\phi\otimes v\in\omega_{V,W,\psi_\tau}\otimes\overline{\pi}$ under the natural projection onto $\sigma=\Theta_{V,W,\psi_\tau}(\pi)$. In the same way, for $w\in\sigma$ we obtain an element $\theta(\phi,w)\in\pi$. According to \cite[Equation (5.9)]{gw21}, there exists a constant $c\in\BC^\times$ such that \begin{equation}\label{equ inner-equal}\pair{\theta(\phi,v),w}_\sigma=c\pair{\theta(\phi,w),v}_\pi\end{equation} holds for every $v\in\pi$, $w\in\sigma$, and $\phi\in C_c^\infty(\BY)$.

By definition, we have 
\begin{equation}\label{equ adjoint}\begin{aligned}
&\pair{T_{V_F}(\beta),\alpha}_{X_{W_F}}\cdot\pair{\theta(\phi,w),v}_\pi\\
=&\int_{X_{W_F}}T_{V_F}(\beta)\left(\pi(g)\theta(\phi,w)\right)\overline{\alpha(\pi(g)v)}\ \d g\\
=&\int_{X_{W_F}}\left(\int_{X_{V_F}} q_{W_F}\left(\omega_{V,W,\psi_\tau}(g)\phi\right)(h)\overline{\beta(\sigma(h)w)}\ \d h\right)\overline{\alpha(\pi(g)v)}\ \d g.
\end{aligned}\end{equation}

Since $\pi$ and $\sigma$ are supercuspidal, the local periods $\alpha$ and $\beta$ are relatively supercuspidal, and thus we may interchange the order of integration. Applying Lemma \ref{lem test-fcn-com}, the last expression in (\ref{equ adjoint}) is equal to
$$\begin{aligned}
&\int_{X_{V_F}}\left(\int_{X_{W_F}}p_{V_F}(\omega_{V,W,\psi_\tau}(h)\phi)(g)\overline{\alpha(\pi(g)v)}\ \d g\right)\overline{\beta(\sigma(h)w)}\ \d h\\
=&\int_{X_{V_F}}T_{W_F}(\alpha)\left(\sigma(h)\theta(\phi,v)\right)\overline{\beta(\sigma(h)w)}\ \d h\\
=&\pair{T_{W_F}(\alpha),\beta}_{X_{V_F}}\cdot\pair{\theta(\phi,v),w}_\sigma.
\end{aligned}$$ 
The desired identity now follows from (\ref{equ inner-equal}).
\end{proof}

\begin{rem}\label{rem c and abs}\begin{enumerate}
\item In fact, we can choose inner products $\pair{\textrm{-},\textrm{-}}_\pi$ and $\pair{\textrm{-},\textrm{-}}_\sigma$ such that $$c=1$$ in (\ref{equ inner-equal}). From now on, we shall always make such choices.
\item If $m=n$ or $n+1$, $\pi$ is square-integrable, and $\sigma=\Theta_{V,W,\psi_\tau}(\pi)$ is irreducible and square-integrable, we expect that the last integral in (\ref{equ adjoint}) is absolutely convergent. If this is the case, the assertion of  Theorem \ref{thm adjoint} also holds in this situation.
\end{enumerate}
\end{rem}

If both $\pi$ and $\sigma=\Theta_{V,W,\psi_\tau}(\pi)$ are irreducible and unitary, and if the maps $T_{W_F}$ and $T_{V_F}$ are well defined, we obtain linear endomorphisms
\begin{equation}\label{equ endo-sp}
\begin{aligned}
L_{W_F}&:=T_{V_F}\circ T_{W_F}\in\End\left(\Hom_{\Sp(W_F)}(\pi,\BC)\right),\\
L_{V_F}&:=T_{W_F}\circ T_{V_F}\in\End\left(\Hom_{\O(V_F)}(\sigma,\BC)\right).
\end{aligned}
\end{equation}

\begin{cor}\label{cor normal} Suppose that $\pi\in\Irr(\Sp(W))$ is supercuspidal and that $\sigma=\Theta_{V,W,\psi_\tau}(\pi)$ is the first occurrence.
Then $L_{W_F}$ and $L_{V_F}$ are normal operators.
\end{cor}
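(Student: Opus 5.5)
The plan is to derive normality directly from the adjoint relation of Theorem \ref{thm adjoint}, with the normalization $c=1$ fixed in Remark \ref{rem c and abs}(1). Write $A:=\Hom_{\Sp(W_F)}(\pi,\BC)$ and $B:=\Hom_{\O(V_F)}(\sigma,\BC)$. Both are finite-dimensional by Delorme's theorem. Both carry the inner products $\pair{\textrm{-},\textrm{-}}_{X_{W_F}}$ and $\pair{\textrm{-},\textrm{-}}_{X_{V_F}}$, which are well defined because $\pi$ and $\sigma$ are supercuspidal, hence square-integrable. Since $\sigma=\Theta_{V,W,\psi_\tau}(\pi)$ and, by Howe duality, $\pi=\Theta_{V,W,\psi_\tau}(\sigma)$, the maps $T_{W_F}:A\to B$ and $T_{V_F}:B\to A$ are well defined. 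Indeed, both periods are relatively supercuspidal, so the first bullet of Lemma \ref{lem well-def-transf} applies on each side.

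First, I will rewrite the identity of Theorem \ref{thm adjoint} as an adjointness statement. It reads $\pair{T_{W_F}\alpha,\beta}_{X_{V_F}}=\pair{T_{V_F}\beta,\alpha}_{X_{W_F}}=\overline{\pair{\alpha,T_{V_F}\beta}_{X_{W_F}}}$. Here I must check whether these inner products are linear or conjugate-linear in each slot, tracking the convention through the defining integral $\int\alpha(\pi(g)v)\overline{\alpha'(\pi(g)v')}\,\d g$. I must also account for the conjugation $\alpha\mapsto\overline\alpha$ that is built into the redefined $T_{W_F}$ and $T_{V_F}$.

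The key point is that both transfer maps are conjugate-linear once one passes through $\alpha\mapsto\overline{\alpha}$. So the relation should say that $T_{V_F}$ is the conjugate-linear adjoint of $T_{W_F}$, in the sense that $\pair{T_{W_F}\alpha,\beta}=\pair{T_{V_F}\beta,\alpha}$. For conjugate-linear maps this is exactly the right symmetric notion of adjointness.

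Second, I will compute $L_{W_F}=T_{V_F}T_{W_F}$. This operator is linear, as the composite of two conjugate-linear maps. Applying the relation twice gives
\[
\pair{L_{W_F}\alpha,\alpha'}_{X_{W_F}}=\pair{T_{V_F}(T_{W_F}\alpha),\alpha'}=\pair{T_{W_F}\alpha',T_{W_F}\alpha}_{X_{V_F}}.
\]
The same computation with $\alpha,\alpha'$ swapped shows that this expression is Hermitian-symmetric, so $L_{W_F}$ is self-adjoint and hence normal. Likewise $\pair{L_{V_F}\beta,\beta'}=\pair{T_{V_F}\beta',T_{V_F}\beta}_{X_{W_F}}$, so $L_{V_F}$ is self-adjoint, in fact positive semidefinite.

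If instead the conventions make $T_{W_F},T_{V_F}$ linear with $T_{V_F}=T_{W_F}^{*}$, then $L_{W_F}=T_{W_F}^{*}T_{W_F}$ and $L_{V_F}=T_{W_F}T_{W_F}^{*}$, which are again self-adjoint. Either way, normality follows. If the constant $c$ were not normalized to $1$, normality would still hold, because scalar multiples of self-adjoint operators are normal.

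The main obstacle is the bookkeeping of complex conjugations: whether $T$ is linear or conjugate-linear after the identification $\pi^\vee=\overline\pi$, and which slot of each inner product is conjugated. Getting this wrong could produce the meaningless identity $T_{V_F}=T_{W_F}^*$ between conjugate-linear maps. I would carefully write out $T_{W_F}(\overline{\alpha})$ on $\sigma$ via the defining integral (\ref{equ def-transf Sp}) and confirm the symmetric form used above before concluding.
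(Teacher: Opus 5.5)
Your argument is correct and is essentially the paper's proof. The paper likewise applies Theorem \ref{thm adjoint} with $c=1$ to get $\pair{L_{W_F}\alpha,\alpha'}_{X_{W_F}}=\pair{T_{W_F}\alpha',T_{W_F}\alpha}_{X_{V_F}}$, and then uses the Hermitian symmetry of this expression to conclude that $L_{W_F}$, and likewise $L_{V_F}$, is self-adjoint and hence normal. Your bookkeeping worry resolves as you expect: with the redefinition $\alpha\mapsto T_{W_F}(\overline{\alpha})$, both transfer maps are conjugate-linear and their composites are linear.
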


\begin{proof}
For any $\alpha,\alpha'\in\Hom_{\Sp(W_F)}(\pi,\BC)$, Theorem \ref{thm adjoint} together with the choice of inner products as in Remark \ref{rem c and abs} (so that $c=1$) yields
$$\begin{aligned}
\pair{L_{W_F}(\alpha),\alpha'}_{X_{W_F}}&=\pair{T_{W_F}(\alpha'),T_{W_F}(\alpha)}_{X_{W_F}}\\
&=\overline{\pair{T_{W_F}(\alpha),T_{W_F}(\alpha')}_{X_{W_F}}}\\
&=\overline{\pair{L_{W_F}(\alpha'),\alpha}_{X_{W_F}}}\\
&=\pair{\alpha,L_{W_F}(\alpha')}_{X_{W_F}}.
\end{aligned}$$ 
Hence $L_{W_F}$ is a normal operator. The same argument, applied to $L_{V_F}$, shows that it is also normal.
\end{proof}

\begin{rem}
\begin{enumerate}
\item Corollary \ref{cor normal} is a direct consequence of Theorem \ref{thm adjoint}. We conjecture that it remains valid under the conditions of part (2) of Remark \ref{rem c and abs}.
\item If $L_{W_F}$ is a normal operator, we may decompose the space $\Hom_{\Sp(W_F)}(\pi,\BC)$ into eigenspaces:
$$\Hom_{\Sp(W_F)}(\pi,\BC)=\bigoplus_\lambda\Hom_{\Sp(W_F)}(\pi,\BC)_\lambda,$$ where $\lambda$ runs over the eigenvalues of $L_{W_F}$ and the subscript denotes the corresponding eigenspace.  A natural question is to study the properties of these eigenvalues $\lambda$ and the representation-theoretic meaning of the dimensions $\dim\Hom_{\Sp(W_F)}(\pi,\BC)_\lambda$.
\end{enumerate}
\end{rem}

\begin{cor}\label{cor normal-isom}
Suppose that $T_{W_F}$ and $T_{V_F}$ are well defined and satisfy the adjoint relation of Theorem \ref{thm adjoint}.  If $T_{W_F}$ (resp. $T_{V_F}$) is injective, then $L_{W_F}$ (resp. $L_{V_F}$) is an automorphism. 
\end{cor}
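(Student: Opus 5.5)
The statement is finite-dimensional linear algebra. By Delorme's theorem, both $\Hom_{\Sp(W_F)}(\pi,\BC)$ and $\Hom_{\O(V_F)}(\sigma,\BC)$ are finite-dimensional. The pairings $\pair{\textrm{-},\textrm{-}}_{X_{W_F}}$ and $\pair{\textrm{-},\textrm{-}}_{X_{V_F}}$ are genuine positive definite inner products, so adjoints exist. Since $\pair{\alpha,\alpha}_{X_{W_F}}\pair{v,v}_\pi=\int|\alpha(\pi(g)v)|^2\,\d g$, the pairing vanishes only if $\alpha=0$.

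I would read the adjoint relation of Theorem \ref{thm adjoint} (with $c=1$, as in Remark \ref{rem c and abs}) as saying that $T_{V_F}$ equals the Hilbert-space adjoint $T_{W_F}^*$, up to the conjugate-linear identifications $\alpha\mapsto\overline\alpha$ built into the definitions. Any nonzero $c$ can be absorbed as a scalar, so it does no harm. The precise identity I would record is
$$\pair{T_{W_F}(\alpha),\beta}_{X_{V_F}}=\pair{\alpha,T_{V_F}(\beta)}_{X_{W_F}}$$
up to complex conjugation, as in the computation in the proof of Corollary \ref{cor normal}.

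Suppose $T_{W_F}$ is injective. Then for $\alpha$ in the kernel of $L_{W_F}=T_{V_F}\circ T_{W_F}$, I would compute
$$0=\pair{L_{W_F}(\alpha),\alpha}_{X_{W_F}}=\pair{T_{V_F}(T_{W_F}\alpha),\alpha}_{X_{W_F}}=\pair{T_{W_F}(\alpha),T_{W_F}(\alpha)}_{X_{V_F}},$$
possibly after conjugation, using the adjoint relation with $\beta=T_{W_F}(\alpha)$. Positive definiteness gives $T_{W_F}(\alpha)=0$, and injectivity then gives $\alpha=0$. So $L_{W_F}$ is an injective endomorphism of a finite-dimensional space, hence an automorphism.

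For $T_{V_F}$ injective, I would run the same argument with the roles swapped: $L_{V_F}(\beta)=0$ forces $\pair{T_{V_F}\beta,T_{V_F}\beta}_{X_{W_F}}=0$, so $T_{V_F}\beta=0$ and then $\beta=0$.

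The only delicate point is the linearity bookkeeping. The maps $T_{W_F},T_{V_F}$ on the $\Hom(\pi,\BC)$ spaces are defined via $\alpha\mapsto T_{W_F}(\overline\alpha)$, so one must check that the identity applied is exactly $\pair{T\alpha,\beta}=\pair{T^*\beta,\alpha}$ in the form used in Corollary \ref{cor normal}, and that the constant $c$ does not spoil positivity. I would handle this by taking $\beta=T_{W_F}(\alpha)$ directly in the relation. Even if $c\neq1$, this gives $c\cdot\pair{L\alpha,\alpha}=\norm{T\alpha}^2$ (or its conjugate), which suffices because $c\neq0$.
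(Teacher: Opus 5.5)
Your proposal is correct and follows the paper's argument. You substitute $\beta=T_{W_F}(\alpha)$ into the adjoint relation, so that $\pair{L_{W_F}(\alpha),\alpha}_{X_{W_F}}$ equals $\pair{T_{W_F}(\alpha),T_{W_F}(\alpha)}_{X_{V_F}}$ up to the nonzero constant $c$. Then $L_{W_F}$ is injective by positive-definiteness and the injectivity of $T_{W_F}$, and it is bijective by finite-dimensionality. You are slightly more careful than the paper: you make Delorme's finiteness explicit, and you place that inner product in $X_{V_F}$, where the paper's proof writes $X_{W_F}$ by a slip.
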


\begin{proof}
Assume that $T_{W_F}$ is injective. For any non-zero $\alpha\in\Hom_{\Sp(W_F)}(\pi,\BC)$, we have
$$\pair{L_{W_F}(\alpha),\alpha}_{X_{W_F}}=\pair{T_{W_F}(\alpha),T_{W_F}(\alpha)}_{X_{W_F}}\neq 0.$$ Thus $L_{W_F}$ is injective, and consequently an automorphism. The same reasoning, applied to $T_{V_F}$, shows that $L_{V_F}$ is also an automorphism whenever $T_{V_F}$ is injective.
\end{proof}

\subsection{Relative character relation}\label{subsec rel-char}
We begin by recalling the definition of relative characters.

Let $X=H\bs G$ be a symmetric space and $\pi\in\Irr(G)$ a unitary representation. As usual, we identify the contragredient $\pi^\vee$ with the complex conjugate $\overline{\pi}$.
For $f\in C_c^\infty(X)$ and $\alpha\in\Hom_{H}(\pi,\BC)$, we define an element 
$\pi(f)\alpha\in\overline{\pi}^*$ by
\begin{equation}\label{equ dual-vector}\left(\pi(f)\alpha\right)(v)=\int_{X}f(g)\overline{\alpha(\pi(g)v)}\ \d g,\quad v\in\overline{\pi},\end{equation} where $\overline{\pi}^*$ denotes the linear dual of $\overline{\pi}$. Moreover, since $f$ is right $J$-invariant for some compact open subgroup $J\subset G$, one has
\begin{equation}\label{equ smooth-vector}\pi(f)\alpha\in(\overline{\pi}^*)^J\subset\pi.\end{equation} 

\begin{defn} For a unitary representation $\pi\in\Irr(G)$ and local periods $\alpha, \alpha'\in\Hom_{H}(\pi,\BC)$, we define the
 \emph{relative character} of $\pi$ associated to $\alpha, \alpha'$ as the distribution
\begin{equation}\label{def rel-char}\Phi_{\pi,\alpha,\alpha'}:C_c^\infty(X)\lra\BC,\quad f\mapsto\alpha'(\pi(f)\alpha).\end{equation} 
\end{defn}

Let $\ONB(\pi)$ be an orthonormal basis of $\pi$. Then the relative character can be expressed as
\begin{equation}\label{equ rel-char-basis}\begin{aligned}
\Phi_{\pi,\alpha,\alpha'}(f)&=\sum_{v\in\ONB(\pi)}(\pi(f)\alpha)(v)\alpha'(v)\\
&=\sum_{v\in\ONB(\pi)}\left(\int_Xf(g)\overline{\alpha(\pi(g)v)}\ \d g \right)\alpha'(v).
\end{aligned}
\end{equation}

In the literature, relative characters are initially defined for test functions in $C_c^\infty(X)$ in a slightly different way. Under suitable hypotheses on $\pi$, however, the domain can be extended to larger function spaces. More precisely,  the integral \eqref{equ dual-vector} remains absolutely convergent, and hence $\pi(f)\alpha$ is well defined, in the following situations:
\begin{itemize}
\item $\pi$ is tempered and $f\in\CC(X)$,
\item $\pi$ is square-integrable and $f\in\CC_\temp(X)$,
\item $\pi$ is supercuspidal and $f\in C_\sm(X)$.
\end{itemize}
Moreover, since $f\in C_\sm(X)$ in all three cases, one has $\pi(f)\alpha\in\pi$. Therefore, the relative character can be extended to these larger spaces by the same formula as in (\ref{def rel-char}):
\begin{itemize}
\item $\Phi_{\pi,\alpha,\alpha'}: \CC(X)\ra\BC$ if $\pi$ is tempered;
\item  $\Phi_{\pi,\alpha,\alpha'}: \CC_\temp(X)\ra\BC$ if $\pi$ is square-integrable;
\item  $\Phi_{\pi,\alpha,\alpha'}: C_\sm(X)\ra\BC$ if $\pi$ is supercuspidal.
\end{itemize} 

\begin{thm}\label{thm rel-char}
Suppose that $\pi\in\Irr(\Sp(W))$ is supercuspidal and that $\sigma=\Theta_{V,W,\psi_\tau}(\pi)$ is the first occurrence. Let $f\in \CS(X_{W_F})$ and $f'\in\CS(X_{V_F})$ be test functions that correspond to each other in the sense of Definition \ref{def function}.
 Then, for all $\alpha\in\Hom_{\Sp(W_F)}\left(\pi,\BC\right)$ and  $\beta\in\Hom_{\O(V_F)}\left(\sigma,\BC\right)$, we have
$$\Phi_{\pi,\alpha,T_{V_F}(\beta)}(f)=\Phi_{\sigma,\beta,T_{W_F}(\alpha)}(f').$$
\end{thm}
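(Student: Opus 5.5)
Fix $\phi\in C_c^\infty(\BY)$ with $f=p_{V_F}(\phi)$ and $f'=q_{W_F}(\phi)$. The plan is to expand both relative characters as sums over orthonormal bases, using formula \eqref{equ rel-char-basis}. We then rewrite each matrix coefficient of a test function against a period as a value of the transfer operators, and finally match the two sides through the theta lifting maps $\theta(\phi,\textrm{-})$ and the adjunction identity \eqref{equ inner-equal} with $c=1$ (Remark \ref{rem c and abs}). Since $\pi$ and $\sigma$ are supercuspidal (Kudla) and the periods are relatively supercuspidal, all integrals over $X_{W_F}$ and $X_{V_F}$ converge absolutely for $f,f'\in C_\sm$. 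Hence $\Phi_{\pi,\alpha,\alpha'}$ and $\Phi_{\sigma,\beta,\beta'}$ are defined on $\CS(X_{W_F})$ and $\CS(X_{V_F})$.

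\textbf{Step 1: the left-hand side.} Put $\alpha'=T_{V_F}(\beta)$. By definition,
\[
(\pi(f)\alpha)(v)=\int_{X_{W_F}}p_{V_F}(\phi)(g)\,\overline{\alpha(\pi(g)v)}\,\d g .
\]
Comparing with \eqref{equ def-transf Sp}, where $\overline\alpha$ is used as the period on $\pi^\vee=\overline\pi$, this equals $T_{W_F}(\alpha)(\theta(\phi,v))$ up to the harmless conjugation conventions. Thus the vector $\pi(f)\alpha\in\pi$ is characterized by
\[
\langle \pi(f)\alpha, v\rangle_\pi=T_{W_F}(\alpha)(\theta(\phi,v))
\]
in the appropriate (anti)linear sense. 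Then
\[
\Phi_{\pi,\alpha,T_{V_F}(\beta)}(f)=T_{V_F}(\beta)(\pi(f)\alpha).
\]
Symmetrically, $\sigma(f')\beta\in\sigma$ is characterized by $w\mapsto T_{V_F}(\beta)(\theta(\phi,w))$, and
\[
\Phi_{\sigma,\beta,T_{W_F}(\alpha)}(f')=T_{W_F}(\alpha)(\sigma(f')\beta).
\]

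\textbf{Step 2: identify the vectors.} I would show $\pi(f)\alpha=\theta(\phi,x)$ for a suitable $x\in\sigma$, which is plausible when the relevant functional is represented by a single vector. Better is to expand in orthonormal bases. The left side becomes
\[
\sum_{v\in\ONB(\pi)}T_{W_F}(\alpha)(\theta(\phi,v))\,T_{V_F}(\beta)(v),
\]
with appropriate conjugations. Since $T_{W_F}(\alpha)$ is a linear form on $\sigma$, insert $\sum_{w\in\ONB(\sigma)}\langle\theta(\phi,v),w\rangle_\sigma\,T_{W_F}(\alpha)(w)$. Then apply \eqref{equ inner-equal} to convert $\langle\theta(\phi,v),w\rangle_\sigma$ into $\langle\theta(\phi,w),v\rangle_\pi$, and resum over $v$ to get
\[
\sum_{v}\langle\theta(\phi,w),v\rangle_\pi\,T_{V_F}(\beta)(v)=T_{V_F}(\beta)(\theta(\phi,w)).
\]
This yields
\[
\sum_{w\in\ONB(\sigma)}T_{V_F}(\beta)(\theta(\phi,w))\,T_{W_F}(\alpha)(w),
\]
which is exactly the right side expanded by Step 1 applied to $\sigma$.

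\textbf{Main obstacle.} The main difficulty is justifying the interchanges of sums. Since $\phi$ is fixed by some compact open $J$, the vectors $\theta(\phi,v)$ depend only on the $J$-fixed projections, and $\pi^J,\sigma^J$ are finite-dimensional. So I would choose orthonormal bases adapted to $J$-types, which makes all sums finite. A second, bookkeeping difficulty is tracking complex conjugations and the identification $\pi^\vee=\overline\pi$ consistently, so that \eqref{equ inner-equal} is applied with the correct linearity and $c=1$.
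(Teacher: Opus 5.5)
Your proposal is correct and takes essentially the same route as the paper. Like the paper, you fix $\phi$ with $f=p_{V_F}(\phi)$ and $f'=q_{W_F}(\phi)$, expand both relative characters over orthonormal bases, recognize the inner integrals as $T_{W_F}(\alpha)(\theta(\phi,v))$ and $T_{V_F}(\beta)(\theta(\phi,w))$, and match the resulting double sums via \eqref{equ inner-equal} with $c=1$. Your extra observation is useful and is left implicit in the paper: $\theta(\phi,v)$ and $\theta(\phi,w)$ only involve the finite-dimensional spaces of vectors fixed by a compact open subgroup fixing $\phi$, so all the sums are finite for adapted bases.
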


\begin{proof}  
Pick $\phi\in C_c^\infty(\BY)$ such that $f=p_{V_F}(\phi)$ and $f'=q_{W_F}(\phi)$.
Then $$\begin{aligned}
\Phi_{\pi,\alpha,T_{V_F}(\beta)}(f)&=\sum_{v\in\ONB(\pi)}\left(\int_{X_{W_F}}p_{V_F}(\phi)(g)\overline{\alpha(\pi(g)v)}\ \d g \right) T_{V_F}(\beta)(v)\\
&=\sum_{v\in\ONB(\pi)}T_{W_F}(\alpha)(\theta(\phi,v))\cdot T_{V_F}(\beta)(v)\\
&=\sum_{v\in\ONB(\pi)}\sum_{w\in\ONB(\sigma)}\pair{\theta(\phi,v),w}_\sigma\cdot T_{W_F}(\alpha)(w)\cdot T_{V_F}(\beta)(v).
\end{aligned}$$
On the other hand, 
$$\begin{aligned}
\Phi_{\sigma,\beta,T_{W_F}(\alpha)}(f')&=\sum_{w\in\ONB(\sigma)}\left(\int_{X_{V_F}}q_{W_F}(\phi)(h)\overline{\beta(\sigma(h)w)}\ \d h \right) T_{W_F}(\alpha)(w)\\
&=\sum_{w\in\ONB(\sigma)}T_{V_F}(\beta)(\theta(\phi,w))\cdot T_{W_F}(\alpha)(w)\\
&=\sum_{w\in\ONB(\sigma)}\sum_{v\in\ONB(\pi)}\pair{\theta(\phi,w),v}_\pi\cdot T_{V_F}(\beta)(v)\cdot T_{W_F}(\alpha)(w).
\end{aligned}$$
The desired identity now follows from \eqref{equ inner-equal}.
\end{proof}

\begin{cor}
We retain the assumptions of Theorem \ref{thm rel-char}. Let $\alpha$ be an eigenvector of the normal operator $L_{W_F}$ with eigenvalue $\lambda$, and  set $\beta=T_{W_F}(\alpha)$. Then
$$\Phi_{\sigma,\beta,\beta}(f')=\lambda\Phi_{\pi,\alpha,\alpha}(f).$$
\end{cor}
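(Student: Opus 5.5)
The plan is to apply Theorem \ref{thm rel-char} with the specific choice $\beta=T_{W_F}(\alpha)$ and then use the eigenvector hypothesis. With this choice, the left-hand side of Theorem \ref{thm rel-char} becomes $\Phi_{\pi,\alpha,T_{V_F}(T_{W_F}(\alpha))}(f)=\Phi_{\pi,\alpha,L_{W_F}(\alpha)}(f)$, by the definition \eqref{equ endo-sp} of $L_{W_F}$. The right-hand side becomes $\Phi_{\sigma,\beta,T_{W_F}(\alpha)}(f')=\Phi_{\sigma,\beta,\beta}(f')$. So the corollary reduces to showing $\Phi_{\pi,\alpha,L_{W_F}(\alpha)}(f)=\lambda\,\Phi_{\pi,\alpha,\alpha}(f)$.

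First I check that $L_{W_F}$ is defined, so that Corollary \ref{cor normal} applies. Since $\pi$ is supercuspidal and $\sigma$ is the first occurrence, Kudla's result shows that $\sigma$ is irreducible and supercuspidal. Hence every period is relatively supercuspidal, and by Lemma \ref{lem well-def-transf} both $T_{W_F}$ and $T_{V_F}$ are well defined.

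By hypothesis $L_{W_F}(\alpha)=\lambda\alpha$. The relative character $\Phi_{\pi,\alpha,\alpha'}(f)=\alpha'(\pi(f)\alpha)$ is linear in its second period slot $\alpha'$, because $\pi(f)\alpha\in\pi$ does not depend on $\alpha'$. Therefore $\Phi_{\pi,\alpha,\lambda\alpha}(f)=\lambda\,\Phi_{\pi,\alpha,\alpha}(f)$, which gives the claim.

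The only point needing care is conventions: $T_{W_F}$ and $T_{V_F}$ were transported through the conjugation $\alpha\mapsto\overline{\alpha}$ in Sect. \ref{subsec adj rel}. I need to confirm that the composite $L_{W_F}$ acting on $\Hom_{\Sp(W_F)}(\pi,\BC)$ is complex-linear and matches the objects appearing in Theorem \ref{thm rel-char}. The two conjugations in the composite cancel, so $L_{W_F}$ is indeed linear. I also need to confirm that $\alpha'$ enters \eqref{def rel-char} linearly rather than conjugate-linearly, which it does by inspection. With these checks made, no eigenvalue conjugation appears, and the stated identity $\Phi_{\sigma,\beta,\beta}(f')=\lambda\Phi_{\pi,\alpha,\alpha}(f)$ follows.
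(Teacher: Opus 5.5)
Your proposal is correct and is the paper's argument: the paper's proof is the one line ``this follows directly from Theorem \ref{thm rel-char}.'' You make it explicit by specializing to $\beta=T_{W_F}(\alpha)$, so that $T_{V_F}(\beta)=L_{W_F}(\alpha)=\lambda\alpha$, and then using that $\Phi_{\pi,\alpha,\alpha'}(f)$ is linear in $\alpha'$. Your side checks, namely well-definedness of both transfers and that the two conjugate-linear maps $T_{W_F}$, $T_{V_F}$ compose to a linear $L_{W_F}$, are accurate and only spell out what the paper leaves implicit.
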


\begin{proof}
This follows directly from Theorem \ref{thm rel-char}.
\end{proof}

\s{\small Chong Zhang\\
School of Mathematics, Nanjing University,\\
Nanjing 210093, Jiangsu, P. R. China.\\
E-mail address: \texttt{zhangchong@nju.edu.cn}}

\end{document}